\algrenewcommand\alglinenumber[1]{\scriptsize$\triangleright$}
\definecolor{codegreen}{rgb}{0,0.6,0}
\definecolor{codegray}{rgb}{0.5,0.5,0.5}
\definecolor{codepurple}{rgb}{0.58,0,0.82}
\definecolor{backcolour}{rgb}{0.95,0.95,0.92}
\lstdefinestyle{SAGEMATHstyle}{
    backgroundcolor=\color{backcolour},   
    commentstyle=\color{codegreen},
    keywordstyle=\color{blue},
    numberstyle=\tiny\color{codegray},
    stringstyle=\color{codepurple},
    basicstyle=\ttfamily\footnotesize,
    breakatwhitespace=false,         
    breaklines=true,                 
    captionpos=b,                    
    keepspaces=true,                 
    numbers=none,                     
    numbersep=5pt,                  
    showspaces=false,                
    showstringspaces=false,
    showtabs=false,                  
    tabsize=2
}
\newtheorem{theorem}{Theorem}[section]
\newtheorem{lemma}[theorem]{Lemma}
\newtheorem{proposition}[theorem]{Proposition}
\newtheorem{corollary}[theorem]{Corollary}
\theoremstyle{definition}
\newtheorem{definition}[theorem]{Definition}
\newtheorem{example}[theorem]{Example}
\newtheorem{note}[theorem]{Note}
\theoremstyle{remark}
\newtheorem{remark}[theorem]{Remark}
\newtheorem*{acknowledgment}{Acknowledgment}
\newcommand{\Z}{\mathbb{Z}}
\newcommand{\A}{\mathcal{A}}
\newcommand{\Ext}{\text{Ext}}
\newcommand{\Sq}{\text{Sq}}
\newcommand{\PK}{\mathcal{P}_k}
\newcommand{\PKd}{(\mathcal{P}_k)_d}
\newcommand{\PKzero}{\mathcal{P}_k^0}
\newcommand{\PKplus}{\mathcal{P}_k^+}
\newcommand{\QPK}{Q\mathcal{P}_k}
\newcommand{\QPKd}{(Q\mathcal{P}_k)_d}
\newcommand{\QPKzero}{Q\mathcal{P}_k^0}
\newcommand{\QPKplus}{Q\mathcal{P}_k^+}
\newcommand{\Aplus}{\overline{\mathcal{A}}}
\newcommand{\Ztwo}{\mathbb{Z}/2}
\newcommand{\QPKomega}{Q\mathcal{P}_k(\omega)}
\newcommand{\GLK}{G_k}
\newcommand{\SigmaK}{\Sigma_k}
\definecolor{stepcolor}{RGB}{0,102,204}
\definecolor{outputcolor}{RGB}{153,0,153}
\definecolor{theoremcolor}{RGB}{0,128,0}
\newenvironment{stepbox}[1]{
\begin{framed}
\textbf{\color{stepcolor}#1}
\vspace{0.2cm}
}{\end{framed}}
\newenvironment{outputbox}{
\begin{framed}
\textbf{\color{outputcolor}Computational Output:}
\vspace{0.2cm}
}{\end{framed}}
\title[Computational Approaches to the Singer Transfer]{Computational Approaches to the Singer Transfer: Preimages in the Lambda Algebra\\ and $G_k$-Invariant Theory}
\author{Phuc Vo Dang$^{*}$}
\address{Department of AI, FPT University, Quy Nhon AI Campus\\
An Phu Thinh New Urban Area, Quy Nhon City, Binh Dinh, Vietnam}
\email{dangphuc150488@gmail.com}
\begin{document}

\maketitle

\begin{abstract}
This work revisits the algebraic transfer, a crucial homomorphism in algebraic topology introduced by W.M. Singer. We focus on its representation within the framework of the lambda algebra, particularly leveraging the explicit, invariant-theoretic formula developed by P.H. Chon and L.M. Ha \cite{ChonHaCRASS2011}. From this foundation, we derive both recursive and explicit formulas for the transfer in low ranks. The central contribution of this work is the formulation of a systematic, algorithmic method for computing the preimage of elements under this transfer. This framework is essential for addressing the fundamental question of whether specific cohomology classes in the Adams spectral sequence, $\Ext_{\A}^{s,t}(\Z/{2},\Z/{2})$, lie within the image of the Singer transfer. We formalize this preimage search as a solvable problem in linear algebra and  illustrate its application to important, well-known cohomology classes. As a consequence, we show that the proof in Nguyen Sum's paper \cite{Sum}, which asserts that the indecomposable element $d_0 \in \Ext_{\A}^{4,18}(\mathbb{Z}/2, \mathbb{Z}/2)$ lies in the image of the fourth Singer algebraic transfer, is false. 

Additionally, we explicitly describe the structure of a preimage under the Singer transfer of the indecomposable element $p_0 \in \Ext_{\A}^{4,37}(\mathbb{Z}/2, \mathbb{Z}/2)$. This preimage had not been explicitly determined in the previous work of N.H.V. Hung and V.T.N. Quynh \cite{HQ}. The present work can be seen as a continuation of our recent paper~\cite{Phuc}, as part of an ongoing research project on the Peterson hit problem and its applications via an algorithmic approach.

We have also constructed an algorithm in \textsc{SageMath} that produces explicit output for the dimension and a basis of the $G_k$-invariant space \([(Q\mathcal{P}_k)_d]^{G_k}\), for arbitrary values of \(k\) and \(d\), provided they lie within the memory limits of the executing machine. This algorithm is used to verify the manually computed results in our previous works~\cite{Phuc1, Phuc2, Phuc3}, which addressed the solution to Singer's conjecture on the injectivity of the algebraic transfer for rank 4.

\end{abstract}

\bigskip

\noindent \textbf{Keywords:} Steenrod algebra, Peterson hit problem, Lambda algebra, Algebraic transfer, \textsc{SageMath}

\medskip

\noindent \textbf{MSC (2020):} 55T15, 55S10, 55S05

\section{Introduction}

The stable homotopy groups of spheres, $\pi_*^S(\mathbb S^0)$, represent one of the most central and challenging objects of study in algebraic topology. The primary computational tool for approaching these groups is the Adams spectral sequence, whose $E_2$-term is given by the cohomology of the mod-2 Steenrod algebra $\A$, denoted $\Ext_{\A}^{k,t}(\Z/{2},\Z/{2})$ \cite{Adams1958}. Consequently, a deep understanding of the structure of $\Ext_{\A}^{*,*}(\Z/{2},\Z/{2})$ is of fundamental importance.

In a series of foundational papers, W. M. Singer introduced the algebraic transfer, an algebraic homomorphism that provides a powerful tool for constructing and analyzing elements in $\Ext_{\A}^{*,*}(\Z/{2},\Z/{2})$ \cite{Singer1989}. The Singer transfer, $\varphi_k,$ is a homomorphism from the space of coinvariants of $\A$-primitive elements in the homology of the classifying space of the elementary abelian 2-group $(\Z/2)^k$, to the Ext groups \cite{ChonHaTA2014}:
\[
\varphi_k: [P_{\A}H_*(B(\Z/2)^k)]_{G_k} \rightarrow \Ext_{\A}^{k,k+*}(\Z/2, \Z/2).
\]
Here, $G_k$ is the general linear group $G_k(\Z/2)$, and $P_{\A}H_*(B(\Z/2)^k)$ is the subspace of elements annihilated by positive degree Steenrod squares. The Singer transfer has been studied by many authors from various perspectives (see e.g., Boardman \cite{Boardman},  Bruner, Ha and Hung \cite{BHH}, Chon and Ha \cite{ChonHaCRASS2011, ChonHaTA2014}, Ha \cite{Ha}, Hung and Quynh \cite{HQ}, Hung and Powell \cite{HP}, the present author \cite{Phuc1, Phuc2, Phuc3}, Sum \cite{Sum}, Tin \cite{Tin}, Walker and Wood \cite{Walker-Wood2}, etc). This construction connects the intricate world of $\Ext_{\A}$ to the``\textit{hit problem}'' of F. P. Peterson and the modular representation theory of general linear groups, suggesting that tools from the latter can illuminate the structure of the former \cite{ChonHaTA2014}.

The study of the transfer is often more tractable in a different algebraic setting: the Lambda algebra, $\Lambda$, introduced by Bousfield, Curtis, Kan, et al. \cite{Bousfield1966}. The lambda algebra provides an alternative chain complex for computing $\Ext_{\A}$ \cite{ChonHaCRASS2011}. Singer's transfer can be realized at this chain level as a map $\varphi_k: H_*(B(\Z/2)^k) \to \Lambda_k$, where $\Lambda_k$ is the part of the lambda algebra of length $k.$ Nevertheless, Singer's original construction was not easily applicable for direct computations. A significant advancement was made by Chon and Ha \cite{ChonHaCRASS2011}, who provided a direct, invariant-theoretic description of the map $\varphi_k.$ Their formula gives a concrete, computational bridge between the homology of $B(\Z/2)^k$ and the lambda algebra.

The present work focuses on a critical question that arises from their work: \textbf{the preimage problem}. Given an element $y \in \Lambda_s$ that represents a non-zero class in $\Ext_{\A}$, can we find a corresponding element $x \in H_*(B(\Z/2)^k)$ such that $[\varphi_k(x)] = [y]$?

The importance of this question is paramount. If such a solution $x$ exists and, crucially, is $\A$-annihilated (i.e., it is a primitive element, $x \in P_{\A}H_*(B(\Z/2)^k)$), it proves that the corresponding cohomology class is in the image of the Singer transfer. Conversely, if no such $\A$-annihilated preimage exists, the class is not detected by the transfer. Determining which elements are in the image is a major goal in the field, as it sheds light on the origins and structure of elements in the Adams spectral sequence.

\subsection*{Organization of the paper}
This paper is structured as follows. In Section \ref{s2}, we review the necessary background on the Lambda algebra and the Singer transfer, establishing the recursive formula essential for our computations. Section \ref{s3} presents the main theoretical framework of this work. We formulate the preimage problem as a solvable system of linear equations and apply it to analyze the  known cohomology classes $c_0$, $d_0$, and $p_0$ in $\Ext_{\mathcal A}.$ As a consequence, the proof of the claim that $d_0 \in \operatorname{Im}(\varphi_4)$ given in Nguyen Sum's paper~\cite{Sum} is shown to be incorrect. Additionally, we explicitly describe the structure of a preimage under the Singer transfer of the indecomposable element $p_0.$ This preimage had not been explicitly determined in the previous work of Hung and Quynh \cite{HQ}. In Section \ref{s4}, we connect our work to the Singer conjecture by describing a comprehensive algorithmic framework for computing the associated spaces of $G_k$-invariants. Finally, a detailed appendix provides the full computational workflow of our algorithm described in Section~\ref{s4}, including a complete sample run for the case $k = 4$, $d = 33$, to ensure that all results are fully verifiable and reproducible.

\medskip

\begin{note}

Additionally, in this paper, we make publicly available our complete \textsc{SageMath} code (see Appendix~\ref{pl}). The code has been fully optimized to the best of our ability, spans 30 pages, and serves two main purposes:

\begin{itemize}
  \item[(1)] To fully automate the explicit computation of the domain of the Singer transfer and its dual, a process that has long been handled manually and is therefore highly prone to human error. While our algorithm is functional and effective, we acknowledge that further improvements are possible in terms of computational efficiency and optimization.
  
  \item[(2)] To provide a tool for verifying the computational results in our previous works~\cite{Phuc1, Phuc2, Phuc3}, which are related to the solution of Singer's conjecture in the rank 4 case.
\end{itemize}

\end{note}


\section{The Singer algebraic transfer and the Lambda algebra}\label{s2}

Let us recall that the Lambda algebra $\Lambda$ is a bigraded differential algebra over the field $\mathbb Z/2$ generated by the monomials $\lambda_t,\, t\geq 0,$ with the Adem relations:
\begin{equation}\label{ct1}
\lambda_k \lambda_{\ell} = \sum_{t \ge 0} \binom{t-\ell-1}{2t-k} \lambda_{k+\ell-t} \lambda_{\ell} \quad \text{for any } k, \ell \ge 0,
\end{equation}
and the differential
\begin{equation}\label{ct2}
 \delta(\lambda_k) = \sum_{t\geq 0}\binom{k-1-t}{t+1}\lambda_{k-1-t}\lambda_t.
\end{equation}

Denote by $\Lambda_k$ denotes the vector subspace of $\Lambda$ spanned by monomials of length $k$ in $\lambda_t$. This space has a basis consisting of all \textbf{admissible monomials} of the form $\lambda_I = \lambda_{i_1} \lambda_{i_2} \cdots \lambda_{i_k}$, where $i_j \leq 2i_{j+1}$ for all $1 \leq j \leq k - 1.$ 

\begin{definition}[\cite{ChonHaCRASS2011}.]
The \textbf{Singer algebraic transfer} is a homomorphism $\varphi_k: H_*(B(\Z/2)^k) \to \Lambda_k$, where $H_*(B(\Z/2)^k) = \Gamma[a_k, \dots, a_1]$ is the divided power algebra on $k$ generators. 
\end{definition}

Chon and Ha \cite{ChonHaCRASS2011} provided a powerful, invariant-theoretic formula for this map.

\begin{theorem}[\cite{ChonHaCRASS2011}]\label{dlCH}
\label{thm:chonha}
The representation $\varphi_{k}$ for the Singer transfer is given in terms of a generating function correspondence: $\varphi_{k}: a[x_{k},x_{k-1},\ldots ,x_{1}] \longrightarrow \lambda[v_{1},v_{2}, \ldots, v_{k}].$ That is, the transfer $\varphi_{k}$ sends an element $z=a^{(I)}\in H_{*}(B(\Z/2)^{k})$ to the sum of all $\lambda_{J}\in\Lambda_{k}$ such that $x^{I}$ is a non-trivial summand in the expansion of $v^{J}$ in the variables $x_{1}, \ldots,x_{k}.$ This can be written as: $\varphi_s(z) = \sum_{J} \langle z, v^J \rangle \lambda_J,$ where $\langle \cdot, \cdot \rangle$ denotes the coefficient pairing.
\end{theorem}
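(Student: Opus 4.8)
The formula is due to Chon and Ha, so the aim is to reconstruct a proof; the plan is to proceed by induction on the length $k$, comparing Singer's original chain-level construction of $\varphi_k$ with the combinatorial coefficient-extraction rule in the statement. First I would install a generating-function bookkeeping device on both sides. On the homology side, an element of $H_*(B(\Z/2)^k)=\Gamma[a_k,\dots,a_1]$ is recorded by its divided-power series $a[x_k,\dots,x_1]=\prod_{i=1}^{k}\bigl(\sum_{d\ge 0}a_i^{(d)}x_i^{d}\bigr)$, so that the coefficient pairing $\langle a^{(I)},x^{I'}\rangle=\delta_{I,I'}$ becomes literal extraction of a monomial from $H^*(B(\Z/2)^k)=\Z/2[x_1,\dots,x_k]$; on the lambda side one records the companion series $\lambda[v_1,\dots,v_k]$ in the degree-one forms $v_i$ of the Chon--Ha construction. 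The theorem then becomes the single assertion that $\varphi_k$ carries $a[x_k,\dots,x_1]$ to $\lambda[v_1,\dots,v_k]$ under the substitution $x_i\mapsto v_i$, and reading this off degree by degree returns exactly $\varphi_k(z)=\sum_J\langle z,v^J\rangle\lambda_J$.

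Second, the base case $k=1$: here $\Lambda_1$ has basis $\{\lambda_d\}_{d\ge 0}$, the single variable is $v_1=x_1$, and Singer's definition gives $\varphi_1(a_1^{(d)})=\lambda_d=\sum_j\langle a_1^{(d)},x_1^{j}\rangle\lambda_j$, so the formula holds by inspection. Third --- the heart of the argument --- the inductive step. I would use two compatible decompositions: the K\"unneth splitting $H_*(B(\Z/2)^k)=H_*(B\Z/2)\otimes H_*(B(\Z/2)^{k-1})$ on the first coordinate, and the presentation of $\Lambda_k$ obtained by adjoining the length-one generators $\lambda_j$ to $\Lambda_{k-1}$ subject to the Adem relations \eqref{ct1}. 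Singer's $\varphi_k$ is compatible with these in a Cartan-type manner: $\varphi_k(a_1^{(d)}\otimes w)$ can be rewritten through the $\lambda_j$'s together with $\varphi_{k-1}$ applied to a correction of $w$. One then checks that feeding the inductive hypothesis for $\varphi_{k-1}$ into this recursion reproduces $\sum_J\langle z,v^J\rangle\lambda_J$; the structural input that makes this work is the triangular dependence of the $v_i$ on the variables, so that $v^J$ factors as $v_1^{j_1}\cdot(v_2^{j_2}\cdots v_k^{j_k})$ and the pairing splits off the first divided-power factor cleanly.

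Fourth, well-definedness: the right-hand side, read off from all index tuples $J$, must be consistent with the defining relations of $\Lambda_k$, i.e.\ rewriting each inadmissible $\lambda_J$ by \eqref{ct1} must produce the same element as collecting only the admissible $J$. This reduces to a binomial-coefficient identity pitting the coefficients $\binom{t-\ell-1}{2t-k}$ of \eqref{ct1} against the expansion coefficients of the forms $v_i$; and if one further wants the chain-map property needed to pass to $\Ext_{\A}$, one checks an analogous compatibility with the differential \eqref{ct2}. Dually, the entire statement is equivalent to the assertion that Singer's dual transfer sends the dual admissible basis vector $\lambda_J^{\vee}$ to the element $v^J=v_1^{j_1}\cdots v_k^{j_k}$ of $\Z/2[x_1,\dots,x_k]$, which one may instead verify on a generating set.

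The step I expect to be the real obstacle is the reconciliation carried out inside the inductive step. Singer's $\varphi_k$ is defined homotopy-theoretically, through the stable transfer and Thom-spectrum machinery, and is therefore only indirectly a map into $\Lambda$; matching it against a purely combinatorial coefficient rule demands an explicit cocycle-level computation in the cobar/lambda complex with tight control over the Adem relations. The Cartan-type recursion is precisely what keeps this bookkeeping finite and manageable, and the binomial identities attached to \eqref{ct1} and \eqref{ct2} are the technical core of the verification.
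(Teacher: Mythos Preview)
The paper does not prove this theorem. It is stated as Theorem~\ref{dlCH} with an attribution to Chon and Ha \cite{ChonHaCRASS2011} and is used as a black box; the paper's own contribution begins with Corollary~\ref{mdp}, which records a recursive consequence of the formula, and then proceeds to applications. There is therefore no ``paper's own proof'' to compare your proposal against.

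That said, your outline is a plausible reconstruction of how such a formula is established: induction on $k$, a K\"unneth-type splitting on the homology side matched against the length filtration on $\Lambda$, and a Cartan-style recursion linking $\varphi_k$ to $\varphi_{k-1}$. Indeed, the recursion you allude to in your third paragraph is exactly what the paper records (without proof) as Corollary~\ref{mdp}. The main soft spot in your sketch is the inductive step itself: you assert that Singer's $\varphi_k$ satisfies a Cartan-type compatibility with the K\"unneth decomposition, but you do not say which concrete model of $\varphi_k$ you are using or how the correction term (the dual Steenrod action in Corollary~\ref{mdp}) arises. Without pinning down that mechanism, the ``reconciliation'' you flag as the real obstacle remains an assertion rather than an argument. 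If you want a self-contained proof you would need to either work directly with Singer's cobar-level definition and carry out the cocycle computation you mention, or take the recursive formula of Corollary~\ref{mdp} as the \emph{definition} of $\varphi_k$ and then verify separately that it agrees with Singer's map --- both routes require details not present in your proposal.
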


The transfer map $\varphi_k$ in Theorem \ref{dlCH} can be defined recursively. This construction is based on splicing together connecting homomorphisms, a process described in \cite{ChonHaCRASS2011}.

\begin{corollary}\label{mdp}
The transfer map $\varphi_k: H_*(B(\Z/2)^k) \to \Lambda_k$ satisfies the following recursive formula for a basis element $a_k^{(t_k)}a_{k-1}^{(t_{k-1})}\dots a_1^{(t_1)}$: 
$$\varphi_k\bigg(a_k^{(t_k)}a_{k-1}^{(t_{k-1})}\dots a_1^{(t_1)}\bigg) = \sum_{i\ge t_1} \lambda_{i}\cdot \varphi_{k-1}\bigg(\big(a_{k}^{(t_k)}\dots a_2^{(t_2)}\big)\cdot \Sq_*^{i - t_1}\bigg).$$
Here $\Sq_*^{j}$ denotes the dual Steenrod operation.
\end{corollary}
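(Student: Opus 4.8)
The plan is to derive the recursive formula directly from the generating-function description of $\varphi_k$ in Theorem \ref{dlCH}. Recall that $\varphi_k$ sends $z = a^{(I)}$ to $\sum_J \langle z, v^J\rangle \lambda_J$, where the $v_i$ are the "generating" dual variables attached to the splicing construction; the key structural fact I would use is that the variable change relating the $v_i$ to the $x_j$ is triangular, so that $v_1$ is (essentially) the Dickson-type variable carrying the information of the first tensor factor while $v_2,\dots,v_k$ involve only $x_2,\dots,x_k$. Concretely, I would first set up notation: write the monomial as $w = a_k^{(t_k)}\cdots a_1^{(t_1)}$, so that $\langle w, v^J\rangle = \langle a_k^{(t_k)}\cdots a_2^{(t_2)} \otimes a_1^{(t_1)},\ v^J\rangle$ under the coalgebra/tensor splitting of $H_*(B(\Z/2)^k) = H_*(B(\Z/2)^{k-1}) \otimes H_*(B\Z/2)$.

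Second, I would analyze how a length-$k$ monomial $v^J = v_1^{j_1}\cdots v_k^{j_k}$ pairs against this tensor. Writing $J = (i, J')$ with $J' = (i_2,\dots,i_k)$, the coefficient $\langle w, v^J\rangle$ factors: the $v_1$-part contributes the pairing of $a_1^{(t_1)}$ (together with whatever part of the lower variables gets mixed in by the triangular change of variables) against $v_1^{i}$, and the remainder pairs $a_k^{(t_k)}\cdots a_2^{(t_2)}$ against $v_2^{i_2}\cdots v_k^{i_k}$. The mixing is exactly what produces the dual Steenrod operation: expanding $v_1^{i}$ in terms of $x_1$ and the lower $x$'s and extracting the coefficient of $x_1^{t_1}$ leaves a polynomial operator applied to the lower factor, which one identifies with $\Sq_*^{\,i - t_1}$ (this is the standard fact that the total squaring operation appears as the "diagonal" in the Dickson/generating-function variables, used already by Chon--Ha). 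This forces $i \ge t_1$ for a nonzero contribution, giving the range of summation. Assembling, $\langle w, v^{(i,J')}\rangle = \langle (a_k^{(t_k)}\cdots a_2^{(t_2)})\cdot \Sq_*^{\,i-t_1},\ v^{J'}\rangle$, and summing over $J'$ recognizes the inner sum as $\varphi_{k-1}$ applied to that element.

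Third, I would re-sum:
\[
\varphi_k(w) = \sum_{i \ge t_1}\ \sum_{J'} \langle (a_k^{(t_k)}\cdots a_2^{(t_2)})\cdot \Sq_*^{\,i - t_1},\ v^{J'}\rangle\, \lambda_i \lambda_{J'} = \sum_{i \ge t_1} \lambda_i \cdot \varphi_{k-1}\bigl((a_k^{(t_k)}\cdots a_2^{(t_2)})\cdot \Sq_*^{\,i-t_1}\bigr),
\]
using bilinearity and the fact that left-multiplication by $\lambda_i$ on $\Lambda_{k-1}$ is linear (one does not even need to reduce $\lambda_i\lambda_{J'}$ to admissible form, since the identity is an equality of elements of $\Lambda_k$). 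I would also remark that $\varphi_{k-1}$ must be extended linearly over the (possibly non-monomial) element $(a_k^{(t_k)}\cdots a_2^{(t_2)})\cdot \Sq_*^{\,i-t_1} \in H_*(B(\Z/2)^{k-1})$, and that degree considerations make the sum over $i$ finite.

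The main obstacle is pinning down precisely the claim in the second step: that extracting the coefficient of $x_1^{t_1}$ from the generating-function expression realizes the dual Steenrod square $\Sq_*^{\,i-t_1}$ on the remaining factor. This is where the real content lies — it requires unwinding the triangular change of variables $x_j \leftrightarrow v_j$ from \cite{ChonHaCRASS2011} and matching the resulting binomial coefficients against the Adem-type formula \eqref{ct1} for the $\lambda$'s (equivalently, against the Milnor coproduct formula for $\Sq_*$). I expect this to reduce, after the dust settles, to a binomial-coefficient identity mod $2$; the bookkeeping of indices and the compatibility of the splicing maps with the tensor decomposition is the part most likely to need care, whereas the outer re-summation and linearity arguments are routine.
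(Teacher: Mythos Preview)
The paper does not actually prove this corollary: the justification given is a single sentence preceding the statement, namely that ``this construction is based on splicing together connecting homomorphisms, a process described in \cite{ChonHaCRASS2011}.'' No further argument is offered; the corollary is treated as a direct consequence of the Chon--Ha splicing construction, with the details deferred entirely to that reference.

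Your approach is therefore genuinely different in that you attempt an \emph{explicit} derivation from the generating-function description of Theorem~\ref{dlCH}, rather than invoking the splicing of connecting maps abstractly. The outline you give --- tensor-splitting $H_*(B(\Z/2)^k)$, factoring $J=(i,J')$, identifying the extraction of the $x_1^{t_1}$-coefficient from $v_1^i$ with a dual Steenrod operation acting on the remaining factor, and re-summing --- is the correct skeleton, and you have correctly isolated the one non-formal step. Your approach buys an intrinsic, self-contained verification that the generating-function definition of $\varphi_k$ really does satisfy the recursion, whereas the paper's route (via \cite{ChonHaCRASS2011}) treats the recursion as essentially definitional, coming from the iterated connecting-homomorphism construction of the transfer.

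One caution on your triangularity claim: you should check carefully in the Chon--Ha setup which direction the triangularity runs (i.e., whether it is $v_1$ that isolates $x_1$ or whether the indexing is reversed), since the correspondence in Theorem~\ref{dlCH} is written as $a[x_k,\ldots,x_1]\to\lambda[v_1,\ldots,v_k]$ with the $x$'s in reverse order. Getting this orientation wrong would produce the mirror-image recursion~\eqref{ctS} discussed later in the paper rather than the one stated here. Otherwise your plan is sound, and the binomial-coefficient identity you anticipate at the end is indeed where the Milnor coproduct formula for $\Sq_*$ enters.
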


Note that the right action of $\Sq_*^j$ on a single divided power generator is given by the formula: $(a^{(t)})\Sq_*^j = \binom{t-j}{j} a^{(t)}.$ Applying the Cartan formula repeatedly gives:
\begin{equation}\label{ct3}
(a_k^{(i_k)} \dots a_1^{(i_1)})\Sq_*^j = \sum_{j_k + \dots + j_1 = j} (a_k^{(i_k)})\Sq_*^{j_k} \dots (a_1^{(i_1)})\Sq_*^{j_1} = 
\sum_{\substack{j_1 + \cdots + j_k = j \\ j_t \leq i_t}}
\prod_{1\leq t\leq k}
\binom{i_t - j_t}{j_t}
\, a_t^{(i_t - j_t)}.
\end{equation}

Based on the proposition, we provide explicit descriptions of the results for $3 \leq k \leq 5.$

\subsubsection*{The case k=3}

\begin{align*}
&\varphi_3\left(a_3^{(t_3)} a_2^{(t_2)} a_1^{(t_1)}\right)
= \sum_{i_1 \ge t_1}
\lambda_{i_1} \,
\varphi_2\left(\mathrm{Sq}_*^{i_1 - t_1}\left(a_2^{(t_3)} a_1^{(t_2)}\right)\right) \\[1.5ex]
&= \sum_{i_1 \ge t_1}
\sum_{\substack{u_1 + u_2 = i_1 - t_1 \\ u_1, u_2 \ge 0}}
\lambda_{i_1} \,
\varphi_2\left(
\binom{-u_1 + t_3}{u_1}
\binom{-u_2 + t_2}{u_2} \,
a_2^{(-u_1 + t_3)} a_1^{(-u_2 + t_2)}
\right) \\[1.5ex]
&= \sum_{i_1 \ge t_1}
\sum_{\substack{u_1 + u_2 = i_1 - t_1 \\ u_1, u_2 \ge 0}}
\lambda_{i_1} \,
\binom{-u_1 + t_3}{u_1}
\binom{-u_2 + t_2}{u_2} \,
\varphi_2\left(a_2^{(-u_1 + t_3)} a_1^{(-u_2 + t_2)}\right) \\[1.5ex]
&= \sum_{i_1 \ge t_1}
\sum_{\substack{u_1 + u_2 = i_1 - t_1 \\ u_1, u_2 \ge 0}}
\sum_{i_2 \ge -u_2 + t_2}
\lambda_{i_1} \,
\binom{-u_1 + t_3}{u_1}
\binom{-u_2 + t_2}{u_2} \,
\lambda_{i_2} \,
\varphi_1\left(\mathrm{Sq}_*^{i_2 + u_2 - t_2}\left(a_1^{(-u_1 + t_3)}\right)\right) \\[1.5ex]
&= \sum_{i_1 \ge t_1}
\sum_{\substack{u_1 + u_2 = i_1 - t_1 \\ u_1, u_2 \ge 0}}
\sum_{i_2 \ge -u_2 + t_2}
\binom{t_3-u_1}{u_1}
\binom{t_2-u_2}{u_2} \,
\binom{t_1+t_2 + t_3 -  i_1 - i_2}{i_2 + u_2 - t_2} \,
\lambda_{i_1} \,
\lambda_{i_2} \,
\lambda_{t_1+t_2 + t_3 - i_1 - i_2}.
\end{align*}

\newpage
By completely analogous computations, we obtain the formulas for the cases $k = 4$ and $k = 5$ below.

\subsubsection*{The case k=4}

\begin{align*}
\varphi_4\left(a_4^{(t_4)} a_3^{(t_3)} a_2^{(t_2)} a_1^{(t_1)}\right)
&= \sum_{i_1 \ge t_1}
\sum_{\substack{k_1 + k_2 + k_3 = i_1 - t_1 \\ k_j \ge 0}}
\sum_{i_2 \ge -k_3 + t_2}
\sum_{\substack{u_1 + u_2 = i_2 + k_3 - t_2 \\ u_j \ge 0}}
\sum_{i_3 \ge -k_2 - u_2 + t_3} \\
&\quad 
\binom{t_4-k_1}{k_1}
\binom{t_3-k_2 }{k_2}
\binom{t_2-k_3 }{k_3}
\binom{t_4-k_1 - u_1}{u_1}
\binom{t_3-k_2 - u_2}{u_2} \\
&\quad 
\binom{t_1 + t_2 + t_3 + t_4 - i_1 - i_2 - i_3}{i_3 + k_2 + u_2 - t_3}
\lambda_{i_1}
\lambda_{i_2}
\lambda_{i_3}
\lambda_{t_1 + t_2 + t_3 + t_4 - i_1 - i_2 - i_3}.
\end{align*}

\medskip

\subsubsection*{The case k=5}
\begin{align*}
\varphi_{5}\!\left(
  a_{5}^{(t_{5})} a_{4}^{(t_{4})} a_{3}^{(t_{3})} a_{2}^{(t_{2})} a_{1}^{(t_{1})}
\right)
&=\!
\sum_{i_{1}\ge t_{1}}
\sum_{\substack{
      w_{1}+w_{2}+w_{3}+w_{4}=i_{1}-t_{1}\\[2pt]
      w_{j}\ge0
}}
\sum_{i_{2}\ge -w_{4}+t_{2}}
\sum_{\substack{
      k_{1}+k_{2}+k_{3}=i_{2}+w_{4}-t_{2}\\[2pt]
      k_{j}\ge0
}}
\sum_{i_{3}\ge -k_{3}-w_{3}+t_{3}}\\
&\sum_{\substack{
      u_{1}+u_{2}=i_{3}+k_{3}+w_{3}-t_{3}\\[2pt]
      u_{j}\ge0
}}
\sum_{i_{4}\ge -k_{2}-u_{2}-w_{2}+t_{4}}
\binom{t_5-w_{1}}{w_{1}}
\binom{t_4-w_{2}}{w_{2}}
\binom{t_3-w_{3}}{w_{3}}\\
&\binom{t_2-w_{4}}{w_{4}}
\binom{t_5-k_{1}-w_{1}}{k_{1}}
\binom{t_4-k_{2}-w_{2}}{k_{2}}
\binom{t_3-k_{3}-w_{3}}{k_{3}}\\
&\binom{t_5-k_{1}-u_{1}-w_{1}}{u_{1}}
\binom{t_4-k_{2}-u_{2}-w_{2}}{u_{2}}\\
&\binom{
  t_{1}+t_{2} + t_3 + t_4 + t_5 - i_1 - i_2 - i_3 - i_4
}{
  i_{4}+k_{2}+u_{2}+w_{2}-t_{4}
}
\lambda_{i_{1}}
\lambda_{i_{2}}
\lambda_{i_{3}}
\lambda_{i_{4}}
\lambda_{
    t_{1}+t_{2} + t_3 + t_4 + t_5 - i_1 - i_2 - i_3 - i_4
}.
\end{align*}

\section{An algorithmic approach to the Preimage problem}\label{s3}

The central goal is to determine if a given cohomology class in $\Ext_{\A}$ lies in the image of the Singer transfer. This can be rephrased at the chain level and solved algorithmically.

\subsection{The Preimage problem as a linear system}

\begin{lemma}
\label{thm:algorithm}
Let $y \in \Lambda_k$ be a cocycle (i.e., $\delta(y) = 0$) of degree $n$ representing a class $[y] \in \Ext_{\mathcal{A}}^{k,n}(\Z/2, \Z/2)$. The class $[y]$ is in the image of the Singer transfer if and only if there exists a solution $(x, z)$ to the linear equation
\[
\varphi_k(x) + \delta(z) = y
\]
where $z \in \Lambda_{k-1}$ and $x \in H_*(B(\mathbb{Z}/2)^k)$ is an $\mathcal{A}$-annihilated element (i.e., $x \in P_{\mathcal{A}}H_*(B(\mathbb{Z}/2)^k)$ satisfies $(x)Sq_*^{2^t} = 0$ for all $t \geq 0$).
\end{lemma}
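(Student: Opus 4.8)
The statement is essentially an unwinding of what it means for a class to lie in the image of $\varphi_k$ at the level of the Adams $E_2$-page, so the plan is to chase definitions through the identification of $\Ext_{\mathcal A}$ with the cohomology of the Lambda complex $(\Lambda_\bullet, \delta)$. First I would recall that, by the theorem of Bousfield--Curtis--Kan et al., $H^{k,n}(\Lambda,\delta) \cong \Ext_{\mathcal A}^{k,n}(\Z/2,\Z/2)$, so the class $[y]$ with $\delta(y)=0$ is represented by the cocycle $y \in \Lambda_k$, and two cocycles represent the same class precisely when they differ by a coboundary $\delta(z)$ with $z \in \Lambda_{k-1}$. Next I would recall from Theorem \ref{dlCH} and the discussion following it that the Singer transfer at chain level is the map $\varphi_k \colon H_*(B(\Z/2)^k) \to \Lambda_k$, and that the geometric (or homological) Singer transfer $\varphi_k$ whose image is the object of interest is obtained by restricting $\varphi_k$ to the $\mathcal A$-annihilated (primitive) elements $P_{\mathcal A}H_*(B(\Z/2)^k)$, then passing to $G_k$-coinvariants; a primitive element is one killed by all $\Sq_*^{2^t}$, $t\ge 0$, since those generate the dual Steenrod operations, which is exactly the condition imposed on $x$.

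With these identifications in hand the proof is a short two-way implication. For the forward direction, suppose $[y] \in \operatorname{Im}(\varphi_k)$. Then there is a primitive element $x \in P_{\mathcal A}H_*(B(\Z/2)^k)$ whose image $\varphi_k(x) \in \Lambda_k$ is a cocycle representing the same cohomology class as $y$; since $[\varphi_k(x)] = [y]$ in $H^{k,n}(\Lambda,\delta)$, the difference $y - \varphi_k(x) = y + \varphi_k(x)$ (we are over $\Z/2$) is a coboundary, i.e.\ equals $\delta(z)$ for some $z \in \Lambda_{k-1}$, which rearranges to $\varphi_k(x) + \delta(z) = y$. For the converse, suppose such a pair $(x,z)$ exists with $x$ primitive. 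Then $\varphi_k(x) = y + \delta(z)$ is cohomologous to $y$, hence represents $[y]$; and because $x$ is $\mathcal A$-annihilated it descends to an element of the domain $[P_{\mathcal A}H_*(B(\Z/2)^k)]_{G_k}$ of the homological Singer transfer, whose image under $\varphi_k$ is exactly $[y]$. One must also note that $\varphi_k(x)$ is automatically a cocycle when $x$ is primitive — this is part of the Chon--Ha chain-level construction, where $\varphi_k$ is built from connecting homomorphisms and hence carries the trivial differential on $P_{\mathcal A}H_*(B(\Z/2)^k)$ to $\delta$ on $\Lambda_k$ — so the equation $\varphi_k(x)+\delta(z)=y$ is consistent with $\delta(y)=0$.

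Finally I would remark that the content of the lemma, beyond the definitional equivalence, is the observation that once one fixes the degree $n$ and the length $k$, every object involved lives in a finite-dimensional $\Z/2$-vector space: $\Lambda_k$ and $\Lambda_{k-1}$ in degree $n$ are finite-dimensional, $H_*(B(\Z/2)^k)$ in the appropriate degree is finite-dimensional, the primitivity constraints $(x)\Sq_*^{2^t}=0$ are linear, and $\varphi_k$ and $\delta$ are linear maps; hence ``$\exists (x,z)$'' is decidable by ordinary linear algebra over $\Z/2$ (Gaussian elimination on the combined system). The only subtlety to flag is bookkeeping rather than mathematics: one must be careful that $x$ is required to range over the primitive subspace, not all of $H_*(B(\Z/2)^k)$, so the linear system couples the equation $\varphi_k(x)+\delta(z)=y$ with the homogeneous system cutting out $P_{\mathcal A}H_*(B(\Z/2)^k)$; I expect this coupling — and the need to enumerate the relevant degree-$n$ bases of the Lambda algebra and the admissible-monomial relations — to be the only place where care is needed, the rest being formal.
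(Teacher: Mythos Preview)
Your proposal is correct and follows essentially the same approach as the paper: both directions are proved by unwinding the identification of $\Ext_{\mathcal A}$ with $H^*(\Lambda,\delta)$ and the definition of cohomologous cocycles, then observing that the resulting equation is linear over $\Z/2$. Your additional remark that $\varphi_k(x)$ is automatically a cocycle when $x$ is primitive, and your explicit flagging of the coupling between the primitivity constraints and the main equation, are helpful refinements that the paper's proof leaves implicit.
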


\begin{proof}
The proof establishes the equivalence in both directions.

\textbf{($\Rightarrow$) Necessity:} 
Suppose $[y]$ is in the image of the Singer transfer. By definition, there exists a primitive element $x \in P_{\mathcal{A}}H_*(B(\mathbb{Z}/2)^k)$ such that the transfer map sends $x$ to a representative of $[y]$. That is, $\varphi_k(x)$ and $y$ represent the same cohomology class in $\Ext_{\mathcal{A}}^{k,*}(\Z/2, \Z/2)$.

Two cocycles $\varphi_k(x)$ and $y$ represent the same class in cohomology if and only if they are homologous in the lambda chain complex, meaning their difference is a boundary. Specifically, there must exist an element $z \in \Lambda_{k-1}$ such that 
\[
\varphi_k(x) - y = \delta(z).
\]
Since we work over the field $\Z/2,$ this is equivalent to 
\[
\varphi_k(x) + \delta(z) = y.
\]

\textbf{($\Leftarrow$) Sufficiency:}
Conversely, suppose there exists a solution $(x, z)$ with $x \in P_{\mathcal{A}}H_*(B(\mathbb{Z}/2)^k)$ and $z \in \Lambda_{k-1}$ satisfying $\varphi_k(x) + \delta(z) = y$. Then $\varphi_k(x)$ and $y$ differ by the boundary $\delta(z)$, so they represent the same cohomology class. Since $x$ is $\mathcal{A}$-annihilated, it lies in the domain of the Singer transfer, and thus $[y] = [\varphi_k(x)]$ is in the image of the transfer.

\textbf{Computational Formulation:}
The equation $\varphi_k(x) + \delta(z) = y$ is linear in the coefficients of $x$ and $z$. Let $x = \sum_{I \in \mathcal{B}_H} x_I a^{(I)}$ and $z = \sum_{J \in \mathcal{B}_{\Lambda'}} z_J \lambda_J$ be general elements, where:
\begin{itemize}
    \item $\mathcal{B}_H = \{a^{(I)} : I = (i_1, \ldots, i_k), \sum_{j=1}^k (2^j - 1)i_j = n\}$ is a basis for the degree-$n$ component of $H_*(B(\mathbb{Z}/2)^k)$,
    \item $\mathcal{B}_{\Lambda'} = \{\lambda_J : J \text{ is a positive composition of degree } n+1 \text{ into } k-1 \text{ parts}\}$ is a basis for the relevant component of $\Lambda_{k-1}$.
\end{itemize}

By linearity of $\varphi_k$ and $\delta$, the equation becomes:
\[
\sum_{I \in \mathcal{B}_H} x_I \varphi_k(a^{(I)}) + \sum_{J \in \mathcal{B}_{\Lambda'}} z_J \delta(\lambda_J) = y.
\]

This can be formulated as a system of linear equations $M\mathbf{v} = \mathbf{c}$ over $\Z/2$, where:
\begin{itemize}
    \item The vector of unknowns is $\mathbf{v} = (x_{I_1}, \ldots, x_{I_s}, z_{J_1}, \ldots, z_{J_t})^T$,
    \item The matrix $M$ has columns corresponding to $\varphi_k(a^{(I)})$ for $I \in \mathcal{B}_H$ and $\delta(\lambda_J)$ for $J \in \mathcal{B}_{\Lambda'}$,
    \item The vector $\mathbf{c}$ contains the coefficients of $y$ in the standard lambda basis.
\end{itemize}

A solution $(x, z)$ exists if and only if this system is consistent. However, not every solution yields an element in the transfer's image---one must additionally verify that the resulting polynomial $x$ is $\mathcal{A}$-annihilated, i.e., $(x)Sq_*^{2^t} = 0$ for all relevant $t$.

\textbf{Degree Considerations:}
For the equation to be meaningful, we require degree compatibility: if $\deg(y) = n$, then $\deg(x)$ must satisfy the relation imposed by the transfer map, and $\deg(z) = n+1$ to ensure $\deg(\delta(z)) = n$.
\end{proof}

\begin{remark}
The computational challenge lies in two aspects: (1) solving the potentially large linear system $M\mathbf{v} = \mathbf{c}$, and (2) checking the $\mathcal{A}$-annihilation condition, which requires verifying that $(x)Sq_*^{2^t} = 0$ for $t = 0, 1, \ldots, \lfloor \log_2(\deg(x)) \rfloor$. The algorithm's efficiency depends critically on the dimensions of the bases $\mathcal{B}_H$ and $\mathcal{B}_{\Lambda'}$, which grow rapidly with the degree and the parameter $k$.
\end{remark}

Lemma \ref{thm:algorithm} leads directly to the following computational algorithm.

\medskip

\noindent\rule{\textwidth}{0.4pt}
\noindent \textbf{Algorithm 0: Finding Preimages under the Singer transfer representation in $\Lambda$}
\label{alg:preimage}
\noindent\rule{\textwidth}{0.4pt}

\begin{algorithmic}[1]
\Function{FindAnnihilatedPreimageIterative}{$y_{\text{target}}, k, \text{deg}_x, \text{deg}_z, \text{max\_z\_terms}$}
    \State $b_{\text{adm}} \leftarrow \text{AdmissibleForm}(y_{\text{target}})$
    \State $\mathcal{B}_{H} \leftarrow \text{GenerateBasis}(\text{compositions}(\text{deg}_x, k))$
    \State $\mathcal{B}_{\Lambda'} \leftarrow \text{GenerateBasis}(\text{positive\_compositions}(\text{deg}_z, k-1))$
    \State $\text{AnnihilatedSolutions} \leftarrow []$
    \State $\text{total\_candidates\_checked} \leftarrow 0$
    
    \For{$n = 0$ \textbf{to} $\text{max\_z\_terms}$}
        \State \Comment{Search for solutions where $z$ has exactly $n$ terms}
        \State $\text{Print}(\text{``Searching z with''} + n + \text{" terms..."})$
        
        \If{$n=0$}
            \State $\text{combinations} \leftarrow [()]$ \Comment{Special case for $z = 0$}
        \Else
            \State $\text{combinations} \leftarrow \text{Combinations}(\mathcal{B}_{\Lambda'}, n)$
        \EndIf
        
        \State $\text{Print}(\text{``Number of z combinations:''} + |\text{combinations}|)$
        
        \For{each basis combination $C_z$ in combinations}
             \If{$n = 0$}
                 \State $z_{\text{cand}} \leftarrow 0$
             \Else
                 \State $z_{\text{cand}} \leftarrow \sum_{f \in C_z} f$
             \EndIf
             
             \State \Comment{--- Compute differential and target for phi equation ---}
             \State $\delta_z \leftarrow \delta(z_{\text{cand}})$
             \State $b' \leftarrow b_{\text{adm}} + \delta_z$ \Comment{Note: addition in $\Z/2$}
             
             \State \Comment{--- Solve the sub-problem $\varphi_k(x) = b'$ ---}
             \State $\text{solution\_space} \leftarrow \text{SolvePhiSystem}(b', k, \text{deg}_x)$
             
             \If{solution\_space $\neq$ null}
                \State $x_p \leftarrow \text{solution\_space.particular}$
                \State $\text{kernel\_basis} \leftarrow \text{solution\_space.kernel}$
                \State $\text{kernel\_dim} \leftarrow |\text{kernel\_basis}|$
                
                \State \Comment{--- Check all kernel combinations for $\A$-annihilation ---}
                \For{$i = 0$ \textbf{to} $2^{\text{kernel\_dim}} - 1$}
                    \State $x_{\text{candidate}} \leftarrow x_p$
                    \For{$j = 0$ \textbf{to} $\text{kernel\_dim} - 1$}
                        \If{$(i \gg j) \& 1 = 1$}
                            \State $x_{\text{candidate}} \leftarrow x_{\text{candidate}} + \text{kernel\_basis}[j]$
                        \EndIf
                    \EndFor
                    
                    \State $\text{total\_candidates\_checked} \leftarrow \text{total\_candidates\_checked} + 1$
                    
                    \If{\textsc{IsAnnihilated}($x_{\text{candidate}}$, $\text{deg}_x$)}
                        \State $z_{\text{final}} \leftarrow z_{\text{cand}}$
                        \State \Comment{--- Verify the solution ---}
                        \State $\text{verification} \leftarrow \delta(z_{\text{final}}) + \varphi_k(x_{\text{candidate}})$
                        \If{$\text{verification} = b_{\text{adm}}$}
                            \State Append $(z_{\text{final}}, x_{\text{candidate}})$ to AnnihilatedSolutions
                            \State $\text{Print}(\text{``Found solution after checking''} + \text{total\_candidates\_checked} + \text{``candidates''})$
                            \State \textbf{return} AnnihilatedSolutions \Comment{Early termination option}
                        \EndIf
                    \EndIf
                    
                    \State \Comment{--- Progress reporting for large searches ---}
                    \If{$\text{total\_candidates\_checked} \bmod 10000 = 0$}
                        \State $\text{Print}(\text{``Progress: checked''} + \text{total\_candidates\_checked} + \text{``candidates...''})$
                    \EndIf
                \EndFor
             \Else
                \State \Comment{No solution exists for this $z_{\text{cand}}$}
                \State $\text{Print}(\text{``No solution for current $z$ combination''})$
             \EndIf
        \EndFor
    \EndFor
    
    \State $\text{Print}(\text{``Search completed. Total candidates checked:''} + \text{total\_candidates\_checked})$
    \State \Return AnnihilatedSolutions
\EndFunction

\State
\State \Comment{--- Helper function for $\mathcal A$-annihilation check ---}
\Function{IsAnnihilated}{$x_{\text{poly}}, \text{degree\_x}$}
    \If{$x_{\text{poly}} = 0$} \Return \textbf{true} \EndIf
    
    \State $\text{max\_t} \leftarrow \lfloor \log_2(\text{degree\_x}) \rfloor + 1$
    \For{$t = 0$ \textbf{to} $\text{max\_t} - 1$}
        \State $j_{\text{sq}} \leftarrow 2^t$
        \State $\text{sq\_action} \leftarrow \text{ApplySteenrodSquares}(x_{\text{poly}}, j_{\text{sq}})$
        \If{$\text{sq\_action} \neq 0$}
            \Return \textbf{false}
        \EndIf
    \EndFor
    \Return \textbf{true}
\EndFunction

\State
\State \Comment{--- Helper function for solving phi system ---}
\Function{SolvePhiSystem}{$b', k, \text{deg\_x}$}
    \State $\mathcal{B}_{H} \leftarrow \text{GenerateBasis}(\text{compositions}(\text{deg\_x}, k))$
    \State $\text{phi\_matrix} \leftarrow \text{BuildPhiMatrix}(\mathcal{B}_{H}, k)$
    \State $b'_{\text{vec}} \leftarrow \text{ConvertToVector}(b')$
    
    \If{$\text{SystemIsConsistent}(\text{phi\_matrix}, b'_{\text{vec}})$}
        \State $\text{particular\_sol} \leftarrow \text{SolveLinearSystem}(\text{phi\_matrix}, b'_{\text{vec}})$
        \State $\text{kernel\_basis} \leftarrow \text{ComputeKernel}(\text{phi\_matrix})$
        \State \Return $\{\text{particular}: \text{particular\_sol}, \text{kernel}: \text{kernel\_basis}\}$
    \Else
        \State \Return null \Comment{System is inconsistent}
    \EndIf
\EndFunction
\end{algorithmic}

\vspace{0.5cm}
    
Note that a critical step in the algorithm is to verify if a candidate solution $x \in H_*(B(\mathbb{Z}/2)^k)$ is $\mathcal{A}$-annihilated, meaning $(x)Sq_*^{2^{t}} = 0$ for all $t \geq 0$ where $2^t \leq \deg(x)$. A candidate solution $x$ is a polynomial of the form $x = \sum_{I} c_I a^{(I)}$, where $a^{(I)} = a_k^{(i_k)} \cdots a_1^{(i_1)}$ and $c_I \in \Z/2$. By linearity of the Steenrod operations, we need to compute $(x)Sq_*^{2^{t}} = \sum_I c_I (a^{(I)})Sq_*^{2^{t}}$. The action on a monomial is determined by equation \eqref{ct3}. This allows us to break down the action on a monomial into actions on its individual divided power factors $a_m^{(i_m)}$. By substituting the formula for the action on a generator, we can compute $(a^{(I)})Sq_*^{2^{t}}$ for any monomial basis element. Summing these results for a candidate solution $x$ and checking if the sum is zero for all required $Sq_*^{2^t}$ (specifically for $t = 0, 1, \ldots, \lfloor \log_2(\deg(x)) \rfloor$) confirms whether it is $\mathcal{A}$-annihilated. The computational complexity of this verification step depends on the number of terms in $x$ and the degree of $x$, as we must check $O(\log(\deg(x)))$ different Steenrod operations, each potentially producing multiple terms.

\subsection{Application to the detection of the indecomposables elements $c_0,$ $d_0$ and $p_0$}
The algorithm described above provides a powerful and systematic tool for investigating fundamental questions about the structure of $\Ext_{\A}$. In particular, it allows us to tackle the problem of detecting specific, well-known, and important cocycles.

In the subsequent application of this work, we will apply Algorithm \ref{alg:preimage} to the indecomposables elements $c_0\in \Ext_{\A}^{3,11}(\Z/2, \Z/2),$ $d_0 \in \Ext_{\A}^{4,18}(\Z/2, \Z/2)$ and $p_0 \in \Ext_{\A}^{4,37}(\Z/2, \Z/2)$. By constructing the appropriate linear system for each element and solving for a preimage, we can determine if they lie in the image of the Singer transfers $\varphi_3$ and $\varphi_4,$ respectively. This provides a direct computational path to confirm or deny their detection by the transfer, a question of significant interest in the field.

\medskip

$\bullet$ \underline{\textbf{The indecomposable element $c_0$:}}

According to Lin~\cite{Lin}, the element $y = \lambda_3\lambda_3 \lambda_2 \in \Lambda_3$ is a representative of $c_0 \in \Ext_{\A}^{3,11}(\mathbb{Z}/2, \mathbb{Z}/2)$.  
By applying Algorithm~\ref{alg:preimage}, we obtain the following results:

$\bullet$ For $z = 0,$ we have
$$x = a_3^{(2)} a_2^{(3)} a_1^{(3)} + a_3^{(1)} a_2^{(4)} a_1^{(3)} + a_3^{(1)} a_2^{(2)} a_1^{(5)} + a_3^{(1)} a_2^{(1)} a_1^{(6)}.$$

$\bullet$ For $z\neq 0,$

$-$ $z \in \big\{\lambda_2 \lambda_7,\  \lambda_7 \lambda_2\big\}$ and $x = a_3^{(2)} a_2^{(3)} a_1^{(3)} + a_3^{(1)} a_2^{(4)} a_1^{(3)} + a_3^{(1)} a_2^{(2)} a_1^{(5)} + a_3^{(1)} a_2^{(1)} a_1^{(6)}.$

$-$ $z = \lambda_8 \lambda_1$ and 
\begin{align*}
 x &= a_3^{(4)}a_2^{(3)}a_1^{(1)} + a_3^{(3)}a_2^{(3)}a_1^{(2)} + a_3^{(2)}a_2^{(5)}a_1^{(1)}+ a_3^{(2)}a_2^{(3)}a_1^{(3)}\\
&  + a_3^{(1)}a_2^{(6)}a_1^{(1)} + a_3^{(1)}a_2^{(4)}a_1^{(3)} + a_3^{(1)}a_2^{(2)}a_1^{(5)} + a_3^{(1)}a_2^{(1)}a_1^{(6)}.
\end{align*}

By hand-checking the expression above, we get: $x \in P_{\A} H_8(B(\mathbb{Z}/2)^3),$ $\delta(z) = 0$ and $\varphi_3(x) = y$.  
Therefore, $[\varphi_3(x)] = [y] = c_0$, which implies that $c_0$ lies in the image of the Singer transfer $\varphi_3$. This result was previously investigated by Boardman \cite{Boardman} through a different method.

\medskip

$\bullet$ \underline{\textbf{The indecomposable element $d_0$:}}

As is known \cite{Lin}, the element $$y =\lambda^{2}_3\lambda_2\lambda_6 +  \lambda^{2}_3\lambda_4^{2} + \lambda_3 \lambda_5\lambda_4\lambda_2 + \lambda_7 \lambda_1\lambda_5\lambda_1 =   \lambda^{2}_3\lambda_2\lambda_6 +  \lambda^{2}_3\lambda_4^{2} + \lambda_3 \lambda_5\lambda_4\lambda_2 + \lambda_3 \lambda_5\lambda^{2}_3.$$ 
is a representative of $d_0 \in \Ext_{\A}^{4,18}(\mathbb{Z}/2, \mathbb{Z}/2)$.  

By applying Algorithm~\ref{alg:preimage}, we obtain
$$
z = \lambda^{2}_3\lambda_{9} + \lambda_{3}\lambda_{9}\lambda_{3},  
$$
and
\begin{align*}
x=& a_4^{(1)} a_3^{(1)} a_2^{(6)} a_1^{(6)} 
+ a_4^{(1)} a_3^{(2)} a_2^{(5)} a_1^{(6)} 
+ a_4^{(1)} a_3^{(3)} a_2^{(4)} a_1^{(6)} 
+ a_4^{(1)} a_3^{(4)} a_2^{(3)} a_1^{(6)} \\
& + a_4^{(1)} a_3^{(5)} a_2^{(2)} a_1^{(6)} 
+ a_4^{(1)} a_3^{(6)} a_2^{(1)} a_1^{(6)} 
+ a_4^{(2)} a_3^{(1)} a_2^{(6)} a_1^{(5)} 
+ a_4^{(2)} a_3^{(2)} a_2^{(5)} a_1^{(5)} \\
& + a_4^{(2)} a_3^{(3)} a_2^{(4)} a_1^{(5)} 
+ a_4^{(2)} a_3^{(4)} a_2^{(3)} a_1^{(5)} 
+ a_4^{(2)} a_3^{(5)} a_2^{(2)} a_1^{(5)} 
+ a_4^{(2)} a_3^{(6)} a_2^{(1)} a_1^{(5)} \\
& + a_4^{(3)} a_3^{(1)} a_2^{(5)} a_1^{(5)} 
+ a_4^{(3)} a_3^{(2)} a_2^{(6)} a_1^{(3)} 
+ a_4^{(3)} a_3^{(3)} a_2^{(2)} a_1^{(6)} 
+ a_4^{(3)} a_3^{(4)} a_2^{(1)} a_1^{(6)} \\
& + a_4^{(3)} a_3^{(4)} a_2^{(2)} a_1^{(5)} 
+ a_4^{(3)} a_3^{(4)} a_2^{(4)} a_1^{(3)} 
+ a_4^{(3)} a_3^{(6)} a_2^{(2)} a_1^{(3)} 
+ a_4^{(4)} a_3^{(1)} a_2^{(6)} a_1^{(3)} \\
& + a_4^{(4)} a_3^{(2)} a_2^{(5)} a_1^{(3)} 
+ a_4^{(4)} a_3^{(3)} a_2^{(4)} a_1^{(3)} 
+ a_4^{(4)} a_3^{(4)} a_2^{(3)} a_1^{(3)} 
+ a_4^{(4)} a_3^{(5)} a_2^{(2)} a_1^{(3)} \\
& + a_4^{(4)} a_3^{(6)} a_2^{(1)} a_1^{(3)} 
+ a_4^{(5)} a_3^{(1)} a_2^{(3)} a_1^{(5)} 
+ a_4^{(5)} a_3^{(2)} a_2^{(1)} a_1^{(6)} 
+ a_4^{(5)} a_3^{(2)} a_2^{(2)} a_1^{(5)} \\
& + a_4^{(5)} a_3^{(2)} a_2^{(4)} a_1^{(3)} 
+ a_4^{(5)} a_3^{(3)} a_2^{(1)} a_1^{(5)} 
+ a_4^{(5)} a_3^{(3)} a_2^{(3)} a_1^{(3)} 
+ a_4^{(5)} a_3^{(5)} a_2^{(1)} a_1^{(3)} \\
& + a_4^{(6)} a_3^{(1)} a_2^{(1)} a_1^{(6)} 
+ a_4^{(6)} a_3^{(1)} a_2^{(2)} a_1^{(5)} 
+ a_4^{(6)} a_3^{(1)} a_2^{(4)} a_1^{(3)} 
+ a_4^{(6)} a_3^{(2)} a_2^{(3)} a_1^{(3)}.
\end{align*}

By direct verification, we have the following:

$\bullet$ $x \in P_{\A} H_14(B(\mathbb{Z}/2)^4),$ and due to the instability condition, we only consider the effect of $Sq_*^{2^{j}}$ for $0\leq j\leq 2;$

$\bullet$

\begin{align*}
\varphi_4(x) &= \lambda_3 \lambda_1 \lambda_5^2 + \lambda_3 \lambda_1 \lambda_6 \lambda_4 + \lambda_3 \lambda_1 \lambda_8 \lambda_2 + \lambda_3^2 \lambda_2 \lambda_6 \\
&+ \lambda_3^3 \lambda_5 + \lambda_3^2 \lambda_4^2 + \lambda_3^2 \lambda_7 \lambda_1 + \lambda_3 \lambda_5 \lambda_1 \lambda_5 \\
&+ \lambda_3 \lambda_5 \lambda_3^2 + \lambda_3 \lambda_5 \lambda_4 \lambda_2 + \lambda_3 \lambda_5^2 \lambda_1\\
&= \lambda_3^2 \lambda_2 \lambda_6 + \lambda_3^3 \lambda_5 + \lambda_3^2 \lambda_4^2 + \lambda_3 \lambda_5 \lambda_4 \lambda_2\\
&=(\lambda_3^2 \lambda_2 \lambda_6 + \lambda_3^2 \lambda_4^2 + \lambda_3 \lambda_5 \lambda_4 \lambda_2 + \lambda_3^2 \lambda_5\lambda_3)+ (\lambda_3^2 \lambda_5\lambda_3+ \lambda_3^3 \lambda_5)\\
&=y +  (\lambda_3^2 \lambda_5\lambda_3+ \lambda_3^3 \lambda_5).
\end{align*}

$\bullet$ $\delta(z) = \lambda_3^2 \lambda_5\lambda_3+ \lambda_3^3 \lambda_5.$ 

Thus, we get
 $$\varphi_4(x) +  \delta(z) =   y.$$  
Therefore, $[\varphi_4(x)] = [y + \delta(z)] = [y] = d_0$, which implies that $d_0$ lies in the image of $\varphi_4.$ Note that an alternative approach to this result was given earlier by Ha \cite{Ha}.

\begin{remark}

The author of \cite{Sum} mentioned that the recursive function he used differs from the one in Corollary \ref{mdp}, as follows:

\begin{equation}\label{ctS}
\varphi_k\bigg(a_k^{(t_k)}a_{k-1}^{(t_{k-1})}\dots a_1^{(t_1)}\bigg) = \sum_{i\ge t_k} \varphi_{k-1}\bigg(\big(a_{k-1}^{(t_{k-1})}\dots a_1^{(t_1)}\big)\cdot \Sq_*^{i - t_k}\bigg)\lambda_{i}.
\end{equation}

Then, by a simple computation, we obtain:

\begin{align*}
&\varphi_4(a_4^{(t_4)}a_{3}^{(t_{3})}\dots a_1^{(t_1)})\\
&=\sum_{i_4 \ge t_4} 
\sum_{\substack{
    s_1 + s_2 + s_3 = i_4 - t_4 \\
    s_1, s_2, s_3 \ge 0
}} 
\sum_{i_3 \ge -s_1 + t_3} 
\sum_{\substack{
    u_1 + u_2 = i_3 + s_1 - t_3 \\
    u_1, u_2 \ge 0
}} 
\sum_{i_2 \ge -s_2 - u_1 + t_2} \\
&\times
    \binom{-s_1 + t_3}{s_1}
    \binom{-s_2 + t_2}{s_2}
    \binom{-s_3 + t_1}{s_3} \\
&\times
    \binom{-s_2 - u_1 + t_2}{u_1}
    \binom{-s_3 - u_2 + t_1}{u_2} 
    \binom{
        -i_2 - s_2 - s_3 - u_1 - u_2 + t_1 + t_2
    }{
        i_2 + s_2 + u_1 - t_2
    } \lambda_{-i_2 - s_2 - s_3 - u_1 - u_2 + t_1 + t_2}\lambda_{i_2} \lambda_{i_3}    \lambda_{i_4}.
\end{align*}

\medskip

After that, the author of \cite{Sum} considers the following element:

\medskip

\begin{align*}
x = q_{4, 3} = & a_4^{(6)} a_3^{(6)} a_2^{(1)} a_1^{(1)}
+ a_4^{(6)} a_3^{(5)} a_2^{(2)} a_1^{(1)}
+ a_4^{(6)} a_3^{(4)} a_2^{(3)} a_1^{(1)}
+ a_4^{(6)} a_3^{(3)} a_2^{(4)} a_1^{(1)} \\[2pt]
& + a_4^{(6)} a_3^{(2)} a_2^{(5)} a_1^{(1)}
+ a_4^{(6)} a_3^{(1)} a_2^{(6)} a_1^{(1)}
+ a_4^{(5)} a_3^{(6)} a_2^{(1)} a_1^{(2)}
+ a_4^{(5)} a_3^{(5)} a_2^{(2)} a_1^{(2)} \\[2pt]
& + a_4^{(5)} a_3^{(4)} a_2^{(3)} a_1^{(2)}
+ a_4^{(5)} a_3^{(3)} a_2^{(4)} a_1^{(2)}
+ a_4^{(5)} a_3^{(2)} a_2^{(5)} a_1^{(2)}
+ a_4^{(5)} a_3^{(1)} a_2^{(6)} a_1^{(2)} \\[2pt]
& + a_4^{(5)} a_3^{(5)} a_2^{(1)} a_1^{(3)}
+ a_4^{(3)} a_3^{(6)} a_2^{(2)} a_1^{(3)}
+ a_4^{(6)} a_3^{(2)} a_2^{(3)} a_1^{(3)}
+ a_4^{(6)} a_3^{(1)} a_2^{(4)} a_1^{(3)} \\[2pt]
& + a_4^{(5)} a_3^{(2)} a_2^{(4)} a_1^{(3)}
+ a_4^{(3)} a_3^{(4)} a_2^{(4)} a_1^{(3)}
+ a_4^{(3)} a_3^{(2)} a_2^{(6)} a_1^{(3)}
+ a_4^{(3)} a_3^{(6)} a_2^{(1)} a_1^{(4)} \\[2pt]
& + a_4^{(3)} a_3^{(5)} a_2^{(2)} a_1^{(4)}
+ a_4^{(3)} a_3^{(4)} a_2^{(3)} a_1^{(4)}
+ a_4^{(3)} a_3^{(3)} a_2^{(4)} a_1^{(4)}
+ a_4^{(3)} a_3^{(2)} a_2^{(5)} a_1^{(4)} \\[2pt]
& + a_4^{(3)} a_3^{(1)} a_2^{(6)} a_1^{(4)}
+ a_4^{(5)} a_3^{(3)} a_2^{(1)} a_1^{(5)}
+ a_4^{(6)} a_3^{(1)} a_2^{(2)} a_1^{(5)}
+ a_4^{(5)} a_3^{(2)} a_2^{(2)} a_1^{(5)} \\[2pt]
& + a_4^{(3)} a_3^{(4)} a_2^{(2)} a_1^{(5)}
+ a_4^{(5)} a_3^{(1)} a_2^{(3)} a_1^{(5)}
+ a_4^{(3)} a_3^{(3)} a_2^{(3)} a_1^{(5)}
+ a_4^{(3)} a_3^{(1)} a_2^{(5)} a_1^{(5)} \\[2pt]
& + a_4^{(6)} a_3^{(1)} a_2^{(1)} a_1^{(6)}
+ a_4^{(5)} a_3^{(2)} a_2^{(1)} a_1^{(6)}
+ a_4^{(3)} a_3^{(4)} a_2^{(1)} a_1^{(6)}
+ a_4^{(3)} a_3^{(3)} a_2^{(2)} a_1^{(6)}
\end{align*}

\medskip

In~\cite{Sum}, the author uses the variable ordering \( a_1, a_2, a_3, a_4 \), so the polynomial is written as
\[
q_{4,3} = a_1^{(1)} a_2^{(1)} a_3^{(6)} a_4^{(6)} + a_1^{(1)} a_2^{(2)} a_3^{(5)} a_4^{(6)} + a_1^{(1)} a_2^{(3)} a_3^{(4)} a_4^{(6)}+ \cdots + a_1^{(6)} a_2^{(2)} a_3^{(3)} a_4^{(3)}.
\]
Under the convention used in our present paper, where the variable order is \( (a_4, a_3, a_2, a_1) \), this same polynomial is represented in the form shown above.

\medskip

Then, he claimed that $\varphi_4(q_{4,3}) = \overline{d_0} + \delta(\lambda_1\lambda_3\lambda_9\lambda_3 + \lambda_1\lambda^{2}_3\lambda_9)$. However, \textbf{this is fundamentally false}, since 
\[
z = \lambda_1\lambda_3\lambda_9\lambda_3 + \lambda_1\lambda^{2}_3\lambda_9 \not\in \Lambda_3.
\]
This is not merely a minor inaccuracy; the \textbf{serious error} lies in the use of the recursive function~\eqref{ctS}, as mentioned in Sum's paper~\cite{Sum}. More precisely, with the choice of representative element
\[
\overline{d_0} = \lambda_6 \lambda_2 \lambda^{2}_3 + \lambda^{2}_4 \lambda^{2}_3 + \lambda_2 \lambda_4 \lambda_5 \lambda_3 + \lambda_1 \lambda_5 \lambda_1 \lambda_7
\]
for \( d_0 \in \mathrm{Ext}_{\mathcal{A}}^{4,18}(\mathbb{Z}/2, \mathbb{Z}/2) \), as presented in~\cite{Sum}, there \textbf{does not exist} any
\[
x \in P_{\mathcal{A}} H_{14}(B(\mathbb{Z}/2)^4) \quad \text{and} \quad z \in \Lambda_3
\]
such that
\[
\varphi_4(x) + \delta(z) = y.
\]
We have also verified this rigorously by computer using our algorithm~\ref{alg:preimage}, in which we replaced the recursive function in Corollary~\ref{mdp} with the one~\eqref{ctS} used in \cite{Sum}. The results clearly confirm the issue described above.

\medskip

We now provide a detailed explanation of why the computation in \cite{Sum} is false, and why \textbf{the result is essentially irreparable}, both due to the use of the recursive function \eqref{ctS} and the specific choice of the representative element $\overline{d_0}$ as given in \cite{Sum}.

\medskip

First,  by applying \eqref{ctS} and reducing to the admissible form using the Adem relations \eqref{ct1}, we get
\begin{align*}
\varphi_4(q_{4, 3}) &= \lambda_1 \lambda_5^2 \lambda_3 + \lambda_1 \lambda_7 \lambda_3^2 + \lambda_2 \lambda_4 \lambda_5 \lambda_3 + \lambda_2 \lambda_8 \lambda_1 \lambda_3 \\
&+ \lambda_3^2 \lambda_5 \lambda_3 + \lambda_4^2 \lambda_3^2 + \lambda_4 \lambda_6 \lambda_1 \lambda_3 + \lambda_5 \lambda_1 \lambda_5 \lambda_3 \\
&+ \lambda_5 \lambda_3^3 + \lambda_5^2 \lambda_1 \lambda_3 + \lambda_6 \lambda_2 \lambda_3^2\\
&=\lambda_1 \lambda_5^2 \lambda_3 + \lambda_2 \lambda_3 \lambda_6 \lambda_3 + \lambda_2 \lambda_5 \lambda_4 \lambda_3 + \lambda_4 \lambda_3 \lambda_4 \lambda_3 + \lambda_5 \lambda_3^3\\
&= (\lambda_1 \lambda_3^2 \lambda_7 + \lambda_2 \lambda_4 \lambda_5 \lambda_3 + \lambda_4^2 \lambda_3^2 + \lambda_5 \lambda_3^3)\\
&+ (\lambda_1 \lambda_5^2 \lambda_3 + \lambda_2 \lambda_3 \lambda_6 \lambda_3 + \lambda_2 \lambda_5 \lambda_4 \lambda_3 + \lambda_4 \lambda_3 \lambda_4 \lambda_3 +  \lambda_1 \lambda_3^2 \lambda_7 + \lambda_2 \lambda_4 \lambda_5 \lambda_3 + \lambda_4^2 \lambda_3^2)
&= d^{*}_0 +R,
\end{align*}
where $d^{*}_0 = \lambda_1 \lambda_3^2 \lambda_7 + \lambda_2 \lambda_4 \lambda_5 \lambda_3 + \lambda_4^2 \lambda_3^2 + \lambda_5 \lambda_3^3$ and $R = \lambda_1 \lambda_5^2 \lambda_3 + \lambda_2 \lambda_3 \lambda_6 \lambda_3 + \lambda_2 \lambda_5 \lambda_4 \lambda_3 + \lambda_4 \lambda_3 \lambda_4 \lambda_3 +  \lambda_1 \lambda_3^2 \lambda_7 + \lambda_2 \lambda_4 \lambda_5 \lambda_3 + \lambda_4^2 \lambda_3^2.$

\medskip

To verify the above computational result, we provide the following simple algorithm implemented in \textsc{SageMath}, which enables the reader to easily check the calculation by hand.

\medskip

\begin{lstlisting}[language=Python, caption={\textbf{A SAGEMATH implementation for computing $\varphi_4(q_{4,3})$}}, label={lst:SAGEMATH_code}]
from collections import Counter
from functools import lru_cache
from typing import Tuple, Dict, List

@lru_cache(maxsize=None)
def binom2(n: int, k: int) -> int:
    if k < 0 or k > n: return 0
    return 1 if (k & n) == k else 0

@lru_cache(maxsize=None)
def sq_star(monomial: Tuple[int, ...], i: int) -> Dict[Tuple[int, ...], int]:
    if i < 0: return {}
    if not monomial: return {(): 1} if i == 0 else {}
    if i > sum(monomial): return {}
    t_k, m_prime = monomial[0], monomial[1:]
    result = Counter()
    for i_k in range(min(i, t_k) + 1):
        if binom2(t_k - i_k, i_k) == 0: continue
        new_tk = (t_k - i_k,) if t_k - i_k > 0 else ()
        sub_poly = sq_star(m_prime, i - i_k)
        for sub_mon, sub_coeff in sub_poly.items():
            result[new_tk + sub_mon] = (result[new_tk + sub_mon] + sub_coeff) % 2
    return {m: c for m, c in result.items() if c == 1}

@lru_cache(maxsize=None)
def phi(k: int, monomial: Tuple[int, ...]) -> Dict[Tuple[int, ...], int]:
    if k == 0: return {(): 1}
    padded_monomial = (0,) * (k - len(monomial)) + monomial
    total_result = Counter()
    t_k = padded_monomial[0]
    remaining_part = padded_monomial[1:]
    for i in range(t_k, sum(padded_monomial) + k * 4):
        j = i - t_k
        poly_from_sq = sq_star(remaining_part, j)
        if not poly_from_sq: continue
        for m_new in poly_from_sq:
            poly_from_phi = phi(k - 1, m_new)
            for lambda_exp in poly_from_phi:
                new_lambda_exp = lambda_exp + (i,)
                total_result[new_lambda_exp] = (total_result[new_lambda_exp] + 1) % 2
    return {exp: c for exp, c in total_result.items() if c == 1}

def reduce_lambda_poly(poly_dict: Dict[Tuple[int, ...], int]) -> Dict[Tuple[int, ...], int]:
    poly = Counter({tuple(t): c & 1 for t, c in poly_dict.items() if c & 1})
    while True:
        found_reduction = False
        for term in list(poly.keys()):
            if term not in poly: continue
            for i in range(len(term) - 1):
                s, t = term[i], term[i + 1]
                if s > 2 * t:
                    found_reduction = True
                    coeff = poly.pop(term)
                    prefix, suffix = term[:i], term[i + 2:]
                    for j in range(s + t + 1):
                        if binom2(j - t - 1, 2 * j - s):
                            new_term = prefix + (s + t - j, j) + suffix
                            poly[new_term] = (poly.get(new_term, 0) + coeff) % 2
                            if poly[new_term] == 0: del poly[new_term]
                    break
            if found_reduction: break
        if not found_reduction: break
    return {p: c for p, c in poly.items() if c == 1}

def format_lambda_poly(poly: Dict[Tuple[int, ...], int]) -> str:
    if not poly: return "0"
    sorted_terms = sorted(poly.keys(), key=lambda t: (sum(t), t))
    return " + ".join("".join(f"lambda_{i}" for i in term) if term else "1" for term in sorted_terms)

def clear_caches():
    binom2.cache_clear()
    sq_star.cache_clear()
    phi.cache_clear()

def compute_varphi_sum(k: int, monomials: List[Tuple[int, ...]]) -> Dict[Tuple[int, ...], int]:
    print(f"Computing phi_{k} for {len(monomials)} monomials...")
    clear_caches()
    total_poly = Counter()
    for i, mono in enumerate(monomials):
        if i > 0 and i % 10 == 0:
            print(f"  Processing monomial {i+1}/{len(monomials)}: {mono}")
        result = phi(k, mono)
        total_poly.update(result)
    unreduced = {p: c % 2 for p, c in total_poly.items() if c % 2 == 1}
    print(f"\nUnreduced polynomial has {len(unreduced)} terms. Applying Adem reduction...")
    reduced = reduce_lambda_poly(unreduced)
    print(f"Reduced polynomial has {len(reduced)} terms.")
    return unreduced, reduced

if __name__ == "__main__":
    print("\n" + "=" * 60)
    print("MAIN COMPUTATION")
    print("=" * 60)
    time_start = cputime()
    q_sample = [
    (6, 6, 1, 1), (6, 5, 2, 1), (6, 4, 3, 1), (6, 3, 4, 1),
    (6, 2, 5, 1), (6, 1, 6, 1), (5, 6, 1, 2), (5, 5, 2, 2),
    (5, 4, 3, 2), (5, 3, 4, 2), (5, 2, 5, 2), (5, 1, 6, 2),
    (5, 5, 1, 3), (3, 6, 2, 3), (6, 2, 3, 3), (6, 1, 4, 3),
    (5, 2, 4, 3), (3, 4, 4, 3), (3, 2, 6, 3), (3, 6, 1, 4),
    (3, 5, 2, 4), (3, 4, 3, 4), (3, 3, 4, 4), (3, 2, 5, 4),
    (3, 1, 6, 4), (5, 3, 1, 5), (6, 1, 2, 5), (5, 2, 2, 5),
    (3, 4, 2, 5), (5, 1, 3, 5), (3, 3, 3, 5), (3, 1, 5, 5),
    (6, 1, 1, 6), (5, 2, 1, 6), (3, 4, 1, 6), (3, 3, 2, 6)
    ]
    unreduced, reduced = compute_varphi_sum(4, q_sample)
    print(f"\nFinal unreduced result: varphi_4(sample) = {format_lambda_poly(unreduced)}")
    print(f"\nFinal reduced result: varphi_4(sample) = {format_lambda_poly(reduced)}")
    clear_caches()

    print("\n" + "=" * 80)
    print("ENTIRE COMPUTATION PROCESS COMPLETED")
    time_end = cputime()
    print(f"Total execution time: {time_end - time_start:.4f} seconds")
    print("=" * 80)
\end{lstlisting}

\medskip

The result produced by the above algorithm is as follows:

\medskip

\begin{lstlisting}
=============================================================================
MAIN COMPUTATION
=============================================================================
Computing phi_4 for 36 monomials...
  Processing monomial 11/36: (5, 2, 5, 2)
  Processing monomial 21/36: (3, 5, 2, 4)
  Processing monomial 31/36: (3, 3, 3, 5)

Unreduced polynomial has 11 terms. Applying Adem reduction...
Reduced polynomial has 5 terms.

Final unreduced result: varphi_4(sample) = lambda_1lambda_5lambda_5lambda_3 + lambda_1lambda_7lambda_3lambda_3 + lambda_2lambda_4lambda_5lambda_3 + lambda_2lambda_8lambda_1lambda_3 + lambda_3lambda_3lambda_5lambda_3 + lambda_4lambda_4lambda_3lambda_3 + lambda_4lambda_6lambda_1lambda_3 + lambda_5lambda_1lambda_5lambda_3 + lambda_5lambda_3lambda_3lambda_3 + lambda_5lambda_5lambda_1lambda_3 + lambda_6lambda_2lambda_3lambda_3

Final reduced result: varphi_4(sample) = lambda_1lambda_5lambda_5lambda_3 + lambda_2lambda_3lambda_6lambda_3 + lambda_2lambda_5lambda_4lambda_3 + lambda_4lambda_3lambda_4lambda_3 + lambda_5lambda_3lambda_3lambda_3

=============================================================================
ENTIRE COMPUTATION PROCESS COMPLETED
Total execution time: 0.0424 seconds
=============================================================================
\end{lstlisting}

\medskip

Note that in \cite{Sum}, the monomial 
\begin{align*}
 \overline{d_0} = \lambda_6 \lambda_2 \lambda^{2}_3 + \lambda^{2}_4\lambda^{2}_3 + \lambda_2\lambda_4\lambda_5\lambda_3 + \lambda_1\lambda_5\lambda_1\lambda_7 = d^{*}_0 
\end{align*}
is a representative of $d_0\in \Ext_{\A}^{4,18}(\mathbb{Z}/2, \mathbb{Z}/2).$ However, the polynomial $R$ is not a cocycle in $\Lambda.$ Indeed, we need to show that $\delta(R) \neq 0.$ By a direct computation from \eqref{ct2}, we obtain:

\begin{align*}
\delta(R) = &\lambda_{4} \lambda_{3} \lambda_{2} \lambda_{1} \lambda_{3} 
+ \lambda_{4} \lambda_{2} \lambda_{1} \lambda_{3}^2 
+ \lambda_{3} \lambda_{2}^2 \lambda_{3}^2 
+ \lambda_{2} \lambda_{1} \lambda_{4} \lambda_{3}^2 \\
&+ \lambda_{1} \lambda_{2} \lambda_{4} \lambda_{3}^2 
+ \lambda_{2} \lambda_{1} \lambda_{3} \lambda_{4} \lambda_{3} 
+ \lambda_{1} \lambda_{2} \lambda_{3} \lambda_{4} \lambda_{3} 
+ \lambda_{2}^2 \lambda_{1} \lambda_{5} \lambda_{3} 
+ \lambda_{2} \lambda_{1} \lambda_{2} \lambda_{5} \lambda_{3}
\end{align*}
This implies that $\delta(R) \neq 0$, and therefore there does not exist any $z \in \Lambda_3$ such that $\delta(z) = R.$ Thus, 
$$ \varphi_4(q_{4,3}) + \delta(z)\neq y.$$

\medskip

In fact, throughout the entire paper \cite{Sum}, the author also arbitrarily considers similar elements without explaining their origin or justification.

\medskip

In the case where $R$ is a cocycle, then after carrying out the manual computation and obtaining the result
\[
\varphi_4(q_{4,3}) = y + R,
\]
where
\[
R = \lambda_{5}\lambda^{3}_{3}
  + \lambda^{2}_{3}\lambda_{5}\lambda_{3} 
  + \lambda_{1}\lambda^{2}_{5}\lambda_{3},
\]
\textbf{it would still be extremely complicated and arguably infeasible to determine an element \( z\in \Lambda_3 \) such that \( \delta(z) = R\in \Lambda_4 \), purely by hand}. Of course, such an element \( z \) satisfying \( \delta(z) = R \) does not always exist, since for this to happen, \( R \) must first be a cocycle in \( \Lambda_4 \), i.e., \( \delta(R) = 0 \). It is also note that this is a \textbf{necessary but not sufficient} condition. The condition is \textbf{necessary}, as applying the differential to both sides of the equation $\delta(z) = R$ yields $\delta(R) = \delta(\delta(z)) = 0$. However, the condition is \textbf{not sufficient}, because even if $\delta(R)=0$ (meaning $R$ is a cocycle), there is no guarantee that it is also a coboundary, that is, that an element $z$ exists such that $\delta(z)=R$. If no such $z$ exists, then $R$ represents a non-trivial cohomology class.

This is precisely why we make such a claim, and in what follows, we will explain the reasoning behind it in detail.

The problem $\delta(z) = R$ is, in fact, a system of linear equations over the field $\Z/2$ (the field with only two elements, 0 and 1).

\begin{itemize}
    \item \textbf{Unknowns:} The unknowns are the coefficients of $z$. We write $z$ as a general polynomial $z = \sum c_i \lambda_{J_i}$, where $\lambda_{J_i}$ are the basis monomials and the $c_i$ are the unknowns (equal to 0 or 1) that we need to find.
    \item \textbf{Equations:} These are established by comparing the coefficients of each basis monomial on both sides of the equation.
\end{itemize}

Although it is theoretically just a matter of solving a system of linear equations, manual computation becomes nearly impossible for two main reasons:

\begin{itemize}
    \item \textbf{The enormous number of variables and equations:} The number of basis monomials in the Lambda algebra grows explosively with the degree. For elements of even moderate degree, the number of unknowns $c_i$ (and the corresponding number of equations) can reach into the hundreds, thousands, or even more.
    \item \textbf{The complexity of the differential $\delta$:} Computing $\delta$ for a long monomial requires repeated applications of the Leibniz rule and the use of the formula for the differential of each $\lambda_n$, which is itself a complex sum derived from the Adem relations. This process is very time-consuming and extremely error-prone.
\end{itemize}

If one were forced to compute this by hand, the following steps would have to be taken, which are also the steps implemented by the computer algorithm:

\subsubsection*{Step 1: Determine the spaces and unknowns}
\begin{itemize}
    \item Identify the target element $R \in \Lambda_k$ and its degree.
    \item The desired element $z$ will lie in the space $\Lambda_{k-1}$ and will have a degree one greater than the degree of $R$.
    \item List all basis monomials $\{\lambda_{J_1}, \lambda_{J_2}, \dots, \lambda_{J_m}\}$ in the space where $z$ lives.
    \item Write $z$ in its general form: $z = c_1\lambda_{J_1} + c_2\lambda_{J_2} + \dots + c_m\lambda_{J_m}$. The $c_i$ are the unknowns.
\end{itemize}

\subsubsection*{Step 2: Compute the differential of each basis element}
\begin{itemize}
    \item For each basis monomial $\lambda_{J_i}$, manually compute its differential $\delta(\lambda_{J_i})$.
    \item This is the most laborious step, requiring the use of the Leibniz rule and the formula $\delta(\lambda_n) = \sum_{j} \binom{n-j-1}{j+1}\lambda_{n-j-1}\lambda_j$. Each result is a polynomial in $\Lambda_k$.
\end{itemize}

\subsubsection*{Step 3: Set up the system of linear equations}
\begin{itemize}
    \item The full equation is: $c_1\delta(\lambda_{J_1}) + c_2\delta(\lambda_{J_2}) + \dots + c_m\delta(\lambda_{J_m}) = R$.
    \item List all basis monomials $\{\lambda_{K_1}, \dots, \lambda_{K_p}\}$ that appear on the left or right-hand side. Each $\lambda_{K_j}$ will correspond to one equation.
    \item For each $\lambda_{K_j}$, compare its coefficient on both sides to generate a linear equation for the unknowns $c_i$.
\end{itemize}

\subsubsection*{Step 4: Solve the system of equations}
\begin{itemize}
    \item Write the system of equations in matrix form and solve it using Gaussian elimination over the field $\Z/2$.
    \item If the system has a solution, one will find the values (0 or 1) for the unknowns $c_i$.
    \item Substitute the found values of $c_i$ back into the expression for $z$ from Step 1 to obtain the final result.
\end{itemize}

Thus, the problem mentioned above serves as a prime example of why computational algebra tools are essential: they transform a massive and error-prone manual task into one that a computer can handle quickly and accurately. We would also like to emphasize that performing explicit computations to precisely determine elements in \( P_{\A} H_*(B(\mathbb{Z}/2)^k) \) is a highly nontrivial task when carried out by hand.


\end{remark}

$\bullet$ \underline{\textbf{The indecomposable element $p_0$:}}

Based on \cite{Lin}, we can see that the element $$y = \lambda^{2}_7 \lambda_5 \lambda_{14} + \lambda^{2}_7 \lambda_9\lambda_{10} + \lambda_7 \lambda_{11}\lambda_9\lambda_6$$ 
is a representative of $p_0 \in \Ext_{\A}^{4,37}(\mathbb{Z}/2, \mathbb{Z}/2)$.  

By applying Algorithm~\ref{alg:preimage}, we obtain $z =0$ and
\[
\begin{aligned}
x = & a_4^{(14)} a_3^{(5)} a_2^{(7)} a_1^{(7)} + a_4^{(14)} a_3^{(3)} a_2^{(9)} a_1^{(7)} + a_4^{(14)} a_3^{(3)} a_2^{(5)} a_1^{(11)} + a_4^{(14)} a_3^{(3)} a_2^{(3)} a_1^{(13)} \\
& + a_4^{(13)} a_3^{(6)} a_2^{(7)} a_1^{(7)} + a_4^{(13)} a_3^{(3)} a_2^{(10)} a_1^{(7)} + a_4^{(13)} a_3^{(3)} a_2^{(6)} a_1^{(11)} + a_4^{(13)} a_3^{(3)} a_2^{(3)} a_1^{(14)} \\
& + a_4^{(11)} a_3^{(6)} a_2^{(9)} a_1^{(7)} + a_4^{(11)} a_3^{(6)} a_2^{(5)} a_1^{(11)} + a_4^{(11)} a_3^{(6)} a_2^{(3)} a_1^{(13)} + a_4^{(11)} a_3^{(5)} a_2^{(10)} a_1^{(7)} \\
& + a_4^{(11)} a_3^{(5)} a_2^{(6)} a_1^{(11)} + a_4^{(11)} a_3^{(5)} a_2^{(3)} a_1^{(14)} + a_4^{(10)} a_3^{(9)} a_2^{(7)} a_1^{(7)} + a_4^{(10)} a_3^{(7)} a_2^{(9)} a_1^{(7)} \\
& + a_4^{(10)} a_3^{(7)} a_2^{(5)} a_1^{(11)} + a_4^{(10)} a_3^{(7)} a_2^{(3)} a_1^{(13)} + a_4^{(10)} a_3^{(5)} a_2^{(11)} a_1^{(7)} + a_4^{(10)} a_3^{(3)} a_2^{(13)} a_1^{(7)} \\
& + a_4^{(9)} a_3^{(10)} a_2^{(7)} a_1^{(7)} + a_4^{(9)} a_3^{(7)} a_2^{(10)} a_1^{(7)} + a_4^{(9)} a_3^{(7)} a_2^{(6)} a_1^{(11)} + a_4^{(9)} a_3^{(7)} a_2^{(3)} a_1^{(14)} \\
& + a_4^{(9)} a_3^{(6)} a_2^{(11)} a_1^{(7)} + a_4^{(9)} a_3^{(3)} a_2^{(14)} a_1^{(7)} + a_4^{(7)} a_3^{(10)} a_2^{(9)} a_1^{(7)} + a_4^{(7)} a_3^{(10)} a_2^{(5)} a_1^{(11)} \\
& + a_4^{(7)} a_3^{(10)} a_2^{(3)} a_1^{(13)} + a_4^{(7)} a_3^{(9)} a_2^{(10)} a_1^{(7)} + a_4^{(7)} a_3^{(9)} a_2^{(6)} a_1^{(11)} + a_4^{(7)} a_3^{(9)} a_2^{(3)} a_1^{(14)} \\
& + a_4^{(7)} a_3^{(6)} a_2^{(13)} a_1^{(7)} + a_4^{(7)} a_3^{(5)} a_2^{(14)} a_1^{(7)} + a_4^{(6)} a_3^{(13)} a_2^{(7)} a_1^{(7)} + a_4^{(6)} a_3^{(11)} a_2^{(9)} a_1^{(7)} \\
& + a_4^{(6)} a_3^{(11)} a_2^{(5)} a_1^{(11)} + a_4^{(6)} a_3^{(11)} a_2^{(3)} a_1^{(13)} + a_4^{(6)} a_3^{(9)} a_2^{(7)} a_1^{(11)} + a_4^{(6)} a_3^{(7)} a_2^{(7)} a_1^{(13)} \\
& + a_4^{(6)} a_3^{(5)} a_2^{(11)} a_1^{(11)} + a_4^{(6)} a_3^{(3)} a_2^{(13)} a_1^{(11)} + a_4^{(5)} a_3^{(14)} a_2^{(7)} a_1^{(7)} + a_4^{(5)} a_3^{(11)} a_2^{(10)} a_1^{(7)} \\
& + a_4^{(5)} a_3^{(11)} a_2^{(6)} a_1^{(11)} + a_4^{(5)} a_3^{(11)} a_2^{(3)} a_1^{(14)} + a_4^{(5)} a_3^{(10)} a_2^{(7)} a_1^{(11)} + a_4^{(5)} a_3^{(7)} a_2^{(7)} a_1^{(14)} \\
& + a_4^{(5)} a_3^{(6)} a_2^{(11)} a_1^{(11)} + a_4^{(5)} a_3^{(3)} a_2^{(14)} a_1^{(11)} + a_4^{(3)} a_3^{(14)} a_2^{(9)} a_1^{(7)} + a_4^{(3)} a_3^{(14)} a_2^{(5)} a_1^{(11)} \\
& + a_4^{(3)} a_3^{(14)} a_2^{(3)} a_1^{(13)} + a_4^{(3)} a_3^{(13)} a_2^{(10)} a_1^{(7)} + a_4^{(3)} a_3^{(13)} a_2^{(6)} a_1^{(11)} + a_4^{(3)} a_3^{(13)} a_2^{(3)} a_1^{(14)} \\
& + a_4^{(3)} a_3^{(10)} a_2^{(7)} a_1^{(13)} + a_4^{(3)} a_3^{(9)} a_2^{(7)} a_1^{(14)} + a_4^{(3)} a_3^{(6)} a_2^{(11)} a_1^{(13)} + a_4^{(3)} a_3^{(5)} a_2^{(11)} a_1^{(14)} \\
& + a_4^{(3)} a_3^{(3)} a_2^{(14)} a_1^{(13)} + a_4^{(3)} a_3^{(3)} a_2^{(13)} a_1^{(14)}.
\end{aligned}
\]

\medskip

We can verify this directly through computation as follows:

\medskip

$\bullet$ Due to the instability condition, it suffices to check whether $x \in P_{\A} H_{33}(B(\mathbb{Z}/2)^4)$ is annihilated by the dual Steenrod operations $Sq^{2^{s}}_*$ for $0\leq s\leq 3.$

\[
\begin{aligned}
(x)Sq^1_*=& a_4^{(13)} a_3^{(5)} a_2^{(7)} a_1^{(7)} + a_4^{(13)} a_3^{(3)} a_2^{(9)} a_1^{(7)} + a_4^{(13)} a_3^{(3)} a_2^{(5)} a_1^{(11)} + a_4^{(13)} a_3^{(3)} a_2^{(3)} a_1^{(13)} \\
& + a_4^{(13)} a_3^{(5)} a_2^{(7)} a_1^{(7)} + a_4^{(13)} a_3^{(3)} a_2^{(9)} a_1^{(7)} + a_4^{(13)} a_3^{(3)} a_2^{(5)} a_1^{(11)} + a_4^{(13)} a_3^{(3)} a_2^{(3)} a_1^{(13)} \\
& + a_4^{(11)} a_3^{(5)} a_2^{(9)} a_1^{(7)} + a_4^{(11)} a_3^{(5)} a_2^{(5)} a_1^{(11)} + a_4^{(11)} a_3^{(5)} a_2^{(3)} a_1^{(13)} + a_4^{(11)} a_3^{(5)} a_2^{(9)} a_1^{(7)} \\
& + a_4^{(11)} a_3^{(5)} a_2^{(5)} a_1^{(11)} + a_4^{(11)} a_3^{(5)} a_2^{(3)} a_1^{(13)} + a_4^{(9)} a_3^{(9)} a_2^{(7)} a_1^{(7)} + a_4^{(9)} a_3^{(7)} a_2^{(9)} a_1^{(7)} \\
& + a_4^{(9)} a_3^{(7)} a_2^{(5)} a_1^{(11)} + a_4^{(9)} a_3^{(7)} a_2^{(3)} a_1^{(13)} + a_4^{(9)} a_3^{(5)} a_2^{(11)} a_1^{(7)} + a_4^{(9)} a_3^{(3)} a_2^{(13)} a_1^{(7)} \\
& + a_4^{(9)} a_3^{(9)} a_2^{(7)} a_1^{(7)} + a_4^{(9)} a_3^{(7)} a_2^{(9)} a_1^{(7)} + a_4^{(9)} a_3^{(7)} a_2^{(5)} a_1^{(11)} + a_4^{(9)} a_3^{(7)} a_2^{(3)} a_1^{(13)} \\
& + a_4^{(9)} a_3^{(5)} a_2^{(11)} a_1^{(7)} + a_4^{(9)} a_3^{(3)} a_2^{(13)} a_1^{(7)} + a_4^{(7)} a_3^{(9)} a_2^{(9)} a_1^{(7)} + a_4^{(7)} a_3^{(9)} a_2^{(5)} a_1^{(11)} \\
& + a_4^{(7)} a_3^{(9)} a_2^{(3)} a_1^{(13)} + a_4^{(7)} a_3^{(9)} a_2^{(9)} a_1^{(7)} + a_4^{(7)} a_3^{(9)} a_2^{(5)} a_1^{(11)} + a_4^{(7)} a_3^{(9)} a_2^{(3)} a_1^{(13)} \\
& + a_4^{(7)} a_3^{(5)} a_2^{(13)} a_1^{(7)} + a_4^{(7)} a_3^{(5)} a_2^{(13)} a_1^{(7)} + a_4^{(5)} a_3^{(13)} a_2^{(7)} a_1^{(7)} + a_4^{(5)} a_3^{(11)} a_2^{(9)} a_1^{(7)} \\
& + a_4^{(5)} a_3^{(11)} a_2^{(5)} a_1^{(11)} + a_4^{(5)} a_3^{(11)} a_2^{(3)} a_1^{(13)} + a_4^{(5)} a_3^{(9)} a_2^{(7)} a_1^{(11)} + a_4^{(5)} a_3^{(7)} a_2^{(7)} a_1^{(13)} \\
& + a_4^{(5)} a_3^{(5)} a_2^{(11)} a_1^{(11)} + a_4^{(5)} a_3^{(3)} a_2^{(13)} a_1^{(11)} + a_4^{(5)} a_3^{(13)} a_2^{(7)} a_1^{(7)} + a_4^{(5)} a_3^{(11)} a_2^{(9)} a_1^{(7)} \\
& + a_4^{(5)} a_3^{(11)} a_2^{(5)} a_1^{(11)} + a_4^{(5)} a_3^{(11)} a_2^{(3)} a_1^{(13)} + a_4^{(5)} a_3^{(9)} a_2^{(7)} a_1^{(11)} + a_4^{(5)} a_3^{(7)} a_2^{(7)} a_1^{(13)} \\
& + a_4^{(5)} a_3^{(5)} a_2^{(11)} a_1^{(11)} + a_4^{(5)} a_3^{(3)} a_2^{(13)} a_1^{(11)} + a_4^{(3)} a_3^{(13)} a_2^{(9)} a_1^{(7)} + a_4^{(3)} a_3^{(13)} a_2^{(5)} a_1^{(11)} \\
& + a_4^{(3)} a_3^{(13)} a_2^{(3)} a_1^{(13)} + a_4^{(3)} a_3^{(13)} a_2^{(9)} a_1^{(7)} + a_4^{(3)} a_3^{(13)} a_2^{(5)} a_1^{(11)} + a_4^{(3)} a_3^{(13)} a_2^{(3)} a_1^{(13)} \\
& + a_4^{(3)} a_3^{(9)} a_2^{(7)} a_1^{(13)} + a_4^{(3)} a_3^{(9)} a_2^{(7)} a_1^{(13)} + a_4^{(3)} a_3^{(5)} a_2^{(11)} a_1^{(13)} + a_4^{(3)} a_3^{(5)} a_2^{(11)} a_1^{(13)} \\
& + a_4^{(3)} a_3^{(3)} a_2^{(13)} a_1^{(13)} + a_4^{(3)} a_3^{(3)} a_2^{(13)} a_1^{(13)} = 0;
\end{aligned}
\]

\[
\begin{aligned}
(x)Sq^2_*=& a_4^{(14)} a_3^{(3)} a_2^{(7)} a_1^{(7)} + a_4^{(14)} a_3^{(3)} a_2^{(7)} a_1^{(7)} + a_4^{(14)} a_3^{(3)} a_2^{(3)} a_1^{(11)} + a_4^{(14)} a_3^{(3)} a_2^{(3)} a_1^{(11)} \\
& + a_4^{(11)} a_3^{(6)} a_2^{(7)} a_1^{(7)} + a_4^{(11)} a_3^{(3)} a_2^{(10)} a_1^{(7)} + a_4^{(11)} a_3^{(3)} a_2^{(6)} a_1^{(11)} + a_4^{(11)} a_3^{(3)} a_2^{(3)} a_1^{(14)} \\
& + a_4^{(11)} a_3^{(6)} a_2^{(7)} a_1^{(7)} + a_4^{(11)} a_3^{(6)} a_2^{(3)} a_1^{(11)} + a_4^{(11)} a_3^{(6)} a_2^{(3)} a_1^{(11)} + a_4^{(11)} a_3^{(3)} a_2^{(10)} a_1^{(7)} \\
& + a_4^{(11)} a_3^{(3)} a_2^{(6)} a_1^{(11)} + a_4^{(11)} a_3^{(3)} a_2^{(3)} a_1^{(14)} + a_4^{(10)} a_3^{(7)} a_2^{(7)} a_1^{(7)} + a_4^{(10)} a_3^{(7)} a_2^{(7)} a_1^{(7)} \\
& + a_4^{(10)} a_3^{(7)} a_2^{(3)} a_1^{(11)} + a_4^{(10)} a_3^{(7)} a_2^{(3)} a_1^{(11)} + a_4^{(10)} a_3^{(3)} a_2^{(11)} a_1^{(7)} + a_4^{(10)} a_3^{(3)} a_2^{(11)} a_1^{(7)} \\
& + a_4^{(7)} a_3^{(10)} a_2^{(7)} a_1^{(7)} + a_4^{(7)} a_3^{(7)} a_2^{(10)} a_1^{(7)} + a_4^{(7)} a_3^{(7)} a_2^{(6)} a_1^{(11)} + a_4^{(7)} a_3^{(7)} a_2^{(3)} a_1^{(14)} \\
& + a_4^{(7)} a_3^{(6)} a_2^{(11)} a_1^{(7)} + a_4^{(7)} a_3^{(3)} a_2^{(14)} a_1^{(7)} + a_4^{(7)} a_3^{(10)} a_2^{(7)} a_1^{(7)} + a_4^{(7)} a_3^{(10)} a_2^{(3)} a_1^{(11)} \\
& + a_4^{(7)} a_3^{(10)} a_2^{(3)} a_1^{(11)} + a_4^{(7)} a_3^{(7)} a_2^{(10)} a_1^{(7)} + a_4^{(7)} a_3^{(7)} a_2^{(6)} a_1^{(11)} + a_4^{(7)} a_3^{(7)} a_2^{(3)} a_1^{(14)} \\
& + a_4^{(7)} a_3^{(6)} a_2^{(11)} a_1^{(7)} + a_4^{(7)} a_3^{(3)} a_2^{(14)} a_1^{(7)} + a_4^{(6)} a_3^{(11)} a_2^{(7)} a_1^{(7)} + a_4^{(6)} a_3^{(11)} a_2^{(7)} a_1^{(7)} \\
& + a_4^{(6)} a_3^{(11)} a_2^{(3)} a_1^{(11)} + a_4^{(6)} a_3^{(11)} a_2^{(3)} a_1^{(11)} + a_4^{(6)} a_3^{(7)} a_2^{(7)} a_1^{(11)} + a_4^{(6)} a_3^{(7)} a_2^{(7)} a_1^{(11)} \\
& + a_4^{(6)} a_3^{(3)} a_2^{(11)} a_1^{(11)} + a_4^{(6)} a_3^{(3)} a_2^{(11)} a_1^{(11)} + a_4^{(3)} a_3^{(14)} a_2^{(7)} a_1^{(7)} + a_4^{(3)} a_3^{(11)} a_2^{(10)} a_1^{(7)} \\
& + a_4^{(3)} a_3^{(11)} a_2^{(6)} a_1^{(11)} + a_4^{(3)} a_3^{(11)} a_2^{(3)} a_1^{(14)} + a_4^{(3)} a_3^{(10)} a_2^{(7)} a_1^{(11)} + a_4^{(3)} a_3^{(7)} a_2^{(7)} a_1^{(14)} \\
& + a_4^{(3)} a_3^{(6)} a_2^{(11)} a_1^{(11)} + a_4^{(3)} a_3^{(3)} a_2^{(14)} a_1^{(11)} + a_4^{(3)} a_3^{(14)} a_2^{(7)} a_1^{(7)} + a_4^{(3)} a_3^{(14)} a_2^{(3)} a_1^{(11)} \\
& + a_4^{(3)} a_3^{(14)} a_2^{(3)} a_1^{(11)} + a_4^{(3)} a_3^{(11)} a_2^{(10)} a_1^{(7)} + a_4^{(3)} a_3^{(11)} a_2^{(6)} a_1^{(11)} + a_4^{(3)} a_3^{(11)} a_2^{(3)} a_1^{(14)} \\
& + a_4^{(3)} a_3^{(10)} a_2^{(7)} a_1^{(11)} + a_4^{(3)} a_3^{(7)} a_2^{(7)} a_1^{(14)} + a_4^{(3)} a_3^{(6)} a_2^{(11)} a_1^{(11)} + a_4^{(3)} a_3^{(3)} a_2^{(11)} a_1^{(14)} \\
& + a_4^{(3)} a_3^{(3)} a_2^{(14)} a_1^{(11)} + a_4^{(3)} a_3^{(3)} a_2^{(11)} a_1^{(14)} = 0;
\end{aligned}
\]

\[
\begin{aligned}
(x)Sq^4_*=& a_4^{(14)} a_3^{(3)} a_2^{(5)} a_1^{(7)} + a_4^{(14)} a_3^{(3)} a_2^{(5)} a_1^{(7)} + a_4^{(13)} a_3^{(3)} a_2^{(6)} a_1^{(7)} + a_4^{(13)} a_3^{(3)} a_2^{(6)} a_1^{(7)} \\
& + a_4^{(11)} a_3^{(6)} a_2^{(5)} a_1^{(7)} + a_4^{(7)} a_3^{(6)} a_2^{(9)} a_1^{(7)} + a_4^{(11)} a_3^{(6)} a_2^{(5)} a_1^{(7)} + a_4^{(7)} a_3^{(6)} a_2^{(5)} a_1^{(11)} \\
& + a_4^{(7)} a_3^{(6)} a_2^{(3)} a_1^{(13)} + a_4^{(11)} a_3^{(5)} a_2^{(6)} a_1^{(7)} + a_4^{(7)} a_3^{(5)} a_2^{(10)} a_1^{(7)} + a_4^{(11)} a_3^{(5)} a_2^{(6)} a_1^{(7)} \\
& + a_4^{(7)} a_3^{(5)} a_2^{(6)} a_1^{(11)} + a_4^{(7)} a_3^{(5)} a_2^{(3)} a_1^{(14)} + a_4^{(10)} a_3^{(5)} a_2^{(7)} a_1^{(7)} + a_4^{(6)} a_3^{(9)} a_2^{(7)} a_1^{(7)} \\
& + a_4^{(10)} a_3^{(7)} a_2^{(5)} a_1^{(7)} + a_4^{(6)} a_3^{(7)} a_2^{(9)} a_1^{(7)} + a_4^{(10)} a_3^{(7)} a_2^{(5)} a_1^{(7)} + a_4^{(6)} a_3^{(7)} a_2^{(5)} a_1^{(11)} \\
& + a_4^{(6)} a_3^{(7)} a_2^{(3)} a_1^{(13)} + a_4^{(10)} a_3^{(5)} a_2^{(7)} a_1^{(7)} + a_4^{(6)} a_3^{(5)} a_2^{(11)} a_1^{(7)} + a_4^{(6)} a_3^{(3)} a_2^{(13)} a_1^{(7)} \\
& + a_4^{(9)} a_3^{(6)} a_2^{(7)} a_1^{(7)} + a_4^{(5)} a_3^{(10)} a_2^{(7)} a_1^{(7)} + a_4^{(9)} a_3^{(7)} a_2^{(6)} a_1^{(7)} + a_4^{(5)} a_3^{(7)} a_2^{(10)} a_1^{(7)} \\
& + a_4^{(9)} a_3^{(7)} a_2^{(6)} a_1^{(7)} + a_4^{(5)} a_3^{(7)} a_2^{(6)} a_1^{(11)} + a_4^{(5)} a_3^{(7)} a_2^{(3)} a_1^{(14)} + a_4^{(9)} a_3^{(6)} a_2^{(7)} a_1^{(7)} \\
& + a_4^{(5)} a_3^{(6)} a_2^{(11)} a_1^{(7)} + a_4^{(5)} a_3^{(3)} a_2^{(14)} a_1^{(7)} + a_4^{(7)} a_3^{(10)} a_2^{(5)} a_1^{(7)} + a_4^{(7)} a_3^{(6)} a_2^{(9)} a_1^{(7)} \\
& + a_4^{(7)} a_3^{(10)} a_2^{(5)} a_1^{(7)} + a_4^{(7)} a_3^{(6)} a_2^{(5)} a_1^{(11)} + a_4^{(7)} a_3^{(6)} a_2^{(3)} a_1^{(13)} + a_4^{(7)} a_3^{(9)} a_2^{(6)} a_1^{(7)} \\
& + a_4^{(7)} a_3^{(5)} a_2^{(10)} a_1^{(7)} + a_4^{(7)} a_3^{(9)} a_2^{(6)} a_1^{(7)} + a_4^{(7)} a_3^{(5)} a_2^{(6)} a_1^{(11)} + a_4^{(7)} a_3^{(5)} a_2^{(3)} a_1^{(14)} \\
& + a_4^{(6)} a_3^{(11)} a_2^{(5)} a_1^{(7)} + a_4^{(6)} a_3^{(7)} a_2^{(9)} a_1^{(7)} + a_4^{(6)} a_3^{(11)} a_2^{(5)} a_1^{(7)} + a_4^{(6)} a_3^{(7)} a_2^{(5)} a_1^{(11)} \\
& + a_4^{(6)} a_3^{(7)} a_2^{(3)} a_1^{(13)} + a_4^{(6)} a_3^{(9)} a_2^{(7)} a_1^{(7)} + a_4^{(6)} a_3^{(5)} a_2^{(7)} a_1^{(11)} + a_4^{(6)} a_3^{(5)} a_2^{(11)} a_1^{(7)} \\
& + a_4^{(6)} a_3^{(5)} a_2^{(7)} a_1^{(11)} + a_4^{(6)} a_3^{(3)} a_2^{(13)} a_1^{(7)} + a_4^{(5)} a_3^{(11)} a_2^{(6)} a_1^{(7)} + a_4^{(5)} a_3^{(7)} a_2^{(10)} a_1^{(7)} \\
& + a_4^{(5)} a_3^{(11)} a_2^{(6)} a_1^{(7)} + a_4^{(5)} a_3^{(7)} a_2^{(6)} a_1^{(11)} + a_4^{(5)} a_3^{(7)} a_2^{(3)} a_1^{(14)} + a_4^{(5)} a_3^{(10)} a_2^{(7)} a_1^{(7)} \\
& + a_4^{(5)} a_3^{(6)} a_2^{(7)} a_1^{(11)} + a_4^{(5)} a_3^{(6)} a_2^{(11)} a_1^{(7)} + a_4^{(5)} a_3^{(6)} a_2^{(7)} a_1^{(11)} + a_4^{(5)} a_3^{(3)} a_2^{(14)} a_1^{(7)} \\
& + a_4^{(3)} a_3^{(14)} a_2^{(5)} a_1^{(7)} + a_4^{(3)} a_3^{(14)} a_2^{(5)} a_1^{(7)} + a_4^{(3)} a_3^{(13)} a_2^{(6)} a_1^{(7)} + a_4^{(3)} a_3^{(13)} a_2^{(6)} a_1^{(7)} \\
& + a_4^{(3)} a_3^{(6)} a_2^{(7)} a_1^{(13)} + a_4^{(3)} a_3^{(5)} a_2^{(7)} a_1^{(14)} + a_4^{(3)} a_3^{(6)} a_2^{(7)} a_1^{(13)} + a_4^{(3)} a_3^{(5)} a_2^{(7)} a_1^{(14)}=0;
\end{aligned}
\]

\[
\begin{aligned}
(x)Sq^8_*=& a_4^{(7)} a_3^{(6)} a_2^{(5)} a_1^{(7)} + a_4^{(7)} a_3^{(6)} a_2^{(5)} a_1^{(7)} + a_4^{(7)} a_3^{(5)} a_2^{(6)} a_1^{(7)} + a_4^{(7)} a_3^{(5)} a_2^{(6)} a_1^{(7)} \\
& + a_4^{(6)} a_3^{(5)} a_2^{(7)} a_1^{(7)} + a_4^{(6)} a_3^{(7)} a_2^{(5)} a_1^{(7)} + a_4^{(6)} a_3^{(7)} a_2^{(5)} a_1^{(7)} + a_4^{(6)} a_3^{(5)} a_2^{(7)} a_1^{(7)} \\
& + a_4^{(5)} a_3^{(6)} a_2^{(7)} a_1^{(7)} + a_4^{(5)} a_3^{(7)} a_2^{(6)} a_1^{(7)} + a_4^{(5)} a_3^{(7)} a_2^{(6)} a_1^{(7)} + a_4^{(5)} a_3^{(6)} a_2^{(7)} a_1^{(7)} \\
& + a_4^{(7)} a_3^{(6)} a_2^{(5)} a_1^{(7)} + a_4^{(7)} a_3^{(6)} a_2^{(5)} a_1^{(7)} + a_4^{(7)} a_3^{(5)} a_2^{(6)} a_1^{(7)} + a_4^{(7)} a_3^{(5)} a_2^{(6)} a_1^{(7)} \\
& + a_4^{(6)} a_3^{(7)} a_2^{(5)} a_1^{(7)} + a_4^{(6)} a_3^{(7)} a_2^{(5)} a_1^{(7)} + a_4^{(6)} a_3^{(5)} a_2^{(7)} a_1^{(7)} + a_4^{(6)} a_3^{(5)} a_2^{(7)} a_1^{(7)} \\
& + a_4^{(5)} a_3^{(7)} a_2^{(6)} a_1^{(7)} + a_4^{(5)} a_3^{(7)} a_2^{(6)} a_1^{(7)} + a_4^{(5)} a_3^{(6)} a_2^{(7)} a_1^{(7)} + a_4^{(5)} a_3^{(6)} a_2^{(7)} a_1^{(7)}=0.
\end{aligned}
\]

Using Corollary~\eqref{mdp} in the case $k =4,$ we derive:
$$ \varphi_4(x) = \lambda^{2}_7 \lambda_5 \lambda_{14} + \lambda^{2}_7 \lambda_9\lambda_{10} + \lambda_7 \lambda_{11}\lambda_9\lambda_6 = y,$$
which implies that $[\varphi_4(x)] = [y] = p_0.$ Thus, $p_0$ is in the image of the Singer transfer $\varphi_4.$  It should be noted that Hung and Quynh's work \cite{HQ} did not provide an explicit description of the preimage of $p_0.$

\section{An algorithmic computation of invariant spaces}\label{s4}
\label{sec:invariant_computation}

\subsection{Motivation: The central role of invariant theory}

We define $\mathcal{P}_k$ to be the polynomial algebra in $k$ variables over the field $\Z/2$:
\[
\mathcal{P}_k := \Z/2[x_1, x_2, \dots, x_k].
\]
This is a graded algebra where each variable $x_i$ is assigned degree one, i.e., $\deg(x_i) = 1$. Topologically, $\mathcal{P}_k$ is isomorphic to the mod-2 cohomology ring of the classifying space of the elementary abelian 2-group of rank $k$, which is the $k$-fold product of infinite-dimensional real projective space. This algebra is endowed with a rich structure as a module over the mod-2 Steenrod algebra, $\A$. The action of the Steenrod squares, $Sq^i \in \A$ for $i \ge 0$, on $\mathcal{P}_k$ is uniquely determined by the following properties:
\begin{enumerate}
    \item[$-$] The action on the generators is given by $Sq^0(x_j) = x_j$, $Sq^1(x_j) = x_j^2$, and $Sq^i(x_j) = 0$ for $i > 1$;
     \item[$-$] The action on products is governed by the Cartan formula: for any two polynomials $f, g \in \mathcal{P}_k$,
    \[
    Sq^n(fg) = \sum_{i=0}^{n} Sq^i(f)Sq^{n-i}(g).
    \]
\end{enumerate}

The algebra $\mathcal{P}_k$ is a canonical example of an \textbf{unstable $\mathcal{A}$-module}. This is characterized by the unstability condition, which states that for any homogeneous element $u \in \mathcal{P}_k$ of degree $d$, the action of $Sq^i$ is zero whenever the degree of the operation exceeds the degree of the element:
\[
Sq^i(u) = 0 \quad \text{for all } i > \deg(u).
\]
Furthermore, as an unstable algebra, it also satisfies the property that $Sq^{\deg(u)}(u) = u^2$ for any $u \in \mathcal{P}_k$. These properties are fundamental to the study of the Peterson hit problem and the behavior of the Singer transfer $\varphi_k$ \cite{Singer1989}.

\medskip

The results presented in Section \ref{s3} rely on a crucial, and computationally profound, step: the determination of the dimension of the space of group invariants. In~\cite{Singer1989}, Singer made a conjecture that \textit{the algebraic transfer is always a monomorphism in all bidegrees \((k, k+*)\)}. This was proven to be true by Singer himself for \(k = 1, 2\), by Boardman~\cite{Boardman} for \(k = 3\), and more recently by us~\cite{Phuc1, Phuc2, Phuc3} for \(k = 4\).  Note that our papers~\cite{Phuc1, Phuc2, Phuc3} have corrected several errors from manual computations found in earlier published versions. These earlier computations were carried out entirely by hand over many years, and as such, mistakes were inevitable due to the overwhelming complexity of manually controlling all intermediate steps. Therefore, the purpose of this section is to present an explicit algorithm implemented in \textsc{SageMath}, which allows readers to verify all the previously hand-calculated results in a transparent and rigorous manner, provided they lie within the memory limits of the computer. Nevertheless, since the dimension of the space \((Q\mathcal P_4)_d\) does not exceed 315 for any positive degree~\(d\) (see~\cite{Sum0}), the implementation of our algorithm for computing the invariant subspace $[(Q\mathcal P_4)_d]^{G_4}$ is entirely feasible and computationally manageable. \textit{It is important to note that there is no such thing as computing an ``\textbf{infinite number of cases}'' when the degree $d$ depends on certain parameters. The reason is that, for instance, $d = 2^{s+3} + 2^{s+1} - 3$ with $s$ a positive integer; consequently, as shown in \cite{Sum0}, for each fixed value of $s$, the admissible monomial basis of degree $d$ (as parameterized by $s$) follows a uniform structure.} Indeed, for $1\leq s\leq 3,$ the monomials involve powers of the variables that are finite in terms of $s.$ For each $s > 3,$ we observe that the final list includes the following three monomials:
\[
\begin{aligned}
x_1^{7}x_2^{2^{s}-5}x_3^{2^{s}-3}x_4^{2^{s+3}-2},\\
x_1^{7}x_2^{2^{s}-5}x_3^{2^{s+1}-3}x_4^{7\cdot 2^{s}-2},\\
x_1^{7}x_2^{2^{s}-5}x_3^{2^{s+3}-3}x_4^{2^{s}-2},
\end{aligned}
\]
and that the exponents depending on $s$ follow a consistent pattern. Therefore, in this case, it is sufficient to work with $s = 4$, corresponding to degree $d = 128$. In summary, to compute the $G_4$-invariant space $[(Q\mathcal P_4)_{d = 2^{s+3} + 2^{s+1} - 3}]^{G_4}$ using our algorithm, it is enough to consider $s\in \{1,\, 2,\, 3,\, 4\},$ which correspond to degrees $d\in \{17,\, 37,\, 77,\, 128\}.$ Once the explicit results are obtained, we can then match the monomial patterns and write down the general form From these, one can extract and confirm the general pattern, as the reader may already be familiar with. Unlike earlier works that relied solely on manual computations, our algorithmic approach ensures reproducibility and minimizes human error.

\medskip

To understand the Singer conjecture for a given rank $k$ and degree $d$, one must ultimately understand the domain of the transfer, which is dually isomorphic to the space of $G_k$-invariants, $[(Q\mathcal{P}_k)_d]^{G_k}$. This space sits at the confluence of representation theory, combinatorics, and algebraic topology, and its computation is of paramount importance for the following several reasons:

$-$ First, the dimension of this invariant space provides a definitive test for the Singer Conjecture in the corresponding bidegree. If $\dim ([(Q\mathcal{P}_k)_d]^{G_k}) = 0$, then the domain of the transfer is the zero vector space, and the transfer map $\varphi_k$ is trivially a monomorphism. In such cases, the conjecture is proven to hold;

$-$ Second, when the dimension is non-zero, it establishes a hard upper bound on the portion of the Ext group that can be "seen" by the algebraic transfer. That is, the dimension of the image of $\varphi_k$ cannot exceed the dimension of its domain. This allows us to make precise statements about which cohomology classes can or cannot be detected by Singer's construction;

$-$ Third, and perhaps most importantly for structural understanding, an explicit basis for the invariant space allows for the construction of concrete elements in the domain of the transfer. 

\medskip

For instance,  we can point to a class like $p_0\in \Ext^{4, 4 + 33}_{\mathcal{A}}(\mathbb Z/2, \mathbb Z/2)$ on the chart and provide a constructive proof of its existence. This involves identifying a specific polynomial invariant $f$ and showing that the Singer transfer $\varphi_k$, when applied to its dual, yields a cocycle $y$ that is a known representative for the class $p_0.$ Based on our algorithm explicitly described below, the invariant polynomial~\(f\) takes the following form. 
\begin{align*}
f= G_4\text{-Invariant}_1 = &x_1 x_2 x_3 x_4^{30} + x_1 x_2 x_3^3 x_4^{28} + x_1 x_2^3 x_3 x_4^{28} + x_1 x_2^3 x_3^4 x_4^{25} + x_1 x_2^7 x_3^{11} x_4^{14} \\
&+ x_1 x_2^7 x_3^{14} x_4^{11} + x_1^3 x_2 x_3 x_4^{28} + x_1^3 x_2 x_3^4 x_4^{25} + x_1^3 x_2^5 x_3 x_4^{24} \\
&+ x_1^3 x_2^5 x_3^{11} x_4^{14} + x_1^3 x_2^5 x_3^{14} x_4^{11} + x_1^7 x_2 x_3^{11} x_4^{14} + x_1^7 x_2 x_3^{14} x_4^{11} \\
&+ x_1^7 x_2^7 x_3^{11} x_4^8 + x_1^7 x_2^7 x_3^8 x_4^{11} + x_1^7 x_2^7 x_3^9 x_4^{10}.
\end{align*}
The algorithm reconstructs, in a fully explicit and verifiable manner, the computation process that was originally performed by hand in our previous work~\cite{Phuc1}, thus producing the invariant polynomial~\(f\) in the form shown above. Accordingly, the dual element corresponding to \([f]\) is explicitly identified in Section~\ref{s3}.

\medskip

The above context provides a constructive method for populating the Adams spectral sequence. Indeed, the Adams spectral sequence can be visualized as a vast, complex chart whose entries on the second page ($E_2$-term) are the $\Ext_{\mathcal{A}}$ groups. For decades, mathematicians have worked to understand this chart: to ``populate'' it by determining which locations contain non-zero elements and what the structure of those elements is. A ``non-constructive'' proof might show that an element must exist at a certain position, but it doesn't provide a formula for it. In contrast, the  approach via the Singer transfer is \emph{constructive}. It ``populates'' the spectral sequence by providing an explicit formula for an element in  $\Ext_{\mathcal{A}}$ at a given bidegree $(k, t)$. 

\medskip

However, the direct computation of these invariants is a formidable task. The traditional approach of first finding the invariants under the symmetric group $\Sigma_k$ and then restricting to the action of the remaining $G_k$ generators becomes computationally explosive. The sheer number of basis elements in $(Q\mathcal{P}_k)_d$ and the complexity of tracking group actions make manual computation unreliable and infeasible for all but the smallest degrees. The combinatorial explosion in the dimensionality of the problem necessitates a systematic and algorithmic approach that can be implemented on a computer.

\subsection{An algorithmic framework for invariant computation}

Before proceeding to construct the algorithm for computing the invariant space \([(Q\mathcal{P}_k)_d]^{G_k}\), We first review the foundational concepts and present several related results along with detailed proofs.

\medskip

\section*{The Weight Vector}

The Peterson hit problem, which seeks a minimal generating set for the polynomial algebra $\PK$ as a module over the Steenrod algebra $\A$, is computationally challenging. The difficulty arises from the large dimension of the quotient space of ``cohits'', $Q\PK = \PK / \Aplus\PK$, for higher ranks and degrees. To make this problem tractable, a ``divide and conquer'' strategy is employed. The core idea is to stratify or ``filter'' the large space $\PK$ into smaller, more manageable layers. The primary tool for this stratification is the \textit{weight vector}, $\omega(x)$, associated with each monomial $x$. The definitions that follow provide the necessary algebraic machinery to formalize this filtration and analyze the structure of the cohit space layer by layer.

\begin{definition}[Weight Vector]
Let the dyadic expansion of a non-negative integer $a$ be given by $a = \sum_{j\geq 0}\alpha_j(a)2^j$, where $\alpha_j(a) \in \{0, 1\}$.

For each monomial $x = x_1^{a_1}x_2^{a_2}\ldots x_k^{a_k}$ in $\PK$, its \textbf{weight vector}, denoted $\omega(x)$, is an infinite sequence of non-negative integers:
$$ \omega(x) = (\omega_1(x), \omega_2(x), \ldots, \omega_j(x), \ldots).$$
where the $j$-th component is the sum of the $(j-1)$-th bits of the exponents:
$$ \omega_j(x) = \sum_{i=1}^{k}\alpha_{j-1}(a_i).$$
\end{definition}

\begin{remark}
The weight vector translates the multiplicative structure of a monomial's exponents into an additive sequence. The lexicographical ordering on these vectors provides a more refined way to compare monomials than simply using their total degree.
\end{remark}

\begin{example}
Consider the monomial $x = x_1^6 x_2^5$ in $\mathcal{P}_2$.
\begin{enumerate}
    \item[$-$] The dyadic expansions of the exponents are:
    \begin{itemize}
        \item $a_1 = 6 = (110)_2 = 0 \cdot 2^0 + 1 \cdot 2^1 + 1 \cdot 2^2$.
        \item $a_2 = 5 = (101)_2 = 1 \cdot 2^0 + 0 \cdot 2^1 + 1 \cdot 2^2$.
    \end{itemize}

    \item[$-$] We compute the components of the weight vector $\omega(x)$:
    \begin{itemize}
        \item $\omega_1(x) = \alpha_0(6) + \alpha_0(5) = 0 + 1 = 1$.
        \item $\omega_2(x) = \alpha_1(6) + \alpha_1(5) = 1 + 0 = 1$.
        \item $\omega_3(x) = \alpha_2(6) + \alpha_2(5) = 1 + 1 = 2$.
        \item For $j > 3$, $\omega_j(x) = 0$.
    \end{itemize}
\end{enumerate}
Thus, the weight vector of $x$ is $\omega(x) = (1, 1, 2, 0, 0, \dots)$.
\end{example}

\section*{The Filtration Subspaces}

\begin{definition}[Subspaces Associated with a Weight Vector]
For a given weight vector $\omega = (\omega_1, \omega_2, \ldots, \omega_s, 0, \dots)$:
\begin{itemize}
    \item The \textbf{degree of $\omega$}, denoted $\deg \omega$, is defined as $\sum_{j\geq 1} 2^{j-1}\omega_j$. This formula reconstructs the total degree of a monomial from its weight vector.
    \item $\mathcal{P}_k(\omega)$ is the subspace of $\PK$ spanned by all monomials $u$ such that $\deg u = \deg \omega$ and $\omega(u) \le \omega$ under the lexicographical ordering.
    \item $\mathcal{P}_k^-(\omega)$ is the subspace of $\mathcal{P}_k(\omega)$ spanned by all monomials $u$ such that $\omega(u) < \omega$.
\end{itemize}
\end{definition}

\begin{remark}
The spaces $\mathcal{P}_k(\omega)$ and $\mathcal{P}_k^-(\omega)$ are the building blocks of the filtration. For an ordered set of weight vectors, $\mathcal{P}_k(\omega)$ represents the collection of all polynomials ``up to level $\omega$'', while $\mathcal{P}_k^-(\omega)$ contains everything ``strictly below level $\omega$''. The ``new'' information at level $\omega$ is captured by the quotient of these two spaces.
\end{remark}

\section*{Equivalence Relations for the Quotient Spaces}

\begin{definition}[Equivalence Relations]
Let $f, g$ be two homogeneous polynomials of the same degree in $\PK$.
\begin{enumerate}
    \item[(i)] \textbf{Hit Equivalence ($\equiv$):} We write $f \equiv g$ if and only if their difference is a "hit" element:
    $$ (f + g) \in \Aplus\PK.$$
    In particular, $f \equiv 0$ means that $f$ is hit. This relation defines the standard cohit space $Q\PK = \PK / \Aplus\PK$.

    \item[(ii)] \textbf{Weighted Equivalence ($\equiv_\omega$):} We write $f \equiv_\omega g$ if and only if their difference is either a hit element or a polynomial of strictly smaller weight:
    $$ (f + g) \in \Aplus\PK + \mathcal{P}_k^-(\omega).$$
\end{enumerate}
\end{definition}

\begin{remark}
The weighted equivalence relation ``$\equiv_\omega$'' is the key technical tool that makes the filtration work. When analyzing the structure at a specific weight level $\omega$, this relation allows us to disregard, or "quotient out," two types of terms:
\begin{itemize}
    \item Elements that are hit in the standard sense ($\Aplus\PK$).
    \item Elements that belong to a lower stratum of the filtration ($\mathcal{P}_k^-(\omega)$).
\end{itemize}
This effectively isolates the algebraic structure that is unique to the weight level $\omega$, which is formalized in the quotient space $Q\mathcal{P}_k(\omega)$.
\end{remark}

Both the relations ``$\equiv$'' and ``$\equiv_{\omega}$'' possess the characteristics of equivalence relations, as is readily observable.  Let $Q\mathcal{P}_k(\omega)$ denote the quotient of $\mathcal{P}_k(\omega)$ by the equivalence relation ``$\equiv_\omega$''. The known results below can be found in the monographs of Walker and Wood~\cite{Walker-Wood, Walker-Wood2}. However, for the sake of completeness and to assist the reader in following the exposition, we provide full proofs of these results here.

\begin{proposition}
We have 
$$ \dim (Q\mathcal{P}_k)_d = \sum_{\deg(\omega) = d}\dim Q\mathcal{P}_k(\omega).$$
\end{proposition}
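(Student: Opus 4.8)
The plan is to show that the quotient space $(Q\mathcal{P}_k)_d$ decomposes as a direct sum of the pieces $Q\mathcal{P}_k(\omega)$ indexed by the finitely many weight vectors $\omega$ with $\deg \omega = d$, and then take dimensions. First I would fix a total ordering on the (finite) set $\{\omega : \deg\omega = d\}$ refining the lexicographic order, say $\omega^{(1)} < \omega^{(2)} < \cdots < \omega^{(N)}$, so that $\mathcal{P}_k(\omega^{(i)})^- \subseteq \mathcal{P}_k(\omega^{(i-1)})$ and one obtains an increasing filtration
\[
0 = F_0 \subseteq F_1 \subseteq \cdots \subseteq F_N = (\mathcal{P}_k)_d,
\qquad F_i := \sum_{j \le i} \mathcal{P}_k(\omega^{(j)}),
\]
whose associated graded pieces are, essentially by definition of $\mathcal{P}_k^-(\omega)$, the spaces $\mathcal{P}_k(\omega^{(i)}) / \mathcal{P}_k^-(\omega^{(i)})$. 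Since the monomials of degree $d$ are partitioned according to their weight vectors, this filtration of the monomial basis is split on the nose: $F_i$ has a basis consisting of all degree-$d$ monomials $u$ with $\omega(u) \le \omega^{(i)}$, and the quotient $F_i/F_{i-1}$ has as basis exactly the monomials of weight $\omega^{(i)}$.

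Next I would pass to the cohit quotient. The key point is compatibility of the hit ideal with the filtration: applying $Sq^t$ to a monomial can only lower (or preserve) its weight vector in the lexicographic order — this is the standard fact underlying the whole weight-vector machinery (it follows from the Cartan formula together with the observation that $Sq^t$ acting on $x_i^{a}$ produces terms $x_i^{a+t'}$ whose exponents have bit-sums no larger, suitably interpreted). Consequently $\overline{\mathcal{A}}\mathcal{P}_k \cap (\mathcal{P}_k)_d$ is a filtered subspace: intersecting with $F_i$ gives $\overline{\mathcal{A}}\mathcal{P}_k \cap F_i$, and the hit elements contained in $\mathcal{P}_k(\omega^{(i)})$ but reducing to the graded piece $F_i/F_{i-1}$ are precisely those detected by $\equiv_{\omega^{(i)}}$, since $\equiv_{\omega}$ quotients out $\overline{\mathcal{A}}\mathcal{P}_k + \mathcal{P}_k^-(\omega)$. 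Therefore the induced filtration on $(Q\mathcal{P}_k)_d = (\mathcal{P}_k)_d / (\overline{\mathcal{A}}\mathcal{P}_k \cap (\mathcal{P}_k)_d)$ has associated graded pieces
\[
\frac{F_i/F_{i-1}}{\text{image of hits}} \;\cong\; \frac{\mathcal{P}_k(\omega^{(i)})}{\overline{\mathcal{A}}\mathcal{P}_k \cap \mathcal{P}_k(\omega^{(i)}) + \mathcal{P}_k^-(\omega^{(i)})} \;=\; Q\mathcal{P}_k(\omega^{(i)}).
\]
Taking dimensions and using that for a finite filtration $\dim(\text{top}) = \sum_i \dim(\text{gr}_i)$ yields the claimed formula $\dim (Q\mathcal{P}_k)_d = \sum_{\deg(\omega) = d}\dim Q\mathcal{P}_k(\omega)$.

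The main obstacle, and the step that genuinely requires care rather than bookkeeping, is verifying that the hit ideal respects the weight filtration — i.e. that $Sq^t(u)$ is a sum of monomials each of weight $\le \omega(u)$ lexicographically, so that $\overline{\mathcal{A}}\mathcal{P}_k \cap \mathcal{P}_k(\omega) \supseteq Sq^t\big(\mathcal{P}_k(\omega)\big)$ in the appropriate sense and the quotient identification above is legitimate. Everything else (the splitting of the monomial basis, the additivity of dimension over a finite filtration, the translation between $\equiv_\omega$ and the graded pieces) is essentially formal. I would therefore isolate that compatibility as a lemma, cite or reprove it from the Cartan formula and elementary properties of binomial coefficients mod $2$ (Lucas' theorem), and then assemble the dimension count as above.
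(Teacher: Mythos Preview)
Your overall strategy---filter $(Q\mathcal{P}_k)_d$ by weight, identify the associated graded pieces with the $Q\mathcal{P}_k(\omega)$, and sum dimensions---is exactly the paper's approach. But you have misidentified where the work lies. The step you single out as the ``main obstacle'' (that Steenrod operations respect the weight filtration) is \emph{not needed}, and as you phrase it, it is also imprecise: $Sq^t(u)$ lives in a different degree from $u$, so comparing $\omega(Sq^t(u))$ with $\omega(u)$ is not the comparison relevant to a filtration of the fixed degree-$d$ piece, and the inclusion $\overline{\mathcal{A}}\mathcal{P}_k \cap \mathcal{P}_k(\omega) \supseteq Sq^t(\mathcal{P}_k(\omega))$ does not typecheck.

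The identification of the graded pieces is purely formal. For any subspace $H \subseteq V$ and any filtration $F_\bullet$ of $V$, the induced filtration $\overline{F}_i = (F_i + H)/H$ on $V/H$ has graded pieces
\[
\overline{F}_i/\overline{F}_{i-1} \;\cong\; F_i \big/ \bigl(F_{i-1} + (F_i \cap H)\bigr),
\]
by the modular law (using $F_{i-1} \subseteq F_i$). Specializing to $V = (\mathcal{P}_k)_d$, $H = (\overline{\mathcal{A}}\mathcal{P}_k)_d$, and $F_i = \mathcal{P}_k(\omega^{(i)})$ gives exactly $Q\mathcal{P}_k(\omega^{(i)})$ on the right, by the definition of $\equiv_\omega$. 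No property of the Steenrod action is invoked. The paper proceeds in precisely this way: it defines the $F_i$ directly as images in the quotient $(Q\mathcal{P}_k)_d$ and reads off the graded pieces from the definitions. Once you drop the spurious lemma, your argument is correct and coincides with the paper's.
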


\begin{proof}
The proof proceeds by constructing a filtration on the vector space $(Q\mathcal{P}_k)_d$ and analyzing the dimensions of its successive quotients.

\paragraph{Step 1: Establishing the Filtration.}
Let the set of all weight vectors $\omega$ of a fixed degree $d$ be ordered lexicographically:
$$ \omega_1 < \omega_2 < \dots < \omega_m.$$
We define a sequence of subspaces of $(Q\mathcal{P}_k)_d$. Let $F_i$ be the image of the polynomial subspace $\mathcal{P}_k(\omega_i)$ in the full quotient space $(Q\mathcal{P}_k)_d$. This construction yields a filtration, which is a sequence of nested vector spaces:
$$ 0 = F_0 \subseteq F_1 \subseteq F_2 \subseteq \dots \subseteq F_m = (Q\mathcal{P}_k)_d.$$

\paragraph{Step 2: Analyzing the Filtration Quotients via Short Exact Sequences.}
For each step in the filtration, the inclusion $F_{i-1} \subseteq F_i$ gives rise to a canonical short exact sequence of vector spaces:
$$ 0 \to F_{i-1} \to F_i \to F_i / F_{i-1} \to 0.$$
A fundamental property of vector spaces is that dimension is additive over short exact sequences. This means:
\begin{equation} \label{eq:dim_add}
\dim(F_i) = \dim(F_{i-1}) + \dim(F_i / F_{i-1}).
\end{equation}
The next step is to identify the quotient space $F_i / F_{i-1}$. By construction, $F_i$ consists of classes of monomials with weight less than or equal to $\omega_i$. Similarly, $F_{i-1}$ consists of classes of monomials with weight strictly less than $\omega_i$. The quotient $F_i / F_{i-1}$ is therefore isomorphic to the space of classes of monomials with weight exactly $\omega_i$, modulo hit elements and terms of smaller weight. This corresponds precisely to the definition of $Q\mathcal{P}_k(\omega_i).$ Thus, we have an isomorphism of vector spaces:
$$ F_i / F_{i-1} \cong Q\mathcal{P}_k(\omega_i).$$
Therefore, their dimensions are equal: $\dim(F_i / F_{i-1}) = \dim(Q\mathcal{P}_k(\omega_i)).$

\paragraph{Step 3: Deriving the Dimension Equality.}
By substituting this dimension equality back into equation (\ref{eq:dim_add}), we obtain a recursive formula for the dimension at each step of the filtration:
$$ \dim(F_i) = \dim(F_{i-1}) + \dim(Q\mathcal{P}_k(\omega_i)).$$
We can now "unroll" this recursion starting from the final step, $i=m$:
\begin{align*}
\dim(F_m) &= \dim(F_{m-1}) + \dim(Q\mathcal{P}_k(\omega_m)) \\
&= \left( \dim(F_{m-2}) + \dim(Q\mathcal{P}_k(\omega_{m-1})) \right) + \dim(Q\mathcal{P}_k(\omega_m)) \\
&= \dim(F_{m-2}) + \dim(Q\mathcal{P}_k(\omega_{m-1})) + \dim(Q\mathcal{P}_k(\omega_m)) \\
&\vdots \\
&= \dim(F_0) + \sum_{j=1}^{m} \dim(Q\mathcal{P}_k(\omega_j)).
\end{align*}
Since the filtration starts with the zero space, $\dim(F_0) = 0$. The final space in the filtration is the entire space of cohits, $F_m = (Q\mathcal{P}_k)_d$. Therefore, we arrive at the desired conclusion:
$$ \dim(Q\mathcal{P}_k)_d = \sum_{j=1}^{m} \dim(Q\mathcal{P}_k(\omega_j)).$$
As the set $\{\omega_1, \dots, \omega_m\}$ is the set of all weight vectors of degree $d$, this is equivalent to:
$$ \dim (Q\mathcal{P}_k)_d = \sum_{\deg(\omega) = d}\dim Q\mathcal{P}_k(\omega).$$
This completes the proof.

\end{proof}

\begin{proposition}
We have
 $$\dim ((Q\mathcal{P}_k)_d)^{G_k}\leq  \sum_{\deg(\omega) = d}\dim (Q\mathcal{P}_k(\omega))^{G_k}.$$
\end{proposition}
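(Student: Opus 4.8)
The plan is to recycle, almost verbatim, the weight filtration constructed in the proof of the previous proposition, but this time keeping track of the $G_k$-action. Order the weight vectors of degree $d$ lexicographically as $\omega_1 < \omega_2 < \dots < \omega_m$, and let $F_i \subseteq (Q\mathcal{P}_k)_d$ be the image of the polynomial subspace $\mathcal{P}_k(\omega_i)$, so that $0 = F_0 \subseteq F_1 \subseteq \dots \subseteq F_m = (Q\mathcal{P}_k)_d$ and, as shown before, $F_i/F_{i-1} \cong Q\mathcal{P}_k(\omega_i)$. The one new structural input needed is that each subspace $\mathcal{P}_k(\omega_i)$ and each $\mathcal{P}_k^-(\omega_i)$ is stable under the $G_k$-action on $\mathcal{P}_k$ by linear substitutions of the variables. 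Granting this, the $F_i$ are $G_k$-submodules of $(Q\mathcal{P}_k)_d$ (being images of $G_k$-stable subspaces of $\mathcal{P}_k$), and the identification $F_i/F_{i-1} \cong Q\mathcal{P}_k(\omega_i)$ is an isomorphism of $\mathbb{Z}/2[G_k]$-modules.

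With that in hand the argument is short. For each $i$ we have a short exact sequence of $\mathbb{Z}/2[G_k]$-modules
$$0 \longrightarrow F_{i-1} \longrightarrow F_i \longrightarrow Q\mathcal{P}_k(\omega_i) \longrightarrow 0,$$
and applying the left-exact fixed-point functor $(-)^{G_k}$ gives an exact sequence $0 \to F_{i-1}^{G_k} \to F_i^{G_k} \to (Q\mathcal{P}_k(\omega_i))^{G_k}$. Hence $F_i^{G_k}/F_{i-1}^{G_k}$ embeds into $(Q\mathcal{P}_k(\omega_i))^{G_k}$, so $\dim F_i^{G_k} \le \dim F_{i-1}^{G_k} + \dim (Q\mathcal{P}_k(\omega_i))^{G_k}$. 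Unrolling this inequality from $i=m$ down to $i=1$ and using $F_0 = 0$ and $F_m = (Q\mathcal{P}_k)_d$ yields
$$\dim \big((Q\mathcal{P}_k)_d\big)^{G_k} \le \sum_{i=1}^m \dim (Q\mathcal{P}_k(\omega_i))^{G_k} = \sum_{\deg(\omega)=d} \dim (Q\mathcal{P}_k(\omega))^{G_k},$$
which is the assertion.

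The main obstacle is the $G_k$-stability of the filtration subspaces $\mathcal{P}_k(\omega)$ and $\mathcal{P}_k^-(\omega)$, since everything downstream is formal. It suffices to show that for a monomial $u$ and $\sigma \in G_k$, every monomial occurring in $\sigma(u)$ has weight vector $\le \omega(u)$ lexicographically. Expanding $\sigma(x_i) = \sum_j c_{ij} x_j$ by the multinomial theorem and reducing mod $2$ via Lucas' theorem, one checks that a monomial of $\sigma(u)$ arises either with no carries in the exponent arithmetic, in which case its weight vector equals $\omega(u)$, or with at least one carry, and each carry strictly decreases an earlier component of the weight vector (moving weight from a lower to a higher $2$-adic position) at the cost of raising a later one, hence strictly lowers the vector in the lexicographic order; cancellations mod $2$ only delete terms. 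This is precisely the point at which the weight filtration acquires its $\mathcal{A}$- and $GL_k$-module structure, and it is part of the Walker–Wood machinery already cited. I would also flag, conceptually, why only an inequality survives here (whereas the previous proposition gave an equality): the defect is exactly the failure of $F_i^{G_k} \to (Q\mathcal{P}_k(\omega_i))^{G_k}$ to be surjective, i.e.\ the possible nonvanishing of $H^1(G_k, F_{i-1})$, which is unavoidable because $|G_k|$ is even and we work over $\mathbb{Z}/2$.
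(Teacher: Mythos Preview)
Your proof is correct and follows essentially the same approach as the paper: the same weight filtration $0 = F_0 \subseteq \dots \subseteq F_m = (Q\mathcal{P}_k)_d$, the same identification $F_i/F_{i-1} \cong Q\mathcal{P}_k(\omega_i)$ as $G_k$-modules, the same application of left-exactness of $(-)^{G_k}$ to the short exact sequences, and the same telescoping of the resulting inequalities. The paper simply asserts that ``the action of $G_k$ respects the ordering of weight vectors'' with a pointer to the literature, whereas you sketch the Lucas-theorem/carry argument for it and add the cohomological remark explaining why equality can fail; both are welcome elaborations but do not change the route.
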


\begin{proof}
Let the set of all weight vectors of degree $d$ be ordered lexicographically:
$$ \omega_1 < \omega_2 < \dots < \omega_m.$$
For each $i \in \{1, \dots, m\}$, let $F_i$ be the image of the subspace $\mathcal{P}_k(\omega_i)$ in the quotient space $(Q\mathcal P_k)_d.$ This construction yields a sequence of nested subspaces:
$$ 0 = F_0 \subseteq F_1 \subseteq F_2 \subseteq \dots \subseteq F_m = (Q\mathcal{P}_k)_d.$$
As stated in the referenced literature, the action of $G_k$ respects the ordering of weight vectors. Consequently, each $F_i$ is a $G_k$-submodule of $(Q\mathcal{P}_k)_d,$ and the sequence above is a filtration of $G_k$-modules.

For each step $i$ in the filtration, where $F_{i-1}$ is a submodule of $F_i$, we have a canonical short exact sequence of $G_k$-modules:
\begin{equation} \label{eq:ses}
0 \to F_{i-1} \to F_i \to F_i / F_{i-1} \to 0.
\end{equation}
The quotient module $F_i / F_{i-1}$ represents the ``new information'' added at step $i$. By construction, $F_i$ consists of classes of monomials with weight $\le \omega_i$, while $F_{i-1}$ consists of classes with weight $< \omega_i$. The quotient $F_i / F_{i-1}$ is therefore isomorphic to the space of classes of monomials with weight exactly $\omega_i$, modulo hit elements and terms of strictly smaller weight. This is precisely the definition of $Q\mathcal P_k(\omega_i)$. Thus, we have an isomorphism of $G_k$-modules:
\begin{equation}\label{dc}
 F_i / F_{i-1} \cong Q\mathcal P_k(\omega_i).
\end{equation}
The functor of taking $G_k$-invariants, denoted $(-)^{G_k}$, is left-exact. When applied to the short exact sequence (\ref{eq:ses}), it yields the following exact sequence:
$$ 0 \to (F_{i-1})^{G_k} \to (F_i)^{G_k}\to (F_i / F_{i-1})^{G_k}.$$
From the exactness of this sequence, we can deduce an inequality for the dimensions of these vector spaces:
$$\dim((F_i)^{G_k}) \le \dim((F_{i-1})^{G_k}) + \dim((F_i / F_{i-1})^{G_k}).$$
Using \eqref{dc}, this becomes:
\begin{equation}\label{eq:recursive_ineq}
\dim((F_i)^{G_k}) \le \dim((F_{i-1})^{G_k}) + \dim(Q\mathcal P_k(\omega_i))^{G_k}).
\end{equation}
Let's start with the final step of the filtration, for $i=m$:
$$ \dim((F_m)^{G_k}) \le \dim((F_{m-1})^{G_k}) + \dim((Q\mathcal{P}_k(\omega_m))^{G_k}) $$
Now, we can apply the same inequality (\ref{eq:recursive_ineq}) for the term $\dim((F_{m-1})^{G_k})$:
$$ \dim((F_{m-1})^{G_k}) \le \dim((F_{m-2})^{G_k}) + \dim((Q\mathcal{P}_k(\omega_{m-1}))^{G_k}).$$
Substituting this into the first inequality gives:
\begin{align*}
\dim((F_m)^{G_k}) &\le \left( \dim((F_{m-2})^{G_k}) + \dim((Q\mathcal{P}_k(\omega_{m-1}))^{G_k}) \right) + \dim((Q\mathcal{P}_k(\omega_m))^{G_k}) \\
&= \dim((F_{m-2})^{G_k}) + \dim((Q\mathcal{P}_k(\omega_{m-1}))^{G_k}) + \dim((Q\mathcal{P}_k(\omega_m))^{G_k}).
\end{align*}
If we continue this process of substitution for $\dim((F_{m-2})^{G_k})$, $\dim((F_{m-3})^{G_k})$, and so on, we ``unroll'' the recursion. After unrolling all terms down to $\dim((F_1)^{G_k})$, we obtain:
$$ \dim((F_m)^{G_k}) \le \dim((F_1)^{G_k}) + \sum_{j=2}^{m} \dim((Q\mathcal{P}_k(\omega_j))^{G_k}).$$
Finally, we use the base case of the filtration, where $i=1$. From inequality (\ref{eq:recursive_ineq}), since $F_0 = 0$ and thus $\dim((F_0)^{G_k}) = 0$, we have:
$$ \dim((F_1)^{G_k}) \le \dim((Q\mathcal{P}_k(\omega_1))^{G_k}).$$
Substituting this last piece into our accumulated inequality yields:
\begin{align*}
\dim((F_m)^{G_k}) &\le \dim((Q\mathcal{P}_k(\omega_1))^{G_k}) + \sum_{j=2}^{m} \dim((Q\mathcal{P}_k(\omega_j))^{G_k}) \\
&= \sum_{j=1}^{m} \dim((Q\mathcal{P}_k(\omega_j))^{G_k}).
\end{align*}
Since $F_m = (Q\mathcal{P}_k)_d$ and the set $\{\omega_1, \dots, \omega_m\}$ comprises all weight vectors of degree $d$, we can write the final result as:
$$ \dim((Q\mathcal{P}_k)_d)^{G_k}) \le \sum_{\deg(\omega) = d} \dim((Q\mathcal{P}_k(\omega))^{G_k}).$$
This completes the direct derivation.
\end{proof}

\section*{Motivation: The need for a basis}

Recall that the primary goal of the Peterson hit problem is to determine a vector space basis for the space of cohits, $\QPKd = \PKd / (\Aplus\PK)_d$. The initial space of polynomials, $(\PK)_d$, has a large, easily described basis consisting of all monomials of degree $d$. However, in the quotient space $\QPKd$, many of these monomials become linearly dependent. For instance, if $f \in \Aplus\PK$, then the class $[f]$ is zero, meaning relations like $[x] = [x+f]$ hold.

The challenge is to select a minimal set of monomials whose corresponding classes in $\QPKd$ are linearly independent and span the entire space. The concepts of \textit{admissible} and \textit{inadmissible} monomials provide a systematic criterion for this selection process. The core idea is to define an ordering on the monomials and then determine, one by one, whether a monomial's class is ``new'' or if it can be expressed in terms of classes of ``smaller'' monomials that have already been considered.

{\bf Admissible and Inadmissible monomials:} To formalize this, we first require a total ordering on the set of monomials. We use a lexicographical order on the weight and exponent vectors, denoted by ``$<$''.

\begin{definition}[Inadmissible and Admissible Monomials]
\leavevmode
\begin{itemize}
    \item A monomial $x \in \PK$ is said to be \textbf{inadmissible} if its class $[x]$ in $Q\PK$ can be expressed as a linear combination of classes represented by strictly smaller monomials. That is, if there exist monomials $y_1, y_2, \dots, y_m$ such that $y_j < x$ for all $j$, and
    $$ x \equiv \sum_{j=1}^{m} y_j.$$
    (Recall that $a \equiv b$ means $a+b \in \Aplus\PK$).

    \item A monomial is called \textbf{admissible} if it is not inadmissible.
\end{itemize}
\end{definition}

\begin{remark}[Meaning and Significance]
\leavevmode
\begin{itemize}
    \item \textbf{Inadmissible monomials are ``redundant''.} Their classes in $Q\PK$ do not introduce new, independent directions in the vector space; they can be constructed from elements we have already accounted for (the smaller monomials). Therefore, they are excluded from our basis.
    \item \textbf{Admissible monomials are ``essential''.} Their classes are linearly independent of the classes of all smaller monomials. They represent the new, fundamental components of the quotient space at that point in the ordering.
\end{itemize}
This leads to the central result that motivates these definitions:
\end{remark}

\begin{proposition}
The set of all classes represented by admissible monomials of degree $d$ forms a vector space basis for $(\QPKd)$.
\end{proposition}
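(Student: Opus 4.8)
The plan is to establish the two defining properties of a basis separately: that the classes of admissible monomials of degree $d$ span $(\QPKd)$, and that they are linearly independent over $\Ztwo$. Both arguments rely on the fact that the set of monomials of degree $d$ in $\PK$ is finite and is totally ordered by ``$<$'' (the lexicographic order on weight-then-exponent vectors), hence well-ordered, so that induction on this order is legitimate.

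For spanning, I would argue by strong induction on ``$<$'' that the class $[x]$ of every degree-$d$ monomial $x$ lies in the $\Ztwo$-span of the admissible classes. If $x$ is admissible there is nothing to prove. If $x$ is inadmissible, then by definition $x \equiv \sum_{j} y_j$ for monomials $y_j < x$; by the inductive hypothesis each $[y_j]$ is a $\Ztwo$-linear combination of admissible classes, hence so is $[x]$. Since the monomials of degree $d$ span $(\PK)_d = \PKd$, their classes span $(\QPKd)$, and therefore so do the admissible classes.

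For linear independence, suppose toward a contradiction that a nontrivial $\Ztwo$-linear relation $\sum_{i\in S}[x_i]=0$ holds among distinct admissible degree-$d$ monomials $\{x_i\}_{i\in S}$, i.e.\ $\sum_{i\in S} x_i \in \Aplus\PK$. Let $x_{i_0}$ be the largest element of $\{x_i\}_{i\in S}$ under ``$<$''. Working over $\Ztwo$ we may rewrite this as $x_{i_0} \equiv \sum_{i\in S,\, i\neq i_0} x_i$, where every monomial on the right is strictly smaller than $x_{i_0}$ (the sum being empty if $S$ is a singleton). This exhibits $x_{i_0}$ as inadmissible, contradicting its admissibility. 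Hence no nontrivial relation exists.

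Once the total order on the finite set of degree-$d$ monomials and its well-foundedness are in place, the argument is essentially bookkeeping directly from the definitions. The one point I expect to require genuine care, and where I would spend most of the write-up, is verifying that ``$<$'' is indeed a total order on this set, so that the strong-induction step in the spanning proof is valid and the iterative rewriting of inadmissible monomials is guaranteed to terminate rather than cycle; the linear independence half is then immediate.
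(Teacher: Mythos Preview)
Your proposal is correct and follows essentially the same approach as the paper: spanning via reduction along the well-founded total order $<$ (the paper phrases it as iterative replacement of the largest monomial, you as strong induction on monomials, but these are equivalent), and linear independence by the identical contradiction argument isolating the largest monomial in a putative relation. Your emphasis on verifying that $<$ is a total order on the finite set of degree-$d$ monomials is appropriate, though the paper simply takes this as given.
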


\begin{proof}
To prove that the set of classes of admissible monomials, let's call it $\mathcal{B}_{adm}$, forms a basis for the vector space $\QPKd$, we must demonstrate two properties: that $\mathcal{B}_{adm}$ spans $\QPKd$, and that it is linearly independent.

\subsection*{(A) Spanning Property}

We want to show that any class $[f] \in \QPKd$ can be written as a linear combination of classes of admissible monomials. We use an argument based on the well-ordering of monomials.

Let $f \in (\PK)_d$ be an arbitrary homogeneous polynomial of degree $d$. We can write $f$ as a sum of its constituent monomials. Let $x_{max}$ be the largest monomial appearing in $f$ with a non-zero coefficient, according to the predefined monomial ordering ``$<$''. We can write $f = c \cdot x_{max} + f'$, where $c=1$ and all monomials in $f'$ are strictly smaller than $x_{max}$.

We have two cases for the monomial $x_{max}$:

\begin{itemize}
    \item \textbf{Case 1: $x_{max}$ is admissible.}
    In this case, $[f] = [x_{max}] + [f']$. The first term, $[x_{max}]$, is already a class of an admissible monomial. The remainder, $f'$, is a polynomial whose largest monomial is smaller than $x_{max}$. We can repeat this process on $f'$. Since the set of monomials of a given degree is finite, this process must terminate. Each step replaces a polynomial with a sum involving an admissible class and a ``smaller'' polynomial. Eventually, we express $[f]$ entirely as a sum of classes of admissible monomials.

\item \textbf{Case 2: $x_{max}$ is inadmissible.}
    By the definition of an inadmissible monomial, there exist monomials $y_1, \dots, y_m$ such that $y_j < x_{max}$ for all $j$, and the following congruence holds:
    $$ x_{max} \equiv \sum_{j=1}^{m} y_j.$$
    The relation `$\equiv$` signifies equivalence modulo the subspace of hit elements, $\Aplus\PK$. By definition, the congruence above means that the difference between the two sides is a hit element:
    $$ x_{max} + \sum_{j=1}^{m} y_j \in \Aplus\PK.$$
    This implies that there exists some polynomial $h$ such that $h \in \Aplus\PK$ and
    $$ x_{max} = \sum_{j=1}^{m} y_j + h.$$
    We can now substitute this expression for $x_{max}$ back into our original equation for $f$, which was $f = x_{max} + f'$.
    $$ f = \left(\sum_{j=1}^{m} y_j + h\right) + f'.$$
    Now consider the class of $f$ in $\QPKd$. Since $h \in \Aplus\PK$, its class $[h]$ is zero.
    $$ [f] = \left[ \left(\sum_{j=1}^{m} y_j\right) + f' \right].$$
    The new polynomial $f_{new} = (\sum y_j) + f'$ has a largest monomial that is strictly smaller than $x_{max}$. We have successfully replaced the polynomial $f$ with an equivalent polynomial $f_{new}$ whose largest monomial is smaller. We can now repeat the argument on $f_{new}$. This reduction process must terminate, as there is a finite number of monomials of a given degree.
\end{itemize}
In both cases, we can reduce any polynomial class $[f]$ to a linear combination of admissible monomial classes. Therefore, the set $\mathcal{B}_{adm}$ spans $\QPKd$.

\subsection*{(B) Linear Independence}
We now show that the set of classes of admissible monomials is linearly independent. We use a proof by contradiction.

Assume the set is linearly dependent. This means there exists a non-trivial linear combination of distinct admissible monomial classes that equals zero in $\QPKd$:
$$ \sum_{i=1}^N c_i [x_i] = [0].$$
where each $x_i$ is an admissible monomial, $c_i \in \{0, 1\}$, and at least one $c_i$ is non-zero.

By the definition of the quotient space, this equation means:
$$ \sum_{i=1}^N c_i x_i \in \Aplus\PK.$$
Let $x_{max}$ be the largest monomial among $\{x_1, \dots, x_N\}$ that has a non-zero coefficient, say $c_{max}=1$. We can isolate this term:
$$ x_{max} + \sum_{x_i < x_{max}} c_i x_i \in \Aplus\PK.$$
This is equivalent to the congruence relation:
$$ x_{max} \equiv \sum_{x_i < x_{max}} c_i x_i.$$
This equation shows that the monomial $x_{max}$ is equivalent to a linear combination of monomials that are all strictly smaller than it. By definition, this means that $x_{max}$ is an inadmissible monomial. This is a contradiction, as we started with the assumption that all the $x_i$, including $x_{max}$, were admissible. Therefore, our initial assumption of a non-trivial linear relation must be false. All coefficients $c_i$ must be zero. Thus, the set of classes of admissible monomials is linearly independent.

\subsection*{(C) Conclusion}
Since the set of classes of admissible monomials of degree $d$ both spans $\QPKd$ and is linearly independent, it forms a basis for the vector space $\QPKd$.
\end{proof}

{\bf The technical condition of Strict Inadmissibility:} While the definition of inadmissibility is conceptually clear, it can be difficult to prove directly that a relation $x \equiv \sum y_j$ exists. The notion of ``strict inadmissibility'' provides a more concrete, verifiable condition that implies inadmissibility.

\begin{definition}[Strictly Inadmissible monomial]
A monomial $x \in \PK$ is defined as \textbf{strictly inadmissible} if there exist monomials $y_1, y_2, \dots, y_m$ of the same degree as $x$ with $y_j < x$ for all $j$, such that $x$ can be expressed in the form:
$$ x = \sum_{j=1}^{m} y_j + \sum_{\ell=1}^{2^r-1} Sq^{\ell}(h_\ell).$$
where $r = \max\{i \in \Z : \omega_i(x) > 0\}$ and $h_\ell$ are appropriate polynomials in $\PK$.
\end{definition}


\begin{proposition}
We consider:
$$\begin{array}{ll}
\medskip
\mathcal {P}_k^0 &=\big\langle \big\{x_1^{a_1}x_2^{a_2}\ldots x_k^{a_k}\in \mathcal {P}_k\;|\;a_1a_2\ldots a_k = 0\big\}\big\rangle, \\
\mathcal {P}_k^+&= \big\langle \big\{x_1^{a_1}x_2^{a_2}\ldots x_k^{a_k}\in \mathcal {P}_k\;|\; a_1a_2\ldots a_k > 0\big\}\big\rangle.
\end{array}$$
Consequently, $\mathcal {P}_k^0$ and $\mathcal {P}_k^+$ are $\A$-submodules of $\PK,$ and $$(Q\mathcal {P}_k)_d \cong (Q\mathcal {P}_k^0)_{d}\oplus (Q\mathcal {P}_k^+)_{d}.$$
\end{proposition}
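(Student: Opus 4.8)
The plan is to deduce the isomorphism from a splitting one level up: I will show that $\PK = \PKzero \oplus \PKplus$ is a direct sum decomposition of $\mathcal{A}$-modules (not merely of graded vector spaces), and then pass to the quotient by $\Aplus(-)$ in each degree. As graded vector spaces the splitting is immediate: the monomials $x_1^{a_1}\cdots x_k^{a_k}$ of degree $d$ partition into those with $a_1 a_2 \cdots a_k = 0$, which form a basis of $(\PKzero)_d$, and those with $a_1 a_2 \cdots a_k > 0$, which form a basis of $(\PKplus)_d$; these two sets are disjoint and exhaust all monomials, so $\PKd = (\PKzero)_d \oplus (\PKplus)_d$.

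The substantive step is to check that each summand is $\mathcal{A}$-stable, for which I would use the Cartan formula together with the single-variable formula $\Sq^n(x_j^{a}) = \binom{a}{n}x_j^{a+n}$. For $\PKzero$: writing $\PKzero = \sum_{i=1}^k \mathcal{P}_k^{(i)}$, where $\mathcal{P}_k^{(i)} \subseteq \PK$ is the subalgebra of polynomials not involving $x_i$, it suffices to observe that the Steenrod squares raise exponents but never introduce a variable that was absent, so $\Sq^n(\mathcal{P}_k^{(i)}) \subseteq \mathcal{P}_k^{(i)}$; hence each $\mathcal{P}_k^{(i)}$, and therefore their sum $\PKzero$, is an $\mathcal{A}$-submodule. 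For $\PKplus$: if $a_1, \ldots, a_k > 0$ then
\[
\Sq^n\!\big(x_1^{a_1}\cdots x_k^{a_k}\big) = \sum_{n_1 + \cdots + n_k = n} \prod_{j=1}^k \binom{a_j}{n_j}\, x_j^{a_j + n_j},
\]
and every surviving term has all exponents $a_j + n_j \ge a_j \ge 1$, so it again lies in $\PKplus$; thus $\PKplus$ is an $\mathcal{A}$-submodule. Together with the first step this gives $\PK = \PKzero \oplus \PKplus$ as $\mathcal{A}$-modules.

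Finally I would descend to the cohit space. Since the decomposition is one of $\mathcal{A}$-modules with $\Aplus\PKzero \subseteq \PKzero$ and $\Aplus\PKplus \subseteq \PKplus$, one gets $\Aplus\PK = \Aplus\PKzero \oplus \Aplus\PKplus$ and, intersecting with the summands, $\Aplus\PK \cap \PKzero = \Aplus\PKzero$ and $\Aplus\PK \cap \PKplus = \Aplus\PKplus$. Taking the degree-$d$ part and quotienting therefore yields
\[
\QPKd = \PKd / (\Aplus\PK)_d \;\cong\; (\PKzero)_d/(\Aplus\PKzero)_d \,\oplus\, (\PKplus)_d/(\Aplus\PKplus)_d \;=\; (\QPKzero)_d \oplus (\QPKplus)_d,
\]
as claimed. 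The only place where anything must actually be verified is the stability of $\PKplus$ under the Steenrod action, and this is precisely where the elementary computation $\Sq^n(x^a) = \binom{a}{n}x^{a+n}$ is essential: it guarantees that exponents can only increase, so that the ``all exponents positive'' condition is preserved. Everything after that is formal manipulation of direct sums of $\mathcal{A}$-modules, so I expect no real obstacle beyond stating this monomial-level check carefully.
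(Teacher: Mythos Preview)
Your proof is correct and follows essentially the same three-step approach as the paper: establish the vector-space splitting on monomials, verify $\mathcal{A}$-stability of each summand via the Cartan formula, then pass to the quotient. The only cosmetic difference is your treatment of $\PKzero$ as the sum $\sum_i \mathcal{P}_k^{(i)}$ of sub-$\mathcal{A}$-modules, whereas the paper argues directly on a single monomial; both arguments express the same observation that Steenrod squares never introduce an absent variable.
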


\begin{proof}
First, we establish the decomposition at the level of polynomial vector spaces. Second, we show that this decomposition is respected by the action of the Steenrod algebra $\A.$. Finally, we show that this module decomposition passes to the quotient spaces.

\subsection*{Step 1: Decomposition as Vector Spaces}

The polynomial algebra $\PK = \Ztwo[x_1, \dots, x_k]$ has a canonical basis consisting of all monomials $x^I = x_1^{a_1} \dots x_k^{a_k}$. This set of basis monomials can be partitioned into two disjoint subsets:
\begin{enumerate}
    \item[$\bullet$] The set of monomials where at least one exponent $a_i$ is zero ($a_1 a_2 \dots a_k = 0$). This set forms a basis for the subspace $\PKzero$.
    \item[$\bullet$] The set of monomials where all exponents $a_i$ are strictly positive ($a_1 a_2 \dots a_k > 0$). This set forms a basis for the subspace $\PKplus$.
\end{enumerate}
Since these two subspaces are spanned by disjoint subsets of a basis for the entire space $\PK$, their sum is a direct sum. This decomposition respects the grading by degree, so for any degree $d$, we have a direct sum of vector spaces:
$$ \PKd = (\PKzero)_d \oplus (\PKplus)_d.$$

\subsection*{Step 2: Proving $\PKzero$ and $\PKplus$ are $\mathcal{A}$-submodules}

The crucial step is to show that the action of the Steenrod algebra preserves this decomposition. We need to show that if $f \in \PKzero$, then $Sq^i(f) \in \PKzero$ for any $i \ge 0$, and similarly for $\PKplus$. It is sufficient to check this for the basis monomials of each subspace.

\paragraph{\textbf{Invariance of $\mathcal{P}_k^0$:}} Let $m = x_1^{a_1} \dots x_k^{a_k}$ be a monomial in $\PKzero$. By definition, there exists at least one index $j \in \{1, \dots, k\}$ such that $a_j = 0$. Using the Cartan formula, the action of $Sq^i$ on $m$ is:
$$ Sq^i(m) = \sum_{i_1+\dots+i_k = i} Sq^{i_1}(x_1^{a_1}) \dots Sq^{i_j}(x_j^0) \dots Sq^{i_k}(x_k^{a_k}).$$
The term $Sq^{i_j}(x_j^0) = Sq^{i_j}(1)$ is non-zero only if $i_j=0$, in which case $Sq^0(1) = 1$. Therefore, any non-zero term in the sum must have the factor $Sq^0(1) = 1$ for the $j$-th variable. The resulting monomial in this term will not contain the variable $x_j$ (its exponent will remain zero). Since every monomial in the expansion of $Sq^i(m)$ is missing the variable $x_j$, the entire polynomial $Sq^i(m)$ belongs to $\PKzero$. Thus, $\PKzero$ is an $\mathcal{A}$-submodule of $\PK$.

\paragraph{\textbf{Invariance of $\mathcal{P}_k^+$:}} Let $m = x_1^{a_1} \dots x_k^{a_k}$ be a monomial in $\PKplus$. By definition, $a_j > 0$ for all $j \in \{1, \dots, k\}$. The action of $Sq^i$ is:
$$ Sq^i(m) = \sum_{i_1+\dots+i_k = i} Sq^{i_1}(x_1^{a_1}) \dots Sq^{i_k}(x_k^{a_k}).$$
For any non-zero term in this sum, each factor $Sq^{i_j}(x_j^{a_j})$ is a polynomial in the variable $x_j$. Since $a_j > 0$, any non-zero term in the expansion of $Sq^{i_j}(x_j^{a_j})$ will be a sum of positive powers of $x_j$. Consequently, every monomial in the final expression for $Sq^i(m)$ will contain only positive powers of all variables $x_1, \dots, x_k$. Therefore, $Sq^i(m) \in \PKplus$, and $\PKplus$ is an $\mathcal{A}$-submodule of $\PK$.

\subsection*{Step 3: Decomposition of the quotient space}

We have established a direct sum decomposition of $\mathcal{A}$-modules:
$$ \PK = \PKzero \oplus \PKplus $$
Let $\Aplus$ be the augmentation ideal of $\mathcal{A}$. The image of $\PK$ under the action of $\Aplus$ also decomposes accordingly:
$$ \Aplus\PK = \Aplus(\PKzero \oplus \PKplus) = (\Aplus\PKzero) \oplus (\Aplus\PKplus).$$
This holds because $\PKzero$ and $\PKplus$ are $\mathcal{A}$-submodules.

Now we consider the quotient space $\QPK = \PK / \Aplus\PK$ and obtain:
\begin{align*}
\QPK &= \PK / \Aplus\PK \\
&\cong (\PKzero \oplus \PKplus) / (\Aplus\PKzero \oplus \Aplus\PKplus) \\
&\cong (\PKzero / \Aplus\PKzero) \oplus (\PKplus / \Aplus\PKplus) \\
&\cong \QPKzero \oplus \QPKplus.
\end{align*}
Since this decomposition holds as graded modules, it must hold in each degree $d$. 
\end{proof}

\medskip

\section*{$\SigmaK$-invariants and $G_k$-invariants}

To study the space of $G_k$-invariants, we first need a concrete way to represent the action of $G_k$ on the polynomial algebra $\PK = \mathbb{Z}/2[x_1, \dots, x_k]$. It is a standard result in group theory that any group action can be understood by studying the action of its generators. The following operators, $\rho_j$, form a well-known generating set for $G_k$.

\begin{definition}[The Generating Operators]
For $1 \le j \le k$, we define the $\mathcal{A}$-homomorphism $\rho_j: \PK \to \PK$ by its action on the variables $\{x_1, \dots, x_k\}$. The definition is split into two cases.

\begin{enumerate}
    \item \textbf{Adjacent Transpositions ($1 \le j \le k-1$):}
    The operator $\rho_j$ swaps the adjacent variables $x_j$ and $x_{j+1}$ and fixes all others:
    $$ \rho_j(x_i) = 
    \begin{cases} 
        x_{j+1} & \text{if } i = j \\
        x_j & \text{if } i = j+1 \\
        x_i & \text{otherwise}
    \end{cases}
    $$
    
    \item \textbf{A Transvection ($j=k$):}
    The operator $\rho_k$ adds the variable $x_{k-1}$ to $x_k$ and fixes all others:
    $$ \rho_k(x_i) = 
    \begin{cases} 
        x_k + x_{k-1} & \text{if } i = k \\
        x_i & \text{if } i < k
    \end{cases}
    $$
\end{enumerate}
The action of any $\rho_j$ is extended to all polynomials in $\PK$ by the property that it is an algebra homomorphism.
\end{definition}


\begin{remark}[Algebraic Significance]
The choice of these specific operators is motivated by the structure of the general linear group over $\Ztwo$.

$\bullet$ An element of $\Sigma_k$ is a permutation of the basis vectors $\{x_1, \dots, x_k\}$. The operator $\rho_j$ for $j < k$ is defined as $\rho_j(x_j) = x_{j+1}$, $\rho_j(x_{j+1}) = x_j$, while fixing all other basis vectors. This corresponds exactly to the adjacent transposition $(j, j+1)$. Indeed, we provide an explicit description as follows:

\begin{enumerate}
    \item[(i)] \textbf{Any permutation is a product of transpositions.}
     Any permutation can be decomposed into a product of disjoint cycles. Each cycle $(c_1, c_2, \dots, c_m)$ can, in turn, be written as a product of transpositions:
    \[ (c_1, c_2, \dots, c_m) = (c_1, c_m) \circ \dots \circ (c_1, c_3) \circ (c_1, c_2).\]
    Therefore, to prove the generating set, we only need to show that any arbitrary transposition $(i, j)$ can be generated from adjacent transpositions.

    \item[(ii)] \textbf{Any transposition $(i, j)$ is a product of adjacent transpositions.}
    Assume $i < j$. We want to construct the transposition $(i, j)$, which swaps the elements at positions $i$ and $j$. This can be done constructively. A well-known formula is:
    \[ (i, j) = (\rho_{i} \circ \dots \circ \rho_{j-2}) \circ (\rho_{j-1}) \circ (\rho_{j-2} \circ \dots \circ \rho_{i}).\]
    This formula first moves the element at position $i$ to position $j$ via a series of rightward adjacent swaps, and then moves the element that was originally at position $j$ (which has been shifted) back to position $i$.
    
    \textbf{Example:} To create the transposition $(1, 3)$ in $\Sigma_3$, we can perform the sequence $\rho_1 \circ \rho_2 \circ \rho_1$.
\end{enumerate}
Since every permutation is a product of transpositions, and every transposition is a product of adjacent transpositions (the operators $\rho_j$ for $j < k$), \textit{the set $\{\rho_1, \dots, \rho_{k-1}\}$ generates the entire symmetric group $\Sigma_k$.}

\medskip

$\bullet$ \textit{The general linear group $G_k$ is generated by the set of operators $\{\rho_j \mid 1 \leq j \leq k\}$.} Indeed, it is known that any invertible matrix can be reduced to the identity matrix $I$ through a sequence of elementary column operations. Conversely, any invertible matrix can be formed by applying a sequence of elementary operations to the identity matrix. Over the field $\mathbb Z/2,$ these operations are:
\begin{itemize}
    \item \textbf{Type I:} Swapping two columns, $C_i \leftrightarrow C_j$.
    \item \textbf{Type III:} Adding one column to another, $C_i \to C_i + C_j$.
\end{itemize}
(Type II operations, scaling a column by a non-zero scalar, are trivial in $\mathbb{F}_2$ as 1 is the only non-zero scalar). We will now show that our set of generators can produce all of these elementary operations.

\begin{enumerate}
    \item[(i)] \textbf{Generating Type I Operations (Column Swaps):}
    As established in the previous section, the set of operators $\{\rho_1, \dots, \rho_{k-1}\}$ generates the entire symmetric group $\Sigma_k$. Each permutation corresponds to a permutation matrix, which performs the desired column swap operation. Thus, all Type I operations can be generated.

    \item[(ii)] \textbf{Generating Type III Operations (Column Additions):}
    The operator $\rho_k$ is defined by $\rho_k(x_k) = x_k + x_{k-1}$ and fixes other basis vectors. Its corresponding matrix $E$ performs the column operation $C_k \to C_k + C_{k-1}$.
    \[ E = \begin{pmatrix}
    1 & \cdots & 0 & 0 \\
    \vdots & \ddots & \vdots & \vdots \\
    0 & \cdots & 1 & 1 \\
    0 & \cdots & 0 & 1
    \end{pmatrix} \]
    This matrix $E$ is an elementary matrix known as a transvection. To generate an arbitrary transvection $E_{ij}$ that performs the operation $C_i \to C_i + C_j$, we use matrix conjugation.
    
    Let $P$ be a permutation matrix. The matrix $P E P^{-1}$ performs the same type of operation as $E$, but on the basis vectors permuted by $P$. To generate $E_{ij}$ (adding column $j$ to column $i$), we can choose a permutation matrix $P$ that maps the $k$-th basis vector to the $i$-th, and the $(k-1)$-th basis vector to the $j$-th. Such a permutation $P$ can be constructed from the generators $\{\rho_1, \dots, \rho_{k-1}\}$. Then, the conjugation
    \[ E_{ij} = P \circ E \circ P^{-1} \]
    produces the desired elementary matrix. Since we can generate any permutation matrix $P$, we can generate any elementary transvection $E_{ij}$.
\end{enumerate}
\end{remark}

\medskip

{\bf  The condition for invariance:} Using the set of generators described above, we can now state the precise condition for a class in the quotient space to be invariant. Recall that for a polynomial $u \in \PK(\omega)$, we write $[u]_\omega$ for its corresponding class in $\QPKomega$.
Because it is sufficient to check for invariance under a set of generators, we have the following precise criteria.

\begin{proposition}[Invariance Conditions]
Let $[u]_\omega$ be a class in $\QPKomega$ represented by a homogeneous polynomial $u \in \PK(\omega)$.
\begin{enumerate}
    \item The class $[u]_\omega$ is $\SigmaK$-invariant if and only if it is invariant under the action of all adjacent transpositions:
    $$ \rho_j(u) + u \equiv_\omega 0 \quad \text{for all } j \in \{1, \dots, k-1\}. $$

    \item The class $[u]_\omega$ is $\GLK$-invariant if and only if it is $\SigmaK$-invariant and is also invariant under the action of the transvection $\rho_k$. This is equivalent to the single, comprehensive condition:
    $$ \rho_j(u) + u \equiv_\omega 0 \quad \text{for all } j \in \{1, \dots, k\}. $$
\end{enumerate}
\end{proposition}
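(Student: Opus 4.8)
The plan is to deduce both equivalences from two facts that are already available. First, the whole group $\GLK$ acts on $\PK$, it is generated by the operators $\rho_j$, and each $\rho_j$ preserves the subspace $\Aplus\PK + \mathcal{P}_k^-(\omega)$; consequently the $\GLK$-action descends to a well-defined action on the quotient $\QPKomega$, and the subgroup $\SigmaK$ generated by $\rho_1,\dots,\rho_{k-1}$ acts likewise. Second, the stabilizer of a vector under a group action is a subgroup, so a vector is fixed by the entire group precisely when it is fixed by every member of a generating set. The remark preceding this proposition has already exhibited $\{\rho_1,\dots,\rho_{k-1}\}$ as a generating set for $\SigmaK$ and $\{\rho_1,\dots,\rho_k\}$ as one for $\GLK$; granting the descent to the quotient, both statements then fall out at once.

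First I would verify that each $\rho_j$ preserves $\Aplus\PK + \mathcal{P}_k^-(\omega)$, which is what is needed for the congruence $\equiv_\omega$ to be respected. Preservation of $\Aplus\PK$ is formal: $\rho_j$ is by definition an $\mathcal{A}$-homomorphism, so $\rho_j(Sq^i(h)) = Sq^i(\rho_j(h))$, and hence $\rho_j$ carries hit elements to hit elements. Preservation of the stratum $\mathcal{P}_k^-(\omega)$ --- and of $\PK(\omega)$ itself --- is precisely the statement, recalled earlier from Walker and Wood, that the $\GLK$-action does not raise the weight vector in the lexicographic order: applying $\rho_j$ to a monomial of weight $\omega'$ produces a sum of monomials each of weight $\le \omega'$. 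Since each $\rho_j$ is invertible on $\PK$ (being one of the chosen generators of $\GLK$) and preserves this subspace, the induced operator $\overline{\rho}_j$ on $\QPKomega$ is a linear automorphism, and the descended action of $\GLK$ (resp.\ $\SigmaK$) is the sought-after module structure.

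With this action in hand, statement (1) reads: $[u]_\omega$ is $\SigmaK$-fixed if and only if $\overline{\rho}_j([u]_\omega) = [u]_\omega$ for $j = 1, \dots, k-1$, i.e.\ if and only if $\rho_j(u) + u \equiv_\omega 0$ for those $j$. The implication from $\SigmaK$-invariance to the displayed conditions is trivial. Conversely, the set of elements of $\SigmaK$ fixing $[u]_\omega$ is a subgroup containing $\overline{\rho}_1, \dots, \overline{\rho}_{k-1}$; since these generate $\SigmaK$, this subgroup is all of $\SigmaK$, so $[u]_\omega$ is $\SigmaK$-invariant. Statement (2) is the same argument with the larger generating set: $[u]_\omega$ is $\GLK$-fixed if and only if its stabilizer in $\GLK$ contains $\overline{\rho}_1, \dots, \overline{\rho}_k$, which --- as $\{\rho_1,\dots,\rho_k\}$ generates $\GLK$ --- is equivalent to $\rho_j(u) + u \equiv_\omega 0$ for all $j \le k$. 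The first $k-1$ of these conditions is exactly $\SigmaK$-invariance by part (1), while the remaining one ($j = k$) is invariance under the transvection $\rho_k$, which is how the statement is phrased.

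The main obstacle is the descent to the quotient, and within it the claim that each $\rho_j$ preserves the weight filtration. The $\mathcal{A}$-linearity half is purely formal, but the assertion that every $\rho_j$ --- in particular the transvection $\rho_k$, which genuinely mixes exponents through carries in binary addition --- never increases the weight vector in the lexicographic order is a nontrivial combinatorial fact about dyadic expansions of integers; I would invoke it as a known result of Walker and Wood rather than reprove it here. Everything after that is routine: the principle ``fixed by the group $\iff$ fixed by a generating set'' is standard, and the identifications of the groups generated by $\{\rho_1,\dots,\rho_{k-1}\}$ and by $\{\rho_1,\dots,\rho_k\}$ with $\SigmaK$ and $\GLK$ were carried out in the remark immediately preceding the proposition.
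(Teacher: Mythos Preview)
Your proposal is correct and follows essentially the same approach as the paper: both arguments reduce the question to the standard principle that a vector is fixed by a group if and only if it is fixed by a generating set, combined with the identification of $\{\rho_1,\dots,\rho_{k-1}\}$ and $\{\rho_1,\dots,\rho_k\}$ as generating sets for $\SigmaK$ and $\GLK$ established in the preceding remark. Your treatment is in fact more complete than the paper's, which dispatches the result in a few lines without explicitly verifying that each $\rho_j$ preserves $\Aplus\PK + \mathcal{P}_k^-(\omega)$ so that the action descends to $\QPKomega$; you rightly flag this descent (particularly the weight-filtration preservation under the transvection $\rho_k$) as the only nontrivial ingredient and appropriately cite it as a known result.
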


\begin{proof}
The proof is identical to the one above. The set $\{\rho_1, \dots, \rho_k\}$ is a generating set for $\GLK$.
\begin{itemize}
    \item The ($\Rightarrow$) direction is trivial by definition.
    \item The ($\Leftarrow$) direction follows from the same inductive argument: if a class is invariant under all generators $\rho_1, \dots, \rho_k$, then it is invariant under any element $g \in \GLK$ since $g$ can be written as a finite composition of these generators.
\end{itemize}
This concludes the proof.
\end{proof}

\section*{Duality in the context of the Singer transfer}

The relationship between the space of $G_k$-invariants $[(Q\mathcal{P}_k)_d]^{G_k}$ and the space of $G_k$-coinvariants $[P_{\mathcal{A}}H_d(B(\Ztwo^k))]_{\GLK}$ is one of duality. This means they are vector spaces of the same dimension and there exists a non-degenerate pairing between them. Understanding this pairing is fundamental to connecting the hit problem to the domain of the Singer transfer. 

The starting point is the canonical duality between the homology and cohomology of the classifying space $B(\Ztwo^k)$.
\begin{itemize}
    \item The cohomology, $\mathcal{P}_k = H^*(B(\Ztwo^k); \Ztwo)$, is the polynomial algebra $\Ztwo[x_1, \dots, x_k]$.
    \item The homology, $H_*:=H_*(B(\Ztwo^k); \Ztwo)$, is the divided power algebra $\Gamma[a_1, \dots, a_k]$.
\end{itemize}
These two graded vector spaces are dual to each other. This foundational duality descends through the various algebraic structures imposed on these spaces.

{\bf Primitives and Cohits:} The hit problem involves the quotient space of ``cohit'', $Q\mathcal{P}_k = \mathcal{P}_k / \Aplus\mathcal{P}_k$. The dual concept in homology is the subspace of ``primitives'', $P_{\mathcal{A}}H_* \subseteq H_*$, which consists of all elements annihilated by the positive-degree Steenrod operations.

The subspace of primitives $P_{\mathcal{A}}H_d$ is precisely the annihilator of the subspace of hit elements $\Aplus(\mathcal{P}_k)_d$ under the pairing. By a standard result in linear algebra, the dual of a quotient space is the annihilator of the subspace by which we are quotienting. Therefore, we have an isomorphism:
$$ ((Q\mathcal{P}_k)_d)^* \cong P_{\mathcal{A}}H_d(B(\Ztwo)^k).$$
This means that any element $[f] \in (Q\mathcal{P}_k)_d$ can be seen as a linear functional acting on the space of primitives.

{\bf Invariants and Coinvariants:} The group $\GLK$ acts on both $\mathcal{P}_k$ and $H_*$, and these actions are dual to each other with respect to the pairing.
\begin{itemize}
    \item The space of invariants, $[(Q\mathcal{P}_k)_d]^{\GLK}$, is the subspace of cohit classes fixed by every element of $\GLK$.
    \item The space of coinvariants, $[P_{\mathcal{A}}H_d]_{\GLK}$, is the quotient of the space of primitives by the action of the group, i.e., $P_{\mathcal{A}}H_d / \text{span}\{g \cdot z - z \mid g \in \GLK, z \in P_{\mathcal{A}}H_d\}$.
\end{itemize}
It is a fundamental result in representation theory that for a finite group acting on a finite-dimensional vector space $V$ over a field whose characteristic does not divide the order of the group, the space of invariants $(V^G)$ is dual to the space of coinvariants $(V_G)$. In our context, this means:
$$ \left( [(Q\mathcal{P}_k)_d]^{\GLK} \right)^* \cong [P_{\mathcal{A}}H_d(B(\Ztwo^k))]_{\GLK}.$$

{\bf The explicit pairing $\langle [g], [f] \rangle$}:

\begin{definition}[The Dual Pairing]\label{dncdn}
Let $[f]$ be the class of a polynomial $f \in (\mathcal{P}_k)_d$ that represents a $\GLK$-invariant cohit. Let $[g]$ be the class of a polynomial $g \in H_d$ that represents a $\GLK$-coinvariant primitive element. The pairing $\langle \cdot, \cdot \rangle: H_*\times \mathcal P_k\longrightarrow \mathbb Z/2$ is defined as:
$$ \langle [g], [f] \rangle := \langle g, f \rangle.$$
where the pairing on the right is the canonical pairing between homology and cohomology.
\end{definition}

We must check that this definition makes sense and does not depend on the choice of representatives $f$ and $g$.
\begin{itemize}
    \item If we choose a different representative for $[f]$, say $f' = f + h$ where $h \in \Aplus\mathcal{P}_k$, then $\langle g, f' \rangle = \langle g, f \rangle + \langle g, h \rangle$. Since $g$ is a primitive element (it is annihilated by $\Aplus$), $\langle g, h \rangle = 0$. So the value is unchanged.
    \item If we choose a different representative for $[g]$, say $g' = g + (z - \sigma(z))$ for some $\sigma \in \GLK$, then $\langle g', f \rangle = \langle g, f \rangle + \langle z, f \rangle - \langle \sigma(z), f \rangle$. Since $f$ is a $\GLK$-invariant element, $\langle \sigma(z), f \rangle = \langle z, \sigma^{-1}(f) \rangle = \langle z, f \rangle$. Therefore, the extra terms cancel out.
\end{itemize}
Thus, the pairing is well-defined. 



The canonical pairing $\langle \cdot, \cdot \rangle$ between the homology $H_*$ and the cohomology $\PK$ is initially defined on their respective monomial bases. We can extend this definition to arbitrary polynomials by enforcing the property of \textbf{bilinearity}.

\begin{definition}[General Dual Pairing]
Let $u \in H_*$ and $v \in \PK$ be two homogeneous polynomials of the same degree. We can express them as linear combinations of their respective basis monomials:
\begin{itemize}
    \item Homology element: $u = \sum_{I} c_I \, a^I$
    \item Cohomology element: $v = \sum_{J} d_J \, x^J$
\end{itemize}
where $I = (i_1, \dots, i_k)$ and $J = (j_1, \dots, j_k)$ are multi-indices representing the exponents, $a^I = a_1^{(i_1)}\dots a_k^{(i_k)}$, $x^J = x_1^{j_1}\dots x_k^{j_k}$, and the coefficients $c_I, d_J$ are in $\Ztwo$.

The pairing $\langle u, v \rangle$ is defined by extending the basis pairing linearly:
$$ \langle u, v \rangle = \left\langle \sum_{I} c_I a^I, \sum_{J} d_J x^J \right\rangle := \sum_{I, J} c_I d_J \langle a^I, x^J \rangle.$$
\end{definition}

Due to the orthogonality of the monomial bases, the term $\langle a^I, x^J \rangle$ is 1 if and only if the multi-indices are identical ($I=J$), and 0 otherwise. This simplifies the double summation significantly. Most terms in the expansion will vanish. The only terms that survive are those where the homology basis monomial $a^I$ is paired with its exact dual cohomology basis monomial $x^I$.

This leads to the following general formula.

\begin{proposition}[Condition for the Pairing]\label{mddn}
Let $u = \sum_{I} c_I a^I$ and $v = \sum_{I} d_I x^I$ be polynomials in homology and cohomology, respectively, written in their dual monomial bases. The value of their pairing is the sum over all possible multi-indices $I$ of the product of their corresponding coefficients:
$$ \langle u, v \rangle = \sum_{I} c_I d_I \quad (\text{mod } 2).$$
Consequently, the pairing is equal to 1 if and only if there is an odd number of multi-indices $I$ for which both polynomials have a non-zero coefficient for the corresponding basis element ($c_I=1$ and $d_I=1$).
\end{proposition}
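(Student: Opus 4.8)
The plan is to derive the formula directly from the $\Ztwo$-bilinearity of the pairing, as set up in the preceding ``General Dual Pairing'' definition, together with the orthonormality of the two dual monomial bases. First I would write the two inputs in their respective bases, $u = \sum_I c_I a^I$ in the divided power algebra $H_* = \Gamma[a_1,\dots,a_k]$ and $v = \sum_J d_J x^J$ in $\PK = \Ztwo[x_1,\dots,x_k]$, and apply the defining bilinear expansion to obtain
$$\langle u, v\rangle = \sum_{I,J} c_I d_J \, \langle a^I, x^J\rangle.$$

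Next I would invoke the orthogonality relation recalled just before the statement: $\langle a^I, x^J\rangle = 1$ when the multi-indices $I$ and $J$ coincide and $0$ otherwise, i.e.\ $\langle a^I, x^J\rangle = \delta_{I,J}$ in $\Ztwo$. Substituting this collapses the double sum, since every off-diagonal term with $I \neq J$ vanishes, leaving only the diagonal contributions:
$$\langle u, v\rangle = \sum_{I} c_I d_I.$$
As all coefficients $c_I, d_I$ lie in $\Ztwo$, this sum is understood modulo $2$, which is exactly the asserted identity. I would then read off the combinatorial restatement: $c_I d_I = 1$ precisely when $c_I = 1$ and $d_I = 1$ simultaneously, i.e.\ when $x^I$ occurs in $v$ and its dual $a^I$ occurs in $u$; hence $\langle u,v\rangle = 1$ in $\Ztwo$ if and only if the number of such indices $I$ is odd.

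The argument is essentially a one-line manipulation, so there is no substantial obstacle; the only point requiring care — a bookkeeping matter rather than a genuine difficulty — is to pin down that the canonical homology--cohomology pairing on monomial bases is normalized so that $\langle a_1^{(i_1)}\cdots a_k^{(i_k)},\, x_1^{j_1}\cdots x_k^{j_k}\rangle = \delta_{I,J}$, with the divided power generators $a_m^{(i)}$ dual to the ordinary monomials $x_m^{i}$. Once this convention is fixed (consistent with the pairing used implicitly in Theorem~\ref{dlCH} and Definition~\ref{dncdn}), the collapse of the double sum and the mod-$2$ reduction give the result immediately, and the final ``odd number of indices'' phrasing is just the translation of a sum of $0$'s and $1$'s in $\Ztwo$ being nonzero.
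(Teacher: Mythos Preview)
Your proposal is correct and follows essentially the same approach as the paper's proof: expand bilinearly to a double sum $\sum_{I,J} c_I d_J \langle a^I, x^J\rangle$, collapse via the orthogonality $\langle a^I, x^J\rangle = \delta_{I,J}$, and then interpret the resulting $\Ztwo$-sum $\sum_I c_I d_I$ combinatorially. The paper's argument is structured in the same three steps, so there is no meaningful difference in method.
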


\begin{proof}
The proof relies on two fundamental properties: the bilinearity of the pairing and the orthogonality of the chosen monomial bases.

{\bf Step 1: Expanding the Pairing by Bilinearity.} The pairing $\langle \cdot, \cdot \rangle: H_* \times \PK \to \Ztwo$ is a bilinear map. This means it is linear in each of its two arguments. Given the expressions for $u$ and $v$:
$$ u = \sum_{I} c_I a^I \quad \text{and} \quad v = \sum_{J} d_J x^J.$$
we can expand the pairing $\langle u, v \rangle$ as follows. First, using linearity in the first argument:
$$ \langle u, v \rangle = \left\langle \sum_{I} c_I a^I, v \right\rangle = \sum_{I} c_I \langle a^I, v \rangle.$$
Next, we apply linearity in the second argument to each term $\langle a^I, v \rangle$:
$$ \langle a^I, v \rangle = \left\langle a^I, \sum_{J} d_J x^J \right\rangle = \sum_{J} d_J \langle a^I, x^J \rangle.$$
Substituting this back into the first expansion, we obtain a double summation:
$$ \langle u, v \rangle = \sum_{I} c_I \left( \sum_{J} d_J \langle a^I, x^J \rangle \right) = \sum_{I, J} c_I d_J \langle a^I, x^J \rangle.$$

{\bf Step 2: Applying the Orthogonality of the Monomial Bases.} The monomial bases $\{a^I\}$ for homology and $\{x^J\}$ for cohomology are dual to each other. This orthogonality is expressed by the value of their pairing:
$$ \langle a^I, x^J \rangle = \delta_{I,J} =
\begin{cases} 
    1 & \text{if } I = J \\
    0 & \text{if } I \neq J.
\end{cases} $$
where $\delta_{I,J}$ is the Kronecker delta.

Now, we substitute this orthogonality condition into our double summation. The term $\langle a^I, x^J \rangle$ will be zero for every pair of multi-indices where $I \neq J$. The only terms that survive the summation are those for which the multi-indices are identical, i.e., $I = J$.
$$ \langle u, v \rangle = \sum_{I} \sum_{J} c_I d_J \delta_{I,J}.$$
This reduces the double summation to a single summation over one index, say $I$:
$$ \langle u, v \rangle = \sum_{I} c_I d_I \langle a^I, x^I \rangle = \sum_{I} c_I d_I (1) = \sum_{I} c_I d_I.$$

{\bf Step 3: Interpretation of the Result.}
We have derived the formula:
$$ \langle u, v \rangle = \sum_{I} c_I d_I \quad (\text{mod } 2).$$
The coefficients $c_I$ and $d_I$ belong to the field $\Ztwo$, which has only two elements, $\{0, 1\}$. Therefore, the product $c_I d_I$ can only be non-zero if both $c_I = 1$ and $d_I = 1$.
$$ c_I d_I = 
\begin{cases} 
    1 & \text{if } c_I=1 \text{ and } d_I=1 \\
    0 & \text{otherwise}.
\end{cases} $$
The final sum $\sum_{I} c_I d_I$ is performed modulo 2. A sum of 0s and 1s is equal to 1 (mod 2) if and only if it contains an odd number of 1s. Therefore, $\langle u, v \rangle = 1$ if and only if there is an odd number of multi-indices $I$ for which both the coefficient of $a^I$ in $u$ and the coefficient of its dual $x^I$ in $v$ are 1. This concludes the proof of the proposition.
\end{proof}

We are asked to compute the value of the dual pairing $\langle x, f \rangle$, where $x$ is an element in homology and $f$ is an element in cohomology. The calculation is performed over the field $\Ztwo$.

The general principle for this pairing is:
$$ \langle x, f \rangle = \left\langle \sum_{I} c_I a^I, \sum_{J} d_J x^J \right\rangle = \sum_{I} c_I d_I \pmod{2}.$$
This means the pairing evaluates to 1 if and only if there is an odd number of monomial terms that are dual to each other and appear in both $x$ and $f$. We will systematically check each monomial term in $f$ to see if its dual exists in $x$.

\medskip

{\bf Example:} We now reconsider the homology element $x\in P_{\mathcal A}H_{33}(B(\mathbb Z/2)^4),$ which is the preimage of $p_0$ as given in Section~\ref{s3}, but expressed using the variable order $(a_1, a_2, a_3, a_4)$. Accordingly, the exponents of the variables $a_i$ will follow this same order:

\medskip

\[
\begin{aligned}
x =\ & a_1^{(7)} a_2^{(7)} a_3^{(5)} a_4^{(14)} + a_1^{(7)} a_2^{(9)} a_3^{(3)} a_4^{(14)} + a_1^{(11)} a_2^{(5)} a_3^{(3)} a_4^{(14)} + a_1^{(13)} a_2^{(3)} a_3^{(3)} a_4^{(14)} \\
& + a_1^{(7)} a_2^{(7)} a_3^{(6)} a_4^{(13)} + a_1^{(7)} a_2^{(10)} a_3^{(3)} a_4^{(13)} + a_1^{(11)} a_2^{(6)} a_3^{(3)} a_4^{(13)} + a_1^{(14)} a_2^{(3)} a_3^{(3)} a_4^{(13)} \\
& + a_1^{(7)} a_2^{(9)} a_3^{(6)} a_4^{(11)} + a_1^{(11)} a_2^{(5)} a_3^{(6)} a_4^{(11)} + a_1^{(13)} a_2^{(3)} a_3^{(6)} a_4^{(11)} + a_1^{(7)} a_2^{(10)} a_3^{(5)} a_4^{(11)} \\
& + a_1^{(11)} a_2^{(6)} a_3^{(5)} a_4^{(11)} + a_1^{(14)} a_2^{(3)} a_3^{(5)} a_4^{(11)} + a_1^{(7)} a_2^{(7)} a_3^{(9)} a_4^{(10)} + a_1^{(7)} a_2^{(9)} a_3^{(7)} a_4^{(10)} \\
& + a_1^{(11)} a_2^{(5)} a_3^{(7)} a_4^{(10)} + a_1^{(13)} a_2^{(3)} a_3^{(7)} a_4^{(10)} + a_1^{(7)} a_2^{(11)} a_3^{(5)} a_4^{(10)} + a_1^{(7)} a_2^{(13)} a_3^{(3)} a_4^{(10)} \\
& + a_1^{(7)} a_2^{(7)} a_3^{(10)} a_4^{(9)} + a_1^{(7)} a_2^{(10)} a_3^{(7)} a_4^{(9)} + a_1^{(11)} a_2^{(6)} a_3^{(7)} a_4^{(9)} + a_1^{(14)} a_2^{(3)} a_3^{(7)} a_4^{(9)} \\
& + a_1^{(7)} a_2^{(11)} a_3^{(6)} a_4^{(9)} + a_1^{(7)} a_2^{(14)} a_3^{(3)} a_4^{(9)} + a_1^{(7)} a_2^{(9)} a_3^{(10)} a_4^{(7)} + a_1^{(11)} a_2^{(5)} a_3^{(10)} a_4^{(7)} \\
& + a_1^{(13)} a_2^{(3)} a_3^{(10)} a_4^{(7)} + a_1^{(7)} a_2^{(10)} a_3^{(9)} a_4^{(7)} + a_1^{(11)} a_2^{(6)} a_3^{(9)} a_4^{(7)} + a_1^{(14)} a_2^{(3)} a_3^{(9)} a_4^{(7)} \\
& + a_1^{(7)} a_2^{(13)} a_3^{(6)} a_4^{(7)} + a_1^{(7)} a_2^{(14)} a_3^{(5)} a_4^{(7)} + a_1^{(7)} a_2^{(7)} a_3^{(13)} a_4^{(6)} + a_1^{(7)} a_2^{(9)} a_3^{(11)} a_4^{(6)} \\
& + a_1^{(11)} a_2^{(5)} a_3^{(11)} a_4^{(6)} + a_1^{(13)} a_2^{(3)} a_3^{(11)} a_4^{(6)} + a_1^{(11)} a_2^{(7)} a_3^{(9)} a_4^{(6)} + a_1^{(13)} a_2^{(7)} a_3^{(7)} a_4^{(6)} \\
& + a_1^{(11)} a_2^{(11)} a_3^{(5)} a_4^{(6)} + a_1^{(11)} a_2^{(13)} a_3^{(3)} a_4^{(6)} + a_1^{(7)} a_2^{(7)} a_3^{(14)} a_4^{(5)} + a_1^{(7)} a_2^{(10)} a_3^{(11)} a_4^{(5)} \\
& + a_1^{(11)} a_2^{(6)} a_3^{(11)} a_4^{(5)} + a_1^{(14)} a_2^{(3)} a_3^{(11)} a_4^{(5)} + a_1^{(11)} a_2^{(7)} a_3^{(10)} a_4^{(5)} + a_1^{(14)} a_2^{(7)} a_3^{(7)} a_4^{(5)} \\
& + a_1^{(11)} a_2^{(11)} a_3^{(6)} a_4^{(5)} + a_1^{(11)} a_2^{(14)} a_3^{(3)} a_4^{(5)} + a_1^{(7)} a_2^{(9)} a_3^{(14)} a_4^{(3)} + a_1^{(11)} a_2^{(5)} a_3^{(14)} a_4^{(3)} \\
& + a_1^{(13)} a_2^{(3)} a_3^{(14)} a_4^{(3)} + a_1^{(7)} a_2^{(10)} a_3^{(13)} a_4^{(3)} + a_1^{(11)} a_2^{(6)} a_3^{(13)} a_4^{(3)} + a_1^{(14)} a_2^{(3)} a_3^{(13)} a_4^{(3)} \\
& + a_1^{(13)} a_2^{(7)} a_3^{(10)} a_4^{(3)} + a_1^{(14)} a_2^{(7)} a_3^{(9)} a_4^{(3)} + a_1^{(13)} a_2^{(11)} a_3^{(6)} a_4^{(3)} + a_1^{(14)} a_2^{(11)} a_3^{(5)} a_4^{(3)} \\
& + a_1^{(13)} a_2^{(14)} a_3^{(3)} a_4^{(3)} + a_1^{(14)} a_2^{(13)} a_3^{(3)} a_4^{(3)}.
\end{aligned}
\]

\medskip

We also reconsider the cohomology element $f \in ((Q\mathcal P_4)_{33})^{G_4},$ as mentioned earlier:
 \begin{align*}
f = G_4\text{-Invariant}_1= & \ x_1 x_2 x_3 x_4^{30} + x_1 x_2 x_3^3 x_4^{28} + x_1 x_2^3 x_3 x_4^{28} + x_1 x_2^3 x_3^4 x_4^{25} + x_1 x_2^7 x_3^{11} x_4^{14} \\
&+ x_1 x_2^7 x_3^{14} x_4^{11} + x_1^3 x_2 x_3 x_4^{28} + x_1^3 x_2 x_3^4 x_4^{25} + x_1^3 x_2^5 x_3 x_4^{24} \\
&+ x_1^3 x_2^5 x_3^{11} x_4^{14} + x_1^3 x_2^5 x_3^{14} x_4^{11} + x_1^7 x_2 x_3^{11} x_4^{14} + x_1^7 x_2 x_3^{14} x_4^{11} \\
&+ x_1^7 x_2^7 x_3^{11} x_4^8 + x_1^7 x_2^7 x_3^8 x_4^{11} + x_1^7 x_2^7 x_3^9 x_4^{10}.
\end{align*}
We check each of the 16 monomial terms in $f$ to see if its dual counterpart exists as a term in $x$. The pairing for a single term is 1 if a match is found, and 0 otherwise.

\newpage
\begin{longtable}{|c|l|c|}
\hline
\textbf{\#} & \textbf{Term in $f$ ($x^J$)} & \textbf{Match found in $x$ ($\langle a^J, f \rangle$)} \\
\hline
\endhead
1 & $x_1x_2x_3 x_4^{30}$ & No \\
2 & $x_1x_2 x_3^3 x_4^{28}$ & No \\
3 & $x_1 x_2^3 x_3x_4^{28}$ & No \\
4 & $x_1 x_2^3 x_3^4 x_4^{25}$ & No \\
5 & $x_1 x_2^7 x_3^{11} x_4^{14}$ & No \\
6 & $x_1 x_2^7 x_3^{14} x_4^{11}$ & No \\
7 & $x_1^3 x_2 x_3x_4^{28}$ & No \\
8 & $x_1^3 x_2 x_3^4 x_4^{25}$ & No \\
9 & $x_1^3 x_2^5 x_3x_4^{24}$ & No \\
10 & $x_1^3 x_2^5 x_3^{11} x_4^{14}$ & No \\
11 & $x_1^3 x_2^5 x_3^{14} x_4^{11}$ & No \\
12 & $x_1^7 x_2 x_3^{11} x_4^{14}$ & No \\
13 & $x_1^7 x_2x_3^{14} x_4^{11}$ & No \\
14 & $x_1^7 x_2^7 x_3^{11} x_4^{8}$ & No \\
15 & $x_1^7 x_2^7 x_3^{8} x_4^{11}$ & No \\
\textbf{16} & $\boldsymbol{x_1^7 x_2^7 x_3^9 x_4^{10}}$ & \textbf{Yes} \\
\hline
\end{longtable}

The only matching term is the 16th monomial in $f$, $x_1^7 x_2^7 x_3^9 x_4^{10}$. Its dual, $a_1^{(7)}a_2^{(7)}a_3^{(9)}a_4^{(10)}$, is indeed present as a term in the polynomial $x$. The sum of the products of the coefficients is:
$$ \langle x, f \rangle = \sum_{I} c_I d_I = 0 + 0 + \dots + 0 + (1 \cdot 1) = 1.$$
There is exactly one matching term between the two polynomials. Since the number of matches (one) is odd, the pairing is non-zero. 

Thus, by Proposition \ref{mddn}, we get $([f])^* = [x].$

\section*{Computation of bases of invariant spaces}

\medskip

We are now ready to present our algorithm, which implements a sophisticated, multi-stage procedure to tackle this problem. The algorithm is built upon methods developed in previous works, including those by the present authors. This algorithm leverages a ``divide and conquer'' strategy by first partitioning the problem by weight vectors and then by the $\SigmaK$-connected components of the basis, which are determined by the action of the group generators. Throughout this process, a globally computed reducer map is used to ensure mathematical consistency.

Let \(d\) be a positive integer, and let \((\mathcal{P}_k)_d\) be the subspace of \(\mathcal{P}_k\) consisting of all homogeneous polynomials of degree~\(d\). We construct an explicit algorithm for computing a basis and the dimension of the invariant space \([(Q\mathcal{P}_k)_d]^{G_k}\). The overall procedure is divided into six stages, each of which is described by a separate algorithm as detailed below:

\medskip

\noindent\rule{\textwidth}{0.4pt}
\begin{center}
\textbf{Algorithm 1: Global Admissible Basis and Reducer Construction (Parallelized)}
\end{center}
\noindent\rule{\textwidth}{0.4pt}
\begin{algorithmic}[1]
\Require Degree $d$, number of variables $k$, polynomial ring $\mathcal{P}_k = \mathbb{F}_2[x_1, \ldots, x_k]$
\Ensure Admissible basis $\mathcal{B}_d$ and decomposition map $\phi: (\mathcal{P}_k)_d \to \text{span}_{\mathbb{F}_2}(\mathcal{B}_d)$

\State $\texttt{cache\_file} \leftarrow \texttt{cache\_full\_reducer\_k} + k + \texttt{\_d} + d + \texttt{.pkl}$
\If{$\texttt{cache\_file}$ exists}
    \State \Return Load cached basis and reducer
\EndIf

\State Initialize ordered monomials $\mathcal{M}_d$ sorted by $\textsc{CompareMonomials}(m_1, m_2, k)$
\State $n \leftarrow |\mathcal{M}_d|$
\State Create monomial-to-index map: $\text{mono\_map}: \mathcal{M}_d \to \{0, 1, \ldots, n-1\}$
\State Initialize Steenrod function: $\text{sq\_func} \leftarrow \textsc{GetSqFunction}(\mathcal{P}_k)$

\State \textbf{Parallel Phase 1: Hit Matrix Construction}
\State Generate hit matrix tasks: $\mathcal{T} = \{(k_{op}, g) : k_{op} = 2^i, \deg(g) = d - k_{op} \geq 0\}$
\State Initialize worker pool with \textsc{InitWorkerHitMatrix}$(\mathcal{P}_k, \text{sq\_func}, \text{mono\_map})$
\State Set chunk size: $\text{chunk\_size} = \max(1, |\mathcal{T}|/(4 \times \text{cpu\_count}))$

\State $\text{valid\_hit\_results} \leftarrow [\,]$
\For{$(k_{op}, g) \in \mathcal{T}$ processed in parallel chunks}
    \State $hp \leftarrow \text{sq\_func}(k_{op}, g)$ \Comment{hp: hit polynomial}
    \If{$hp \neq 0$}
        \State Extract indices where $hp$ has odd coefficients
        \State Append non-empty index lists to $\text{valid\_hit\_results}$
    \EndIf
\EndFor

\State Construct sparse matrix: $M_{\text{hit}} \leftarrow \text{SparseMatrix}(n, |\text{valid\_hit\_results}|, \text{valid\_hit\_results})$

\State \textbf{Phase 2: Echelon Form and Admissible Basis Extraction}
\State $E \leftarrow \text{EchelonForm}(M_{\text{hit}} \mid I_n)$ \Comment{Augmented with identity matrix}
\State $\text{pivot\_positions} \leftarrow \text{PivotPositions}(E)$
\State $\mathcal{B}_d \leftarrow \{ \text{ordered\_monos}[p - |M_{\text{hit}}|] \mid p \in \text{pivot\_positions} \text{ and } p \ge |M_{\text{hit}}| \}$

\State \textbf{Phase 3: Parallel Decomposition Map Construction}
\State Create basis map: $\text{basis\_map}: \mathcal{B}_d \to \{0, 1, \ldots, |\mathcal{B}_d|-1\}$
\State Initialize decomposition map: $\phi: \mathcal{M}_d \to \mathbb{F}_2^{|\mathcal{B}_d|}$
\State Set unit vectors for admissible monomials: $\phi(b_i) = e_i$ for $b_i \in \mathcal{B}_d$

\State Build solver matrix: $S \leftarrow M_{\text{hit}} \mid \text{AdmissibleMatrix}(\mathcal{B}_d)$
\State Prepare solver data: $\text{solver\_data} \leftarrow \{$ncols$: |S|$, entries$: S.\text{dict}()\}$
\State Create decomposition tasks: $\text{tasks} \leftarrow \{i : m_i \notin \mathcal{B}_d\}$

\State Initialize worker pool with \textsc{InitWorkerDecompose}$(\text{solver\_data}, n, |M_{\text{hit}}|)$
\For{$i \in \text{tasks}$ processed in parallel with chunk size 2000}
    \State Solve: $S \cdot \vec{c} = e_i$ where $e_i$ is unit vector at position $i$
    \State Extract: $\phi(m_i) \leftarrow \vec{c}[|M_{\text{hit}}|:]$ \Comment{Projection to admissible coordinates}
\EndFor

\State Cache result: $\textsc{Save}(\texttt{cache\_file}, (\mathcal{B}_d, \phi))$
\State \Return $(\mathcal{B}_d, \phi)$
\end{algorithmic}

\vspace{0.3cm}

\noindent\rule{\textwidth}{0.4pt}
\begin{center}
\textbf{Algorithm 2: Weight-wise Component Analysis for $\Sigma_k$-Invariants}
\end{center}
\noindent\rule{\textwidth}{0.4pt}
\vspace{0.3cm}
\begin{algorithmic}[1]
\Require Admissible basis $\mathcal{B}_d$, decomposition map $\phi$, number of variables $k$
\Ensure $\Sigma_k$-invariants organized by weight vector and component

\State \textbf{Phase 1: Weight Vector Grouping}
\State Initialize: $\text{groups\_by\_weight}: \omega \mapsto \{m \in \mathcal{B}_d : \textsc{GetWeightVector}(m, k) = \omega\}$
\For{monomial $m \in \mathcal{B}_d$}
    \State $\omega \leftarrow \textsc{GetWeightVector}(m, k)$ \Comment{Compute binary weight vector}
    \State Add $m$ to $\text{groups\_by\_weight}[\omega]$
\EndFor

\State Initialize: $\text{invariants\_by\_component\_and\_weight} \leftarrow \{\}$
\State Initialize: $\text{detailed\_component\_counter} \leftarrow 0$
\State Create global symbol map: $\text{global\_mono\_symbol\_map}: m \mapsto a_{d,i}$ for indexing

\For{weight vector $\omega$ in sorted order}
    \State $\mathcal{B}_\omega \leftarrow \text{groups\_by\_weight}[\omega]$
    \If{$\mathcal{B}_\omega = \emptyset$} \textbf{continue} \EndIf
    
    \State \textbf{Phase 2: Connected Component Discovery}
    \State $\text{components} \leftarrow \textsc{FindComponentsByAdjacencyMatrix}(\mathcal{B}_\omega, k, \mathcal{P}_k, \mathcal{B}_d, \phi)$
    
    \For{component $\mathcal{C} \in \text{components}$}
        \If{$\mathcal{C} = \emptyset$} \textbf{continue} \EndIf
        \State $\text{detailed\_component\_counter} \leftarrow \text{detailed\_component\_counter} + 1$
        
        \State \textbf{Phase 3: Component Invariant Computation}
        \State $(\text{kernel\_lists}, \text{matrices\_lists}) \leftarrow$ 
        \State \quad $\textsc{ComputeInvariantsForComponentRaw}(\mathcal{C}, k, \mathcal{P}_k, \mathcal{B}_d, \phi)$
        
        \State \textbf{Phase 4: Detailed Proof Generation with Roman Numerals}
        \State $\text{component\_invariants} \leftarrow \textsc{GenerateComponentProofDetailed}($
        \State \quad $\mathcal{C}, \text{kernel\_lists}, \text{matrices\_lists}, k, \mathcal{P}_k,$
        \State \quad $\text{global\_mono\_symbol\_map}, \text{detailed\_component\_counter}, d)$
        
        \If{$\text{component\_invariants} \neq \emptyset$}
            \State Store: $\text{invariants\_by\_component\_and\_weight}[\omega][\text{detailed\_component\_counter}] \leftarrow \text{component\_invariants}$
        \EndIf
    \EndFor
\EndFor

\State \Return $\text{invariants\_by\_component\_and\_weight}$
\end{algorithmic}
\vspace{0.3cm}

\noindent\rule{\textwidth}{0.4pt}
\begin{center}
\textbf{Algorithm 3: Find $\Sigma_k$-Components via Decomposition-Based Adjacency}
\end{center}
\noindent\rule{\textwidth}{0.4pt}
\vspace{0.3cm}
\begin{algorithmic}[1]
\Require Subspace basis $\mathcal{B}_\omega$, number of variables $k$, polynomial ring $\mathcal{P}$, global basis $\mathcal{B}_d$, decomposition map $\phi$
\Ensure List of $\Sigma_k$-connected components

\State $N \leftarrow |\mathcal{B}_\omega|$
\If{$N = 0$} \Return $\emptyset$ \EndIf

\State Sort basis lexicographically: $\text{sorted\_basis} \leftarrow \textsc{Sort}(\mathcal{B}_\omega)$
\State Create subspace index map: $\text{subspace\_map}: \text{sorted\_basis}[i] \mapsto i$
\State Initialize adjacency matrix: $A \in \mathbb{F}_2^{N \times N}$ (sparse)

\For{$i = 0, 1, \ldots, N-1$}
    \State $A[i,i] \leftarrow 1$ \Comment{Self-adjacency for connectivity}
    \State $m_i \leftarrow \text{sorted\_basis}[i]$
    
    \For{$j = 1, 2, \ldots, k-1$} \Comment{$\Sigma_k$ action generators}
        \State $\text{transformed\_poly} \leftarrow \textsc{ApplyRho}(m_i, j, k)$ \Comment{Apply $\rho_j$ operator}
        \State $\text{coord\_vec} \leftarrow \textsc{DecomposeGloballyFast}(\text{transformed\_poly}, \phi, \mathcal{P})$
        
        \If{$\text{coord\_vec} \neq \text{null}$}
            \For{$\ell = 0, 1, \ldots, |\mathcal{B}_d|-1$}
                \If{$\text{coord\_vec}[\ell] = 1$}
                    \State $b_\text{global} \leftarrow \mathcal{B}_d[\ell]$
                    \If{$b_\text{global} \in \text{subspace\_map}$} \Comment{Check if in current subspace}
                        \State $k_\text{local} \leftarrow \text{subspace\_map}[b_\text{global}]$
                        \State $A[i, k_\text{local}] \leftarrow 1$, $A[k_\text{local}, i] \leftarrow 1$ \Comment{Symmetric adjacency}
                    \EndIf
                \EndIf
            \EndFor
        \EndIf
    \EndFor
\EndFor

\State $G \leftarrow \textsc{Graph}(A)$ \Comment{Create graph from adjacency matrix}
\State $\text{component\_indices} \leftarrow \textsc{ConnectedComponents}(G)$
\State $\text{components} \leftarrow [\,]$
\For{index set $C \in \text{component\_indices}$}
    \State $\text{component} \leftarrow [\text{sorted\_basis}[i] : i \in \textsc{Sort}(C)]$
    \State Append $\text{component}$ to $\text{components}$
\EndFor

\State Sort components by lexicographic order of first element
\State \Return $\text{components}$
\end{algorithmic}
\vspace{0.3cm}

\noindent\rule{\textwidth}{0.4pt}
\begin{center}
\textbf{Algorithm 4: Compute $\Sigma_k$-Invariants for Single Component}
\end{center}
\noindent\rule{\textwidth}{0.4pt}
\vspace{0.3cm}
\begin{algorithmic}[1]
\Require Component basis $\mathcal{C}$, number of variables $k$, polynomial ring $\mathcal{P}$, global basis $\mathcal{B}_d$, decomposition map $\phi$
\Ensure Kernel basis vectors and constraint system matrices

\If{$\mathcal{C} = \emptyset$} \Return $([\,], [\,])$ \EndIf

\State $N \leftarrow |\mathcal{C}|$
\State Sort component basis: $\text{sorted\_component\_basis} \leftarrow \textsc{Sort}(\mathcal{C})$
\State Create component index map: $\text{component\_basis\_map}: \text{sorted\_component\_basis}[i] \mapsto i$

\State \textbf{Build $\Sigma_k$-Invariance Constraint System}
\State Initialize: $\text{system\_matrices} \leftarrow [\,]$

\For{$j = 1, 2, \ldots, k-1$} \Comment{For each $\Sigma_k$ generator}
    \State Initialize: $T_j \in \mathbb{F}_2^{N \times N}$ (sparse)
    
    \For{$i = 0, 1, \ldots, N-1$} \Comment{For each component basis element}
        \State $\text{mono\_input} \leftarrow \text{sorted\_component\_basis}[i]$
        \State $\text{transformed\_poly} \leftarrow \textsc{ApplyRho}(\text{mono\_input}, j, k)$
        \State $\text{global\_coord\_vec} \leftarrow \textsc{DecomposeGloballyFast}(\text{transformed\_poly}, \phi, \mathcal{P})$
        
        \If{$\text{global\_coord\_vec} \neq \text{null}$}
            \For{$\ell = 0, 1, \ldots, |\mathcal{B}_d|-1$}
                \If{$\text{global\_coord\_vec}[\ell] = 1$}
                    \State $b_\text{global} \leftarrow \mathcal{B}_d[\ell]$
                    \If{$b_\text{global} \in \text{component\_basis\_map}$}
                        \State $\text{row\_idx} \leftarrow \text{component\_basis\_map}[b_\text{global}]$
                        \State $T_j[\text{row\_idx}, i] \leftarrow 1$ \Comment{Record transformation effect}
                    \EndIf
                \EndIf
            \EndFor
        \EndIf
    \EndFor
    
    \State Append $(T_j + I_N)$ to $\text{system\_matrices}$ \Comment{$(\rho_j + I)$ constraint}
\EndFor

\If{$\text{system\_matrices} = [\,]$} \Return $([\,], [\,])$ \EndIf

\State \textbf{Solve $\Sigma_k$-Invariance System}
\State $A_\sigma \leftarrow \textsc{BlockMatrix}(\text{system\_matrices})$ \Comment{Vertical concatenation}
\State $\text{kernel\_basis\_vectors} \leftarrow \textsc{RightKernel}(A_\sigma).\textsc{Basis}()$

\State Convert to lists: $\text{kernel\_lists} \leftarrow [\textsc{List}(v) : v \in \text{kernel\_basis\_vectors}]$
\State Convert matrices: $\text{matrices\_lists} \leftarrow [[\textsc{List}(\text{row}) : \text{row} \in M] : M \in \text{system\_matrices}]$

\State \Return $(\text{kernel\_lists}, \text{matrices\_lists})$
\end{algorithmic}
\vspace{0.3cm}

\noindent\rule{\textwidth}{0.4pt}
\begin{center}
\textbf{Algorithm 5: Generate Meaningful Linear Combinations with Complexity Optimization}
\end{center}
\noindent\rule{\textwidth}{0.4pt}
\vspace{0.3cm}
\begin{algorithmic}[1]
\Require Kernel basis vectors $\text{kernel\_lists}$, component size $N$
\Ensure Optimal linear combinations ranked by complexity and term count

\State $\text{num\_kernels} \leftarrow |\text{kernel\_lists}|$
\If{$\text{num\_kernels} = 0$} \Return $[\,]$ \EndIf
\State $\text{num\_vars} \leftarrow |\text{kernel\_lists}[0]|$

\State \textbf{Phase 1: Generate All Non-trivial Combinations}
\State Initialize: $\text{all\_combinations} \leftarrow [\,]$

\For{$i = 1, 2, \ldots, 2^{\text{num\_kernels}} - 1$} \Comment{Skip empty combination}
    \State $\text{coeffs} \leftarrow \textsc{BinaryExpansion}(i, \text{num\_kernels})$ \Comment{Combination coefficients}
    \State Initialize: $\text{vector} \leftarrow \textbf{0} \in \mathbb{F}_2^{\text{num\_vars}}$
    
    \For{$j = 0, 1, \ldots, \text{num\_kernels}-1$}
        \If{$\text{coeffs}[j] = 1$}
            \State $\text{vector} \leftarrow \text{vector} + \text{kernel\_lists}[j]$ \Comment{Addition in $\mathbb{F}_2$}
        \EndIf
    \EndFor
    
    \State $\text{num\_terms} \leftarrow \|\text{vector}\|_1$ \Comment{Count of 1's (Hamming weight)}
    
    \If{$\text{num\_terms} > 0$} \Comment{Non-zero combination}
        \State $\text{complexity} \leftarrow \|\text{coeffs}\|_1$ \Comment{Number of kernel vectors used}
        \State Add $\{\text{coeffs}, \text{complexity}, \text{num\_terms}, \text{vector}\}$ to $\text{all\_combinations}$
    \EndIf
\EndFor

\State \textbf{Phase 2: Selection Strategy Based on Component Size}

\If{$N < 30$} \Comment{Small component: exact optimization}
    \State Sort $\text{all\_combinations}$ by $(\text{complexity}, \text{num\_terms})$ \Comment{Prefer simple, small}
    \State Initialize: $\text{final\_basis} \leftarrow [\,]$, $\text{basis\_coeffs\_matrix} \leftarrow [\,]$
    
    \For{combination $c \in \text{all\_combinations}$}
        \If{$|\text{final\_basis}| = \text{num\_kernels}$} \textbf{break} \EndIf
        
        \State $\text{test\_matrix} \leftarrow \textsc{Matrix}(\text{basis\_coeffs\_matrix} \cup \{c.\text{coeffs}\})$
        \If{$\textsc{Rank}(\text{test\_matrix}) = |\text{basis\_coeffs\_matrix}| + 1$}
            \State Append $c$ to $\text{final\_basis}$
            \State Append $c.\text{coeffs}$ to $\text{basis\_coeffs\_matrix}$
        \EndIf
    \EndFor
\Else \Comment{Large component: heuristic selection}
    \State \textbf{Target Size Heuristic}
    \State $\text{target\_sizes} \leftarrow [\lfloor N/3 \rfloor, \lfloor 4N/9 \rfloor, \lfloor 2N/3 \rfloor]$
    \State Initialize: $\text{selected\_basis} \leftarrow [\,]$, $\text{used\_indices} \leftarrow \{\}$
    
    \For{target size $t \in \textsc{Sort}(\text{target\_sizes})$}
        \State $\text{best\_match} \leftarrow \text{null}$, $\text{min\_distance} \leftarrow \infty$
        
        \For{$i = 0, 1, \ldots, |\text{all\_combinations}|-1$}
            \If{$i \in \text{used\_indices}$} \textbf{continue} \EndIf
            \State $c \leftarrow \text{all\_combinations}[i]$
            \State $\text{distance} \leftarrow |c.\text{num\_terms} - t|$
            
            \If{$\text{distance} < \text{min\_distance}$}
                \State $\text{min\_distance} \leftarrow \text{distance}$, $\text{best\_match} \leftarrow i$
            \ElsIf{$\text{distance} = \text{min\_distance}$ and $c.\text{complexity} <$ current best complexity}
                \State $\text{best\_match} \leftarrow i$ \Comment{Prefer simpler combinations}
            \EndIf
        \EndFor
        
        \If{$\text{best\_match} \neq \text{null}$}
            \State Add $\text{all\_combinations}[\text{best\_match}]$ to $\text{selected\_basis}$
            \State Add $\text{best\_match}$ to $\text{used\_indices}$
        \EndIf
    \EndFor
    
    \State $\text{final\_basis} \leftarrow \textsc{LinearlyIndependentSubset}(\text{selected\_basis})$
    \State Fill remaining slots from unused combinations if needed
\EndIf

\State Sort $\text{final\_basis}$ by $\text{num\_terms}$ (ascending)
\State \Return $[(\text{c.coeffs}, \text{c.num\_terms}, \text{c.vector}) : c \in \text{final\_basis}]$
\end{algorithmic}
\vspace{0.3cm}

\noindent\rule{\textwidth}{0.4pt}
\begin{center}
\textbf{Algorithm 6A: Weight-wise $G_k$-Invariant Analysis with Detailed Solution Display}
\end{center}
\noindent\rule{\textwidth}{0.4pt}
\vspace{0.3cm}
\begin{algorithmic}[1]
\Require $\Sigma_k$-invariants by weight $\{\mathcal{S}_\omega\}$, parameters $k$, global basis $\mathcal{B}_d$, decomposition map $\phi$
\Ensure Weight-wise $G_k$-invariant analysis with constraint equations and solution details

\State Initialize: $\text{glk\_invariants\_by\_weight} \leftarrow \{\}$
\State Create global counter: $\text{global\_sinv\_counter} \leftarrow 0$

\For{weight vector $\omega$ with $\mathcal{S}_\omega \neq \emptyset$}
    \State $\sigma_\text{invariants\_in\_w} \leftarrow \mathcal{S}_\omega$
    \State $n \leftarrow |\sigma_\text{invariants\_in\_w}|$
    
    \State \textbf{Create Symbol Map for Current Weight}
    \State $\text{w\_sigma\_symbols} \leftarrow \{\}$
    \For{$j = 0, 1, \ldots, n-1$}
        \State $\text{global\_sinv\_counter} \leftarrow \text{global\_sinv\_counter} + 1$
        \State $p \leftarrow \sigma_\text{invariants\_in\_w}[j]$
        \State $\text{w\_sigma\_symbols}[\textsc{Str}(p)] \leftarrow \text{S\_inv\_} + \text{global\_sinv\_counter}$
    \EndFor
    
    \State \textbf{Local $G_k$-Constraint Analysis}
    \State Compute local basis: $\text{basis\_in\_w} \leftarrow \{m \in \mathcal{B}_d : \textsc{GetWeightVector}(m, k) = \omega\}$
    \State $\text{local\_dim} \leftarrow |\text{basis\_in\_w}|$
    \If{$\text{local\_dim} = 0$} \textbf{continue} \EndIf
    
    \State Create local index map: $\text{mono\_to\_local\_idx}: \text{basis\_in\_w} \to \{0, 1, \ldots, \text{local\_dim}-1\}$
    
    \State \textbf{Build Local $(\rho_k + I)$ Constraint Matrix}
    \State Initialize: $A \in \mathbb{F}_2^{\text{local\_dim} \times n}$ (sparse)
    
    \For{$j = 0, 1, \ldots, n-1$}
        \State $s_j \leftarrow \sigma_\text{invariants\_in\_w}[j]$
        \State $\text{error\_poly} \leftarrow \textsc{ApplyRho}(s_j, k, k) + s_j$ \Comment{$(\rho_k + I)s_j$}
        \State $\text{error\_vec\_global} \leftarrow \textsc{DecomposeGloballyFast}(\text{error\_poly}, \phi, \mathcal{P})$
        
        \If{$\text{error\_vec\_global} \neq \text{null}$}
            \For{$i = 0, 1, \ldots, |\mathcal{B}_d|-1$}
                \If{$\text{error\_vec\_global}[i] = 1$}
                    \State $\text{mono\_global} \leftarrow \mathcal{B}_d[i]$
                    \If{$\text{mono\_global} \in \text{mono\_to\_local\_idx}$}
                        \State $\text{local\_idx} \leftarrow \text{mono\_to\_local\_idx}[\text{mono\_global}]$
                        \State $A[\text{local\_idx}, j] \leftarrow 1$
                    \EndIf
                \EndIf
            \EndFor
        \EndIf
    \EndFor
    
    \State \textbf{Solve and Display Detailed Analysis}
    \State $\text{kernel\_vectors} \leftarrow \textsc{RightKernel}(A).\textsc{Basis}()$
    \State Display: "Dimension of weight-wise $[QP_k(\omega)]^{G_k}$: $|\text{kernel\_vectors}|$"
    
    \State \textbf{Extract and Display Constraint System}
    \State $\text{simplified\_equations} \leftarrow [\,]$
    \For{row $r \in A.\textsc{Rows}()$}
        \If{$r \neq \mathbf{0}$}
            \State $\text{eq\_terms} \leftarrow [\text{w\_sigma\_symbols}[\textsc{Str}(\sigma_\text{invariants\_in\_w}[j])] : r[j] = 1]$
            \If{$\text{eq\_terms} \neq [\,]$}
                \State Add "$\sum \text{eq\_terms} = 0$" to $\text{simplified\_equations}$
            \EndIf
        \EndIf
    \EndFor
    
    \State Display constraint equations or "(No non-trivial equations)" if empty
    
    \State \textbf{Construct Local $G_k$-Invariants}
    \State $\text{w\_glk\_invariants} \leftarrow [\,]$
    \If{$\text{kernel\_vectors} \neq [\,]$}
        \For{$i = 0, 1, \ldots, |\text{kernel\_vectors}|-1$}
            \State $\text{sol\_vec} \leftarrow \text{kernel\_vectors}[i]$
            \State $\text{invariant} \leftarrow \sum_{j: \text{sol\_vec}[j] = 1} \sigma_\text{invariants\_in\_w}[j]$
            
            \If{$\text{invariant} \neq 0$}
                \State Append $\text{invariant}$ to $\text{w\_glk\_invariants}$
                \State Display solution analysis with combination details
            \EndIf
        \EndFor
    \EndIf
    
    \If{$\text{w\_glk\_invariants} \neq [\,]$}
        \State $\text{glk\_invariants\_by\_weight}[\omega] \leftarrow \text{w\_glk\_invariants}$
    \EndIf
\EndFor

\State \Return $\text{glk\_invariants\_by\_weight}$
\end{algorithmic}
\vspace{0.3cm}

\noindent\rule{\textwidth}{0.4pt}
\begin{center}
\textbf{Algorithm 6B: Corrected Global $G_k$-Invariant Construction with 3-Case Logic}
\end{center}
\noindent\rule{\textwidth}{0.4pt}
\vspace{0.3cm}
\begin{algorithmic}[1]
\Require $G_k$-candidates by weight $\{\mathcal{G}_\omega\}$, $\Sigma_k$-invariants by weight $\{\mathcal{S}_\omega\}$, parameters $k$, global basis $\mathcal{B}_d$, decomposition map $\phi$
\Ensure True global $G_k$-invariants, correction polynomial $h'$, primary candidate $h$, constraint equations

\State \textbf{Step 1: Corrected 3-Case Structure Analysis}
\State $\text{glk\_weights} \leftarrow \{\omega : \mathcal{G}_\omega \neq \emptyset\}$
\State $\text{sigma\_weights} \leftarrow \{\omega : \mathcal{S}_\omega \neq \emptyset\}$

\If{$\text{glk\_weights} = \emptyset$}
    \State $\text{case\_type} \leftarrow \text{CASE\_3}$ \Comment{All $G_k$ weight spaces trivial}
    \State \Return $([\,], \text{null}, \text{null}, [\,])$
\EndIf

\State $\omega_{\min} \leftarrow \min(\text{sigma\_weights})$ \Comment{Minimal $\Sigma_k$ weight}

\If{$\omega_{\min} \in \text{glk\_weights}$ and $\{\omega \in \text{glk\_weights} : \omega > \omega_{\min}\} = \emptyset$}
    \State $\text{case\_type} \leftarrow \text{CASE\_1}$ \Comment{Only minimal weight has $G_k$-invariants}
    \State $\text{main\_weight} \leftarrow \omega_{\min}$
\Else
    \State $\text{case\_type} \leftarrow \text{CASE\_2}$ \Comment{Multiple or non-minimal weights have $G_k$-invariants}
    \State $\text{main\_weight} \leftarrow \max(\text{glk\_weights})$ \Comment{Choose largest $G_k$ weight}
\EndIf

\State Display: "Case Type: \text{case\_type}, Main Weight: \text{main\_weight}"

\State \textbf{Step 2: Case-Specific Global $\Sigma_k$-Basis Construction}

\If{$\text{case\_type} = \text{CASE\_1}$}
    \State \textbf{Case 1: Direct Collection Method}
    \State $\text{guaranteed\_global\_sigma} \leftarrow \mathcal{S}_{\omega_{\min}}$ \Comment{Auto-accept minimal weight}
    \State $\text{verified\_global\_sigma} \leftarrow [\,]$
    
    \For{weight $\omega > \omega_{\min}$ with $\mathcal{S}_\omega \neq \emptyset$}
        \For{candidate $c \in \mathcal{S}_\omega$}
            \If{$\textsc{VerifyGlobalSigmaInvariant}(c, k, \mathcal{B}_d, \phi)$}  \Comment{Checks if $\rho_j$ action ``\textit{leaks}'' to other weight spaces}
                \State Append $c$ to $\text{verified\_global\_sigma}$
            \EndIf
        \EndFor
    \EndFor
    
    \State $\text{global\_sigma\_basis} \leftarrow \text{guaranteed\_global\_sigma} \cup \text{verified\_global\_sigma}$
    \State $h' \leftarrow \text{null}$, $h \leftarrow \text{null}$ \Comment{No correction needed}
\EndIf

\If{$\text{case\_type} = \text{CASE\_2}$}
    \State \textbf{Case 2: Correction Method with $h + h'$}
    \State $h \leftarrow \mathcal{G}_{\text{main\_weight}}[0]$ \Comment{Primary candidate from largest $G_k$ weight}
    \State Display: "Primary candidate $h$ from weight \text{main\_weight}"
    
    \State \textbf{Identify Correction Space (weights $< \text{main\_weight}$)}
    \State $\text{correction\_weights} \leftarrow \{\omega < \text{main\_weight} : \mathcal{S}_\omega \neq \emptyset\}$
    \State $\text{correction\_monomials} \leftarrow \{m \in \mathcal{B}_d : \textsc{GetWeightVector}(m, k) \in \text{correction\_weights}\}$
    
    \State \textbf{Build Verified Homogeneous Solution Space}
    \State $\text{guaranteed\_global\_sigma} \leftarrow \mathcal{S}_{\omega_{\min}}$ \Comment{Auto-accept minimal weight}
    \State $\text{h\_double\_primes} \leftarrow \text{guaranteed\_global\_sigma}$ \Comment{Start with guaranteed}
    
    \State Create symbol map: $\text{s\_inv\_map} \leftarrow \{\textsc{Str}(p) : \text{S\_inv\_i} \text{ for } p \text{ in smaller weights}\}$
    
    \For{weight $\omega \in \text{correction\_weights} \setminus \{\omega_{\min}\}$}
        \For{candidate $c \in \mathcal{S}_\omega$}
            \State $\text{s\_inv\_name} \leftarrow \text{s\_inv\_map}[\textsc{Str}(c)]$
            \If{$\textsc{VerifyGlobalSigmaInvariant}(c, k, \mathcal{B}_d, \phi, \text{verbose}=\text{False})$}
                \State Display: "[$\checkmark$] \text{s\_inv\_name} is global"
                \State Append $c$ to $\text{h\_double\_primes}$
            \Else
                \State Display: "[$\times$] \text{s\_inv\_name} is NOT global"
            \EndIf
        \EndFor
    \EndFor
    
    \State \textbf{Compute Particular Solution $h'$}
    \State $h' \leftarrow \textsc{FindParticularSolutionSigmaK}(h, \text{correction\_monomials}, k, \mathcal{B}_d, \phi)$
    \State $h\_temp \leftarrow h + h'$
    \State Verify: $\textsc{VerifyGlobalSigmaInvariant}(h\_temp, k, \mathcal{B}_d, \phi)$
    
    \State Display detailed global $\Sigma_k$-basis structure with $\beta$ coefficients
    \State $\text{global\_sigma\_basis} \leftarrow [h\_temp] \cup \text{h\_double\_primes}$
\EndIf

\State \textbf{Step 3: Apply Final $(\rho_k + I)$ Condition}
\State $N \leftarrow |\text{global\_sigma\_basis}|$
\State Initialize: $A \in \mathbb{F}_2^{|\mathcal{B}_d| \times N}$ (sparse)

\For{$j = 0, 1, \ldots, N-1$}
    \State $p \leftarrow \text{global\_sigma\_basis}[j]$
    \State $\text{final\_error\_poly} \leftarrow \textsc{ApplyRho}(p, k, k) + p$ \Comment{$(\rho_k + I)p$}
    \State $\text{final\_error\_vec} \leftarrow \textsc{DecomposeGloballyFast}(\text{final\_error\_poly}, \phi, \mathcal{P})$
    \If{$\text{final\_error\_vec} \neq \text{null}$}
        \State Set column $j$ of $A$ to $\text{final\_error\_vec}$
    \EndIf
\EndFor

\State \textbf{Step 4: Extract and Display Constraint Analysis}
\State $\text{constraint\_equations} \leftarrow [\,]$
\For{$i = 0, 1, \ldots, A.\textsc{NRows}()-1$}
    \State $\text{row} \leftarrow A.\textsc{Row}(i)$
    \If{$\text{row} \neq \mathbf{0}$}
        \State $\text{nonzero\_indices} \leftarrow \{j : \text{row}[j] = 1\}$
        \If{$\text{nonzero\_indices} \neq \emptyset$}
            \State $\text{eq\_terms} \leftarrow [\beta_j : j \in \textsc{Sort}(\text{nonzero\_indices})]$
            \State $\text{equation} \leftarrow "\sum \text{eq\_terms} = 0"$
            \State Add $\text{equation}$ to $\text{constraint\_equations}$ if not duplicate
        \EndIf
    \EndIf
\EndFor

\State Display: "Constraint system for global $G_k$-invariants:"
\If{$\text{constraint\_equations} \neq [\,]$}
    \For{equation $eq \in \text{constraint\_equations}$}
        \State Display: "$eq$"
    \EndFor
\Else
    \State Display: "No non-trivial constraint equations"
\EndIf

\State \textbf{Step 5: Solve for Global $G_k$-Invariants}
\State $\text{final\_kernel\_basis} \leftarrow \textsc{RightKernel}(A).\textsc{Basis}()$
\State Display: "Solution space dimension: $|\text{final\_kernel\_basis}|$"

\State $\text{glk\_invariants} \leftarrow [\,]$
\If{$\text{final\_kernel\_basis} \neq [\,]$}
    \State Display: "Detailed solution analysis:"
    \For{$i = 0, 1, \ldots, |\text{final\_kernel\_basis}|-1$}
        \State $\text{solution\_vector} \leftarrow \text{final\_kernel\_basis}[i]$
        \State Display solution vector and coefficient interpretation
        
        \State $\text{invariant} \leftarrow \sum_{j: \text{solution\_vector}[j] = 1} \text{global\_sigma\_basis}[j]$
        \If{$\text{invariant} \neq 0$}
            \State Append $\text{invariant}$ to $\text{glk\_invariants}$
            \State Display linear combination and resulting polynomial
        \EndIf
    \EndFor
\Else
    \State Display: "Only trivial solution exists"
\EndIf

\State \textbf{Step 6: Final Results Display}
\State Display: "Dimension of $(QP_k)_d^{G_k}$: $|\text{glk\_invariants}|$"
\If{$\text{glk\_invariants} \neq [\,]$}
    \State Display: "Basis for $(QP_k)_d^{G_k}$:"
    \For{$i = 0, 1, \ldots, |\text{glk\_invariants}|-1$}
        \State $\text{terms} \leftarrow \textsc{Sort}([\textsc{Str}(m) : m \in \text{glk\_invariants}[i].\textsc{Monomials}()])$
        \State Display: "$G_k$-Inv$_{i+1}$ = [\text{terms}]"
    \EndFor
\Else
    \State Display: "The $G_k$-invariant space is trivial"
\EndIf

\State \Return $(\text{glk\_invariants}, h', h, \text{constraint\_equations})$
\end{algorithmic}

\begin{note}
Each phase translates the abstract algorithmic steps into concrete computational procedures, with detailed attention to implementation efficiency, parallel processing, and error handling.

\subsection*{Clarification on the algorithmic structure: From components to a filtration}
A crucial point that motivates this multi-phase structure is the nature of the decomposition being handled. Within a fixed weight space $\QPKomega$, our algorithm (in Phase II) partitions the basis into $\SigmaK$-connected components. The action of the symmetric group $\SigmaK$ preserves weight vectors, and our procedure correctly identifies the subspaces spanned by these components, which are indeed $\SigmaK$-submodules. Within this context, the space $\QPKomega$ decomposes as a direct sum of these component-wise submodules. This justifies the local computation of $\SigmaK$-invariants for each component independently.

However, the problem becomes significantly more complex when passing from $\SigmaK$ to the full general linear group $\GLK$. The action of the generator $\rho_k$ (the transvection) does not preserve weight vectors. This action can cause ``leakage'' from a higher weight space $\QPKomega$ to a lower one, $\bigoplus_{\omega' < \omega} Q\mathcal{P}_k(\omega')$.

Consequently, the decomposition of $(Q\mathcal{P}_k)_d$ is no longer a direct sum of $\GLK$-modules but rather a \textit{filtration}. This means that a global approach is necessary to correctly calculate the $\GLK$-invariants. The global construction in Phase IV, particularly the ``Correction Method'' (Algorithm 6B), is explicitly designed to handle this filtration structure by systematically resolving the dependencies and leakage between the different weight strata. This multi-phase procedure is therefore essential for correctly navigating the underlying algebraic complexities of the problem.

\end{note}


\section*{Phase I: Global Admissible Basis Construction}\label{s5}

\begin{stepbox}{Step 1.1: Enhanced Monomial Generation and Ordering}
Generate all monomials of degree $d$ in variables $x_1, \ldots, x_k$ with improved alpha function and weight vector computation.
\end{stepbox}

\textbf{Implementation Details:}
\begin{enumerate}[label=(\alph*)]
\item \textbf{Monomial Enumeration:} Use Sage's \texttt{P.monomials\_of\_degree(d)} to generate all degree-$d$ monomials.
\item \textbf{Alpha Function Enhancement:} Implement memoized \texttt{alpha(n)} for bit counting with \texttt{functools.lru\_cache(10000)}
\item \textbf{Weight Vector Computation:} For monomial $x_1^{a_1} \cdots x_k^{a_k}$, compute weight vector $\omega = (\omega_1, \omega_2, \ldots)$ where:
$$\omega_j = \sum_{i=1}^k \alpha_{j-1}(a_i), \quad \alpha_s(n) = \text{the } s\text{-th bit in binary expansion of } n.$$
\item \textbf{Enhanced Comparison:} Improved \texttt{CompareMonomials} with exception handling and fallback:
\begin{algorithmic}
\Function{CompareMonomials}{$m_1, m_2, k$}
    \State $w_1, w_2 \leftarrow \text{GetWeightVector}(m_1, k), \text{GetWeightVector}(m_2, k)$
    \If{$w_1 < w_2$} \Return $-1$ \ElsIf{$w_1 > w_2$} \Return $1$ \EndIf
    \State $s_1, s_2 \leftarrow \text{GetExponentVector}(m_1, k), \text{GetExponentVector}(m_2, k)$
    \If{$s_1 < s_2$} \Return $-1$ \ElsIf{$s_1 > s_2$} \Return $1$ \EndIf
    \State \Return $0$
\EndFunction
\end{algorithmic}
\end{enumerate}

\begin{stepbox}{Step 1.2: Advanced Parallel Hit Matrix Construction}
Build the hit matrix $M_{\text{hit}}$ using enhanced worker pools with optimized task distribution and real-time monitoring.
\end{stepbox}

\textbf{Enhanced Parallel Strategy:}
\begin{enumerate}[label=(\alph*)]
\item \textbf{Task Generation:} Create task list $\mathcal{T} = \{(2^i, g) : i \geq 0, \deg(g) = d - 2^i\}$
\item \textbf{Worker Pool Enhancement:} Use \texttt{InitWorkerHitMatrix} with global variables and memoized Steenrod squares
\item \textbf{Dynamic Task Distribution:} Adaptive chunk size: $\max(1, |\mathcal{T}|/(4 \times \text{cpu\_count}))$ with load balancing
\item \textbf{Steenrod Square Computation:} Enhanced \texttt{GetSqFunction} with memoization:
\begin{algorithmic}
\Function{TaskBuildHitMatrixColumn}{$(k_{op}, g)$}
    \State $h' \leftarrow \text{GetSqFunction}(k_{op}, g)$ using memoized computation
    \If{$h' \neq 0$}
        \State Decompose $h'$ into monomial basis and return column indices
    \EndIf
    \State \Return $\text{null}$
\EndFunction
\end{algorithmic}
\item \textbf{Enhanced Progress Monitoring:} Real-time updates every 1000 tasks with percentage and timing estimates
\end{enumerate}

\begin{outputbox}
\textbf{Output after Step 1.2:}
\begin{itemize}
\item Sparse matrix $M_{\text{hit}} \in \mathbb{F}_2^{n \times |\mathcal{T}|}$ with $\approx 80\%$ parallel speedup
\item Approximately $|\mathcal{T}| \approx d \cdot \binom{k+d-2^{\lfloor \log_2 d \rfloor}-1}{k-1}$ non-zero columns
\item Memory optimization with garbage collection and usage reporting
\end{itemize}
\end{outputbox}

\begin{stepbox}{Step 1.3: Enhanced Echelon Form and Basis Extraction}
Compute reduced row echelon form with improved linear algebra and error handling.
\end{stepbox}

\textbf{Enhanced Linear Algebra:}
\begin{enumerate}[label=(\alph*)]
\item \textbf{Augmented System:} Form $E = \text{EchelonForm}(M_{\text{hit}} \mid I_n)$ with sparse optimization
\item \textbf{Pivot Analysis:} Identify pivot positions $\mathbb{P} = \{p_1, p_2, \ldots, p_r\}$ with validation
\item \textbf{Admissible Basis Extraction:} 
\[
B_d = \left\{ \text{ordered\_monos}[p - |M_{\mathrm{hit}}|] \;\middle|\; p \in \mathbb P \text{ and } p \geq |M_{\mathrm{hit}}| \right\}
\]
\end{enumerate}

\begin{stepbox}{Step 1.4: Dual-Pool Decomposition Map Construction}
Build enhanced decomposition map using separate worker pools with serialized data distribution.
\end{stepbox}

\textbf{Enhanced Dual-Pool Implementation:}
\begin{enumerate}[label=(\alph*)]
\item \textbf{Matrix Serialization:} Optimize data transfer with \texttt{pickle.HIGHEST\_PROTOCOL} and sparse representation
\item \textbf{Enhanced Task Processing:} Improved \texttt{TaskDecompose} with comprehensive error handling:
\begin{algorithmic}
\Function{TaskDecompose}{monomial\_index}
    \State $\text{target\_vector} \leftarrow \text{unit vector at position monomial\_index}$
    \State $\text{decomp\_vec} \leftarrow \text{solver\_matrix.solve\_right(target\_vector)}$
    \State \Return projection to admissible basis coordinates with error recovery
\EndFunction
\end{algorithmic}
\item \textbf{Success Tracking:} Monitor decomposition statistics with success/failure counts and retry mechanisms
\end{enumerate}

\begin{outputbox}
\textbf{Phase I Enhanced Output:}
\begin{itemize}
\item Admissible basis $\mathcal{B}_d$ with $|\mathcal{B}_d| = \dim (Q\mathcal P_k)_d$ and parallel efficiency
\item Decomposition map $\phi: (\mathcal{P}_k)_d \to \mathbb{F}_2^{|\mathcal{B}_d|}$ with error recovery
\item Enhanced caching: \texttt{cache\_full\_reducer\_k}\{k\}\_d\{d\}.pkl with compression
\end{itemize}
\end{outputbox}

\section*{Phase II: Enhanced Weight-Space Decomposition and Component Analysis}

\begin{stepbox}{Step 2.1: Advanced Weight Stratification with Symbol Management}
Partition the admissible basis by weight vectors with comprehensive symbol mapping and dual classification.
\end{stepbox}

\textbf{Enhanced Stratification Process:}
\begin{enumerate}[label=(\alph*)]
\item \textbf{Weight Computation:} For each $m \in \mathcal{B}_d$, compute $\omega(m)$ with memoization
\item \textbf{Dual Classification:} Separate $(Q\mathcal P_k^0)_d$ (with zero exponents) and $(Q\mathcal P_k^+)_d$ (all positive)
\item \textbf{Global Symbol Mapping:} Create systematic naming: $\text{mono\_symbol\_map}: m \mapsto a_{d,i}^0$ or $a_{d,i}^+$
\item \textbf{Enhanced Grouping:} Create partition $\mathcal{B}_d = \bigsqcup_{\omega} \mathcal{B}_d(\omega)$ with efficient lookup
\item \textbf{Professional Display:} Two-column layout with proper alignment and balanced distribution
\end{enumerate}

\begin{stepbox}{Step 2.2: Decomposition-Based $\Sigma_k$-Component Detection}
Use precise global decomposition for adjacency detection instead of heuristic methods.
\end{stepbox}

\textbf{Enhanced Graph-Based Algorithm:}
\begin{enumerate}[label=(\alph*)]
\item \textbf{Precise Adjacency Matrix:} For subspace basis $\{m_1, \ldots, m_N\} = \mathcal{B}_d(\omega)$:
\begin{algorithmic}
\For{$i = 1$ to $N$}
    \State $A[i,i] \leftarrow 1$ \Comment{Self-loops}
    \For{$j = 1$ to $k-1$} \Comment{$\Sigma_k$ generators}
        \State $\text{transformed} \leftarrow \rho_j(m_i)$
        \State $\text{coords} \leftarrow \text{DecomposeGloballyFast}(\text{transformed}, \text{decomp\_map})$
        \For{each $m_\ell \in \mathcal{B}_d(\omega)$ with $\text{coords}[m_\ell] = 1$}
            \State $A[i,\ell] \leftarrow 1$, $A[\ell,i] \leftarrow 1$
        \EndFor
    \EndFor
\EndFor
\end{algorithmic}

\item \textbf{Connected Component Analysis:} Use breadth-first search with component quality metrics
\item \textbf{Component Extraction:} Each connected component corresponds to one $\Sigma_k$-component with validation
\end{enumerate}

\begin{stepbox}{Step 2.3: Enhanced $\Sigma_k$-Invariant Computation with Corrected Constraints}
Solve linear systems using corrected $(T_j + I)$ constraints with detailed proof generation.
\end{stepbox}

\textbf{Corrected Linear System Construction:}
For Component $\mathcal{O} = \{o_1, \ldots, o_N\}$, build enhanced constraint matrices:
\begin{enumerate}[label=(\alph*)]
\item \textbf{Fixed Generator Constraints:} For each $j \in \{1, \ldots, k-1\}$:
\begin{algorithmic}
\State Let $T_j$ be an $N \times N$ zero matrix
\For{$i = 1$ to $N$} \Comment{For each column of $T_j$}
    \State $\text{transformed} \leftarrow \rho_j(o_i)$
    \State $\text{global\_coords} \leftarrow \text{DecomposeGlobally}(\text{transformed})$
    \For{each component element $o_\ell$ where $\text{global\_coords}[o_\ell] = 1$}
        \State $T_j[\ell, i] \leftarrow 1$
    \EndFor
\EndFor
\end{algorithmic}

\item \textbf{Corrected System Assembly:} Create block matrix $A_\sigma = [T_1 + I; T_2 + I; \ldots; T_{k-1} + I]$ (FIXED)
\item \textbf{Enhanced Kernel Computation:} Solve $A_\sigma \vec{c} = \vec{0}$ with linear independence verification
\end{enumerate}

\begin{stepbox}{Step 2.4: Enhanced Meaningful Linear Combination Generation}
Generate optimized $\Sigma_k$-invariants with adaptive strategies and Roman numeral labeling.
\end{stepbox}

\textbf{Enhanced Optimization Strategy:}
\begin{enumerate}[label=(\alph*)]
\item \textbf{Component Labeling:} Use \texttt{ToRoman} for professional numbering: (I), (II), (III), ...
\item \textbf{Small components ($N < 30$):} Exact optimization by complexity and term count
\item \textbf{Large components ($N \geq 30$):} Heuristic selection targeting diverse sizes:
$$\text{target\_sizes} = \left\{\left\lfloor \frac{N}{3} \right\rfloor, \left\lfloor \frac{4N}{9} \right\rfloor, \left\lfloor \frac{2N}{3} \right\rfloor\right\}$$
\item \textbf{Enhanced Independence Verification:} Ensure final basis has full rank with comprehensive validation
\end{enumerate}

\begin{outputbox}
\textbf{Phase II Enhanced Output:}
\begin{itemize}
\item Weight-organized $\Sigma_k$-invariants: $\{\text{S\_inv}_i\}_{i=1}^{M}$ with systematic naming
\item Enhanced component structure: $\{\mathcal{O}_j\}$ with Roman numeral labeling and detailed certificates
\item Optimized linear combinations with complexity metrics and independence verification
\end{itemize}
\end{outputbox}

\section*{Phase III: Enhanced Weight-wise $G_k$-Invariant Exploration}

\begin{stepbox}{Step 3.1: Advanced Local $G_k$-Constraint Application}
Apply the additional $\rho_k$ constraint to $\Sigma_k$-invariants with comprehensive analysis.
\end{stepbox}

\textbf{Enhanced Local Implementation:}
For weight space $\omega$ with $\Sigma_k$-invariants $\{s_1, \ldots, s_n\}$:
\begin{enumerate}[label=(\alph*)]
\item \textbf{Enhanced Symbol Management:} Global counter for \texttt{S\_inv\_i} naming across all weights
\item \textbf{Local Basis Extraction:} $\text{basis\_in\_w} = \{m \in \mathcal{B}_d : \text{weight}(m) = \omega\}$
\item \textbf{Enhanced Constraint Matrix:}
\begin{algorithmic}
\For{$j = 1$ to $n$}
    \State $\text{error\_poly} \leftarrow \rho_k(s_j) + s_j$ \Comment{$(\rho_k + I)s_j$}
    \State $\text{error\_global} \leftarrow \text{DecomposeGloballyFast}(\text{error\_poly}, \text{decomp\_map})$
    \For{each local basis element $m_i \in \text{basis\_in\_w}$}
        \If{$\text{error\_global}[m_i] = 1$}
            \State $A[\text{local\_idx}(m_i), j] \leftarrow 1$
        \EndIf
    \EndFor
\EndFor
\end{algorithmic}
\item \textbf{Enhanced Solution Analysis:} Extract constraint equations and detailed solution interpretation
\end{enumerate}

\begin{stepbox}{Step 3.2: Enhanced Case Detection Analysis}
Enhanced case detection with mathematically rigorous 3-case logic and comprehensive debug information.
\end{stepbox}

\textbf{Enhanced Case Detection Logic:}
\begin{algorithmic}
\Function{AnalyzeGlkCaseStructureCorrected}{$\mathcal{G}_\text{by\_weight}$, $\mathcal{S}_\text{by\_weight}$}
    \State $\text{glk\_weights} \leftarrow [\omega : \omega, \text{invs} \in \mathcal{G}_\text{by\_weight}, \text{invs} \neq [\,]]$
    \State $\omega_{\min} \leftarrow \min\{\omega : \mathcal{S}_\omega \neq \emptyset\}$
    
    \If{$\text{glk\_weights} = [\,]$}
        \State \Return $(\text{CASE\_3}, \text{null})$ \Comment{Trivial space}
    \ElsIf{$\omega_{\min} \in \text{glk\_weights}$ and no larger weights have $G_k$-invariants}
        \State \Return $(\text{CASE\_1}, \omega_{\min})$ \Comment{Direct method}
    \Else
        \State \Return $(\text{CASE\_2}, \max(\text{glk\_weights}))$ \Comment{Correction method}
    \EndIf
\EndFunction
\end{algorithmic}

\textbf{Enhanced Features:}
\begin{enumerate}[label=(\alph*)]
\item \textbf{Debug Information:} Print weight distributions and theoretical justification
\item \textbf{Statistical Analysis:} Compute efficiency ratios and coverage metrics
\end{enumerate}

\begin{outputbox}
\textbf{Phase III Enhanced Output:}
\begin{itemize}
\item Weight-wise $G_k$-invariant candidates: $\{\mathcal{G}_\omega\}$ with detailed analysis
\item Corrected case classification: CASE\_1, CASE\_2, or CASE\_3 with debug information
\item Enhanced constraint analysis with equation display and solution interpretation
\end{itemize}
\end{outputbox}

\section*{Phase IV: Enhanced Global $G_k$-Invariant Construction}

\begin{stepbox}{Step 4.1: Enhanced Case-Specific Global Construction}
Apply the appropriate corrected method with comprehensive verification framework.
\end{stepbox}

\subsection{Enhanced Case 1: Direct Global $\Sigma_k$-Invariant Collection}

\textbf{Enhanced Implementation for Case 1:}
\begin{enumerate}[label=(\alph*)]
\item \textbf{Auto-Accept Strategy:} All $\Sigma_k$-invariants from $\omega_{\min}$ are guaranteed global
\item \textbf{Enhanced Higher Weight Verification:} For each $\sigma \in \mathcal{S}_\omega, \omega > \omega_{\min}$:
\begin{algorithmic}
\Function{VerifyGlobalSigmaInvariant}{$\sigma$, $k$, $\text{decomp\_map}$, $\text{verbose=True}$}
    \For{$i = 1$ to $k-1$}
        \State $\text{error} \leftarrow \rho_i(\sigma) + \sigma$
        \State $\text{error\_decomp} \leftarrow \text{DecomposeGloballyFast}(\text{error}, \text{decomp\_map})$
        \If{$\text{error\_decomp} \neq \vec{0}$}
            \If{$\text{verbose}$} \State Print detailed leakage analysis \EndIf
            \State \Return $\texttt{false}$ \Comment{Leakage to other weights}
        \EndIf
    \EndFor
    \State \Return $\texttt{true}$
\EndFunction
\end{algorithmic}
\item \textbf{Enhanced Global Assembly:} $\text{global\_sigma\_basis} = \text{guaranteed} \cup \text{verified}$ with validation
\end{enumerate}

\subsection{Enhanced Case 2: Advanced Correction Method with $h + h'$ Construction}

\textbf{Enhanced Implementation for Case 2:}
\begin{enumerate}[label=(\alph*)]
\item \textbf{Primary Candidate Selection:} $h \leftarrow \mathcal{G}_{\text{main\_weight}}[0]$ with violation analysis
\item \textbf{Enhanced Correction Space:}
$$\text{correction\_space} = \{m \in \mathcal{B}_d : \text{weight}(m) < \text{main\_weight}\}$$
It is important to clarify that this space comprises all elements of the \textit{global admissible basis} whose weights are less than that of the main weight. This space is not restricted to only those polynomials found in the kernel.
\item \textbf{Enhanced Particular Solution:} Use \texttt{FindParticularSolutionSigmaK} with solvability checks:
\begin{algorithmic}
\Function{FindParticularSolutionSigmaK}{$h$, $\text{correction\_basis}$, $k$, $\text{decomp\_map}$}
    \State Build enhanced system $A \vec{c} = \vec{b}$ with comprehensive error handling
    \State $\vec{b} = $ concatenated error vectors from $(\rho_i +  I)h$, $i = 1, \ldots, k-1$
    \State $A = $ concatenated effect matrices from $(\rho_i +  I)$ on correction basis
    \State Perform solvability check and solve with verification
    \State \Return $h' = \sum_j c_j \cdot \text{correction\_basis}_j$ with validation
\EndFunction
\end{algorithmic}
\item \textbf{Enhanced Verification:} Confirm $(h + h', h'', h''', \ldots)$ forms global $\Sigma_k$-basis
\end{enumerate}

\begin{stepbox}{Step 4.2: Enhanced Final $\rho_k$ Constraint Application}
Apply the ultimate $G_k$ constraint with comprehensive solution analysis and display.
\end{stepbox}

\textbf{Enhanced Final System Construction:}
\begin{enumerate}[label=(\alph*)]
\item \textbf{Enhanced Global Matrix:} For global $\Sigma_k$-basis $\{p_1, \ldots, p_M\}$:
$$A_{\text{final}}[:, j] = \phi((\rho_k + I)p_j), \quad j = 1, \ldots, M$$

\item \textbf{Enhanced Constraint Analysis:} Extract and display $\beta$-coefficient equations with detailed formatting
\item \textbf{Enhanced Solution Interpretation:} Show coefficient assignments, linear combinations, and polynomial construction
\item \textbf{Final Kernel Computation:} $\text{final\_kernel} = \ker(A_{\text{final}})$ with comprehensive analysis
\item \textbf{Enhanced $G_k$-Invariant Assembly:} For each $\vec{v} \in \text{final\_kernel}$:
$$G_k\text{-invariant} = \sum_{j: v_j = 1} p_j$$
\end{enumerate}

\begin{outputbox}
\textbf{Phase IV Enhanced Output:}
\begin{itemize}
\item Enhanced global $\Sigma_k$-invariant basis: $\{p_1, \ldots, p_M\}$ with detailed construction
\item Comprehensive correction polynomial: $h'$ (for Case 2) with verification and error analysis
\item Enhanced constraint equations: $\{\beta \text{-equations}\}$ with mathematical presentation
\item Final $G_k$-invariant basis: $\{g_1, \ldots, g_D\}$ where $D = \dim(Q \mathcal P_k)_d^{G_k}$
\end{itemize}
\end{outputbox}

\section*{Phase V: Enhanced Verification and Certificate Generation}

\begin{stepbox}{Step 5.1: Comprehensive Invariance Verification}
Systematically verify computed invariants with detailed error reporting and mathematical validation.
\end{stepbox}

\textbf{Enhanced Verification Protocol:}
\begin{enumerate}[label=(\alph*)]
\item \textbf{Multi-level $\Sigma_k$-Invariance Check:} For each invariant $g$ and $j = 1, \ldots, k-1$:
$$\text{verify}: \phi((\rho_j + I)g) \equiv 0 \text{ with violation counting}$$

\item \textbf{Enhanced $\rho_k$-Invariance Check:} For each invariant $g$:
$$\text{verify}: \phi((\rho_k + I)g) \equiv 0 \text{ with detailed analysis}$$

\item \textbf{Enhanced Linear Independence:} Verify basis elements using vector space analysis with dimension checks
\item \textbf{Theoretical Consistency:} Cross-validate with known results and theoretical bounds when available
\end{enumerate}

\begin{stepbox}{Step 5.2: Enhanced Certificate Generation}
Generate comprehensive mathematical certificates with detailed construction documentation.
\end{stepbox}

\textbf{Enhanced Certificate Framework:}
\begin{enumerate}[label=(\alph*)]
\item \textbf{Executive Summary:} High-level computation overview with parameters, results, and performance metrics
\item \textbf{Enhanced Component Certificates:} For each component, provide:
\begin{itemize}
\item Basis elements with Roman numeral labeling and symbolic representation
\item Corrected constraint matrices $(T_j + I)$ for $j = 1, \ldots, k-1$
\item Kernel vectors and their interpretation with detailed analysis
\item Complexity metrics and optimization details
\end{itemize}

\item \textbf{Enhanced Global Construction Certificates:} Document:
\begin{itemize}
\item Case detection logic with mathematical justification
\item Global $\Sigma_k$-invariant basis construction with verification
\item Correction polynomial computation (Case 2) with detailed analysis
\item Final constraint system and solution process with comprehensive display
\end{itemize}

\item \textbf{Enhanced Mathematical Formulation:} Present results in standard notation with explicit basis representation
\end{enumerate}

\section*{Phase VI: Enhanced Output Formatting and Performance Analysis}

\begin{stepbox}{Step 6.1: Advanced Structured Output Generation}
Enhanced mathematical presentation with comprehensive formatting and professional layout management.
\end{stepbox}

\textbf{Enhanced Output Structure:}
\begin{enumerate}[label=(\alph*)]
\item \textbf{Enhanced Admissible Basis Display:} 
\begin{itemize}
\item $(Q\mathcal P_k^0)_d$: monomials with at least one zero exponent
\item $(Q\mathcal P_k^+)_d$: monomials with all positive exponents
\item Professional two-column layout with proper alignment
\item Enhanced symbolic mapping: $a_{d,i}^0, a_{d,i}^+ \leftrightarrow \text{monomial}_i$
\end{itemize}

\item \textbf{Enhanced $\Sigma_k$-Invariant Summary:} For each weight space $\omega$:
\begin{itemize}
\item Dimension of $[Q\mathcal P_k(\omega)]^{\Sigma_k}$ with detailed analysis
\item Explicit basis: $\{\text{S\_inv}_1, \text{S\_inv}_2, \ldots\}$ with systematic naming
\item Enhanced construction certificates with component details and Roman numeral labeling
\end{itemize}

\item \textbf{Enhanced $G_k$-Invariant Results:}
\begin{itemize}
\item Final dimension: $\dim (Q\mathcal P_k)_d^{G_k}$ with verification status
\item Explicit basis: $\{\text{GL\_inv}_1, \text{GL\_inv}_2, \ldots\}$ with wrapped polynomial display
\item Enhanced constraint equations and solution methodology with mathematical presentation
\end{itemize}
\end{enumerate}

\begin{stepbox}{Step 6.2: Comprehensive Performance Analysis and Monitoring}
Enhanced performance tracking with detailed timing, memory analysis, and optimization recommendations.
\end{stepbox}

\textbf{Enhanced Performance Framework:}
\begin{enumerate}[label=(\alph*)]
\item \textbf{Enhanced Global Basis Caching:} Store $(\mathcal{B}_d, \phi)$ in \texttt{cache\_full\_reducer\_k\{k\}\_d\{d\}.pkl} with compression
\item \textbf{Advanced Intermediate Results:} Cache component computations and $\Sigma_k$-invariant bases with hierarchical management
\item \textbf{Comprehensive Performance Metrics:} Track and report:
\begin{itemize}
\item Phase-specific timing breakdown with percentage analysis and bottleneck identification
\item Memory usage monitoring with peak tracking and garbage collection
\item Parallel efficiency analysis with worker utilization and speedup metrics
\item Optimization recommendations for memory, computation, and caching improvements
\end{itemize}
\end{enumerate}

\begin{outputbox}
\textbf{Enhanced Final Computational Output:}
\begin{align}
\dim [(Q\mathcal P_k)_d]^{G_k} &= D \\
\text{Explicit basis: } &\{g_1, g_2, \ldots, g_D\} \\
\text{where each } g_i &= \sum_{j} c_{ij} \cdot m_j, \quad m_j \in \mathcal{B}_d, \quad c_{ij} \in \mathbb{F}_2
\end{align}
\textbf{Enhanced Verification:} All $g_i$ satisfy $(\rho_j + I)g_i \equiv 0$ for $j = 1, \ldots, k$ with comprehensive certificate generation and performance analysis.
\end{outputbox}

\section*{Performance Analysis}

\begin{framed}
\noindent\textbf{Complexity Analysis:}
\begin{itemize}
    \item \textbf{Time Complexity:} The primary bottleneck is the admissible basis construction. The number of monomials of degree $d$ in $k$ variables grows polynomially, $N \approx \binom{d+k-1}{k-1}$. The number of Steenrod operations is significant. While worst-case complexity can be exponential, for practical degrees the performance is dominated by the size of the resulting sparse linear systems, often behaving as a high-degree polynomial in $N$.
    \item \textbf{Space Complexity:} Dominated by the storage of the hit matrix and the full reducer map. Using sparse representations, this is proportional to the number of non-zero entries, significantly better than the dense $O(N^2)$ requirement.
    \item \textbf{Parallel Efficiency:} The hit matrix construction, being the most computationally intensive phase, is embarrassingly parallel. Near-linear speedup is achieved by distributing Steenrod square computations across all available CPU cores.
\end{itemize}
\end{framed}

\subsection*{Performance Optimization Techniques}
\begin{enumerate}
    \item \textbf{Sparse Matrix Utilization:} All matrices (hit matrix, constraint systems, solver matrices) are implemented as sparse matrices, typically using a dictionary-of-keys format, e.g., $\{(\text{row}, \text{col}): \text{value}\}$. This dramatically reduces the memory footprint and accelerates linear algebra operations.

    \item \textbf{Advanced Multiprocessing:} CPU-intensive phases are parallelized using a worker pool architecture.
    \begin{itemize}
        \item \textbf{Task Distribution:} Steenrod operations are distributed as independent tasks using Python's \texttt{multiprocessing.Pool} with \texttt{imap\_unordered} for efficient, non-blocking result processing.
        \item \textbf{Chunk Size Optimization:} Task chunk sizes are dynamically calculated to balance overhead and workload, using the formula:
        $$ \text{chunk\_size} = \max\left(1, \frac{|\text{task\_list}|}{4 \cdot \text{num\_cores}}\right) $$
    \end{itemize}

    \item \textbf{Persistent Caching:} Fully computed admissible bases and reducer maps for a given $(k, d)$ are serialized using \texttt{pickle} and saved to disk. Subsequent runs load the cached results, eliminating the need for re-computation entirely.

    \item \textbf{Memory-Efficient Iteration:} By using iterators like \texttt{imap\_unordered}, results from parallel workers are processed as they become available, rather than being collected into a single large list in memory, which is crucial for large-scale computations.
\end{enumerate}

\section*{Theoretical Validation and Error Handling}

$\bullet$ \textbf{Consistency Checks:}
\begin{enumerate}
    \item \textbf{Dimensional Consistency:} The dimensions of computed invariant spaces are checked against known results from existing literature where available, providing a strong validation benchmark.
    \item \textbf{Action Verification:} The correctness of group action implementations is systematically verified. For example, a dedicated function, \texttt{verify\_sigma\_invariant\_global}, checks if a candidate polynomial is truly annihilated by all $(\rho_j + I)$ operators modulo hit elements.
    \item \textbf{Robust Edge Case Handling:} The implementation gracefully handles degenerate cases, such as trivial kernel spaces, empty components in weight-space decompositions, and inconsistent linear systems (via \texttt{try...except} blocks during solving).
\end{enumerate}

\section*{Implementation-Specific Technical Details}

\subsection*{Precise $\rho_j$ Group Action Implementation}
The generators of $GL_k(\mathbb{F}_2)$ are implemented precisely according to their mathematical definitions. The function distinguishes between the symmetric group generators $\rho_j$ for $j < k$ and the transvection generator $\rho_k$.

\begin{algorithmic}[1]
\Function{ApplyRho}{$p, j, k$}
    \State $\mathcal{P}_k, \text{gens} \leftarrow \text{p.parent}(), \text{p.parent}().\text{gens}()$ \Comment{Polynomial ring and its generators}
    \State $\text{sub\_dict} \leftarrow \{\}$
    \If{$j < k$} \Comment{Generators of the symmetric group $\Sigma_k$}
        \State $\text{sub\_dict} \leftarrow \{\text{gens}[j-1]: \text{gens}[j], \text{gens}[j]: \text{gens}[j-1]\}$
    \ElsIf{$j = k$} \Comment{The transvection generator $x_k \mapsto x_k + x_{k-1}$}
        \State $\text{sub\_dict} \leftarrow \{\text{gens}[k-1]: \text{gens}[k-1] + \text{gens}[k-2]\}$
    \EndIf
    \State \Return $p.\text{subs}(\text{sub\_dict})$
\EndFunction
\end{algorithmic}
This code directly implements the actions:
\begin{align}
\rho_j(x_i) &= \begin{cases} x_{j+1} & \text{if } i=j \\ x_j & \text{if } i=j+1 \\ x_i & \text{otherwise} \end{cases} \quad (\text{for } 1 \le j < k) \\
\rho_k(x_i) &= \begin{cases} x_k + x_{k-1} & \text{if } i=k \\ x_i & \text{if } i < k \end{cases}
\end{align}

\subsection*{Memoized Steenrod Square Implementation}
To avoid recomputing the Steenrod square of the same monomial, the algorithm uses a memoized (cached) recursive implementation based on the Cartan formula.

\begin{algorithmic}[1]
\Function{GetSqFunction}{$\mathcal{P}_k$}
    \State $\text{memo} \leftarrow \{\}$ \Comment{Memoization cache}
    \Function{SqRecursive}{$k, \text{mono}$}
        \State \text{state} $\leftarrow (k, \text{mono})$
        \If{$\text{state} \in \text{memo}$} \Return $\text{memo}[\text{state}]$ \EndIf
        \If{$k = 0$} \Return $\text{mono}$ \EndIf
        \If{$\text{mono}$ is constant} \Return $1$ if $k=0$ else $0$ \EndIf
        
        \State Find first variable $v$ in $\text{mono}$ with positive degree $e$
        \State $\text{rest} \leftarrow \text{mono} / v^e$
        \State $\text{total} \leftarrow 0$
        
        \For{$i = 0$ \textbf{to} $k$}
            \If{$\binom{e}{i} \bmod 2 = 1$}
                \State $\text{total} \leftarrow \text{total} + v^{e+i} \cdot \text{SqRecursive}(k-i, \text{rest})$
            \EndIf
        \EndFor
        \State $\text{memo}[\text{state}] \leftarrow \text{total}$
        \State \Return $\text{total}$
    \EndFunction
    \State \Return a function that applies \texttt{SqRecursive} to each monomial of a polynomial.
\EndFunction
\end{algorithmic}

\subsection*{Parallel Processing Architecture}
The system leverages a worker pool to parallelize the construction of the hit matrix, which is the most time-consuming step.
\begin{enumerate}[label=(\alph*)]
    \item \textbf{Process Initialization:} Before computation begins, each worker process in the pool is initialized with a copy of the necessary read-only data structures. This avoids costly data serialization for each individual task. Initialized data includes:
    \begin{itemize}
        \item The polynomial ring $\mathcal{P}_k$.
        \item The memoized Steenrod square function.
        \item The global mapping from monomial exponent tuples to their index.
    \end{itemize}

    \item \textbf{Task Queueing:} A list of all required Steenrod operations (pairs of \texttt{(k\_op, monomial\_tuple)}) is generated. This list is then fed to the worker pool.

    \item \textbf{Asynchronous Execution:} The \texttt{pool.imap\_unordered} method is used to distribute tasks. This approach enhances efficiency by allowing faster tasks to return results without waiting for slower ones, ensuring maximal CPU utilization.
\end{enumerate}

\section*{Example Computational Workflow: $k=4, d=33$}

\begin{stepbox}{Initialization Phase}
\begin{itemize}
\item Polynomial ring: $\mathbb Z/2[x_1, x_2, x_3, x_4, x_5]$
\item Target degree: $d = 33$
\item Generated monomials: 7140 total degree-33 monomials

\item \textbf{Basis of $(Q\mathcal P_{4}^0)_{33}$ is represented by 52 admissible monomials:}

\[
\begin{array}{llllll}
x_1 x_2 x_3^{31}         & x_1 x_2 x_4^{31}         & x_1 x_2^3 x_3^{29}       & x_1 x_2^3 x_4^{29}       & x_1 x_2^{31} x_3       & x_1 x_2^{31} x_4 \\
x_1 x_3 x_4^{31}         & x_1 x_3^3 x_4^{29}       & x_1 x_3^{31} x_4         & x_1^3 x_2 x_3^{29}       & x_1^3 x_2 x_4^{29}     & x_1^3 x_2^{29} x_3 \\
x_1^3 x_2^{29} x_4       & x_1^3 x_2^5 x_3^{25}     & x_1^3 x_2^5 x_4^{25}     & x_1^3 x_3 x_4^{29}       & x_1^3 x_3^5 x_4^{25}   & x_1^3 x_3^{29} x_4 \\
x_1^3 x_2^{15} x_3^{15}  & x_1^3 x_2^{15} x_4^{15}  & x_1^7 x_2^{11} x_3^{15}  & x_1^7 x_2^{11} x_4^{15}  & x_1^7 x_3^{11} x_4^{15} & x_1^7 x_3^{15} x_4^{11} \\
x_1^{15} x_2^3 x_3^{15}  & x_1^{15} x_2^3 x_4^{15}  & x_1^{15} x_2^7 x_3^{11}  & x_1^{15} x_2^7 x_4^{11}  & x_1^{15} x_2^{15} x_3^3 & x_1^{15} x_2^{15} x_4^3 \\
x_1^{15} x_3^3 x_4^{15}  & x_1^{15} x_3^7 x_4^{11}  & x_1^{15} x_3^{15} x_4^3  & x_2 x_3 x_4^{31}         & x_2 x_3^3 x_4^{29}     & x_2 x_3^{31} x_4 \\
x_2^3 x_3 x_4^{29}       & x_2^3 x_3^5 x_4^{25}     & x_2^3 x_3^{29} x_4       & x_2^7 x_3^{11} x_4^{15}  & x_2^7 x_3^{15} x_4^{11} & x_2^{15} x_3^{15} x_4^3 \\
x_2^{15} x_3^3 x_4^{15}  & x_2^{15} x_3^7 x_4^{11}  &                          &                          &                        & 
\end{array}
\]

\item \textbf{Basis of $(Q\mathcal P_{4}^+)_{33}$ is represented by 84 admissible monomials:}
\[
\begin{array}{llll}
x_1 x_2 x_3 x_4^{30}       & x_1^3 x_2 x_3 x_4^{28}       & x_1 x_2 x_3^2 x_4^{29}     & x_1^3 x_2 x_3^{14} x_4^{15} \\
x_1 x_2 x_3^3 x_4^{28}     & x_1^3 x_2 x_3^{15} x_4^{14}  & x_1 x_2 x_3^{30} x_4       & x_1^3 x_2 x_3^{28} x_4 \\
x_1 x_2^{14} x_3^{15} x_4^3 & x_1^3 x_2 x_3^4 x_4^{25}     & x_1 x_2^{14} x_3^3 x_4^{15} & x_1^3 x_2 x_3^5 x_4^{24} \\
x_1 x_2^{14} x_3^7 x_4^{11} & x_1^3 x_2^{13} x_3^{14} x_4^3 & x_1 x_2^{15} x_3^{14} x_4^3 & x_1^3 x_2^{13} x_3^{15} x_4^2 \\
x_1 x_2^{15} x_3^{15} x_4^2 & x_1^3 x_2^{13} x_3^2 x_4^{15} & x_1 x_2^{15} x_3^2 x_4^{15} & x_1^3 x_2^{13} x_3^3 x_4^{14} \\
x_1 x_2^{15} x_3^3 x_4^{14} & x_1^3 x_2^{13} x_3^6 x_4^{11} & x_1 x_2^{15} x_3^6 x_4^{11} & x_1^3 x_2^{13} x_3^7 x_4^{10} \\
x_1 x_2^{15} x_3^7 x_4^{10} & x_1^3 x_2^{15} x_3 x_4^{14}   & x_1 x_2^2 x_3 x_4^{29}      & x_1^3 x_2^{15} x_3^{13} x_4^2 \\
x_1 x_2^2 x_3^{15} x_4^{15} & x_1^3 x_2^{15} x_3^3 x_4^{12} & x_1 x_2^2 x_3^{29} x_4      & x_1^3 x_2^{15} x_3^5 x_4^{10} \\
x_1 x_2^2 x_3^5 x_4^{25}    & x_1^3 x_2^3 x_3^{12} x_4^{15} & x_1 x_2^3 x_3 x_4^{28}      & x_1^3 x_2^3 x_3^{13} x_4^{14} \\
x_1 x_2^3 x_3^{14} x_4^{15} & x_1^3 x_2^3 x_3^{15} x_4^{12} & x_1 x_2^3 x_3^{15} x_4^{14} & x_1^3 x_2^5 x_3 x_4^{24} \\
x_1 x_2^3 x_3^{28} x_4      & x_1^3 x_2^5 x_3^{10} x_4^{15} & x_1 x_2^3 x_3^4 x_4^{25}    & x_1^3 x_2^5 x_3^{11} x_4^{14} \\
x_1 x_2^3 x_3^5 x_4^{24}    & x_1^3 x_2^5 x_3^{14} x_4^{11} & x_1 x_2^{30} x_3 x_4        & x_1^3 x_2^5 x_3^{15} x_4^{10} \\
\end{array}
\]
\end{itemize}
\end{stepbox}

\newpage
\begin{stepbox}{Phase II Execution (Component Analysis)}
\begin{itemize}
\item Weight spaces: 2 distinct weight vectors: $\omega = (3,1,1,1,1)$ and $\omega = (3,3,2,2)$ 
\item Total components: 9 $\Sigma_3$-components identified
\item $\Sigma_3$-invariants found: 13 total across all weights

\item Weight $\omega = (3,1,1,1,1)$:
\begin{align*}
S_{\mathrm{inv},1} &= x_1 x_2 x_3 x_4^{30} + x_1 x_2^2 x_3 x_4^{29} + x_1 x_2^2 x_3^{29} x_4 + x_1 x_2^2 x_3^5 x_4^{25} \\
&\quad + x_1 x_2^3 x_3^{28} x_4 + x_1 x_2^3 x_3^4 x_4^{25} + x_1 x_2^3 x_3^5 x_4^{24} + x_1 x_2^{30} x_3 x_4 \\
&\quad + x_1^3 x_2 x_3^{28} x_4 + x_1^3 x_2 x_3^4 x_4^{25} + x_1^3 x_2 x_3^5 x_4^{24} + x_1^3 x_2^5 x_3 x_4^{24} \\
S_{\mathrm{inv},2} &= x_1 x_2 x_3^{31} + x_1 x_2 x_4^{31} + x_1 x_2^{31} x_3 + x_1 x_2^{31} x_4 + x_1 x_3 x_4^{31} \\
&\quad + x_1 x_3^{31} x_4 + x_1^{31} x_2 x_3 + x_1^{31} x_2 x_4 + x_1^{31} x_3 x_4 + x_2 x_3 x_4^{31} \\
&\quad + x_2 x_3^{31} x_4 + x_2^{31} x_3 x_4 \\
S_{\mathrm{inv},3} &= x_1 x_2^3 x_3^{29} + x_1 x_2^3 x_4^{29} + x_1 x_3^3 x_4^{29} + x_1^3 x_2 x_3^{29} + x_1^3 x_2 x_4^{29} \\
&\quad + x_1^3 x_2^{29} x_3 + x_1^3 x_2^{29} x_4 + x_1^3 x_3 x_4^{29} + x_1^3 x_3^{29} x_4 + x_2 x_3^3 x_4^{29} \\
&\quad + x_2^3 x_3 x_4^{29} + x_2^3 x_3^{29} x_4 \\
S_{\mathrm{inv},4} &= x_1^3 x_2^5 x_3^{25} + x_1^3 x_2^5 x_4^{25} + x_1^3 x_3^5 x_4^{25} + x_2^3 x_3^5 x_4^{25}.
\end{align*}

\item Weight $\omega = (3,3,2,2)$:
\begin{align*}
S_{\mathrm{inv},5} &= x_1 x_2^{14} x_3^{15} x_4^3 + x_1 x_2^{14} x_3^3 x_4^{15} + x_1 x_2^{15} x_3^{14} x_4^3 + x_1 x_2^{15} x_3^3 x_4^{14} \\
&\quad + x_1 x_2^3 x_3^{14} x_4^{15} + x_1 x_2^3 x_3^{15} x_4^{14} + x_1^{15} x_2 x_3^{14} x_4^3 + x_1^{15} x_2 x_3^3 x_4^{14} \\
&\quad + x_1^{15} x_2^3 x_3^5 x_4^{10} + x_1^3 x_2^{15} x_3^5 x_4^{10} + x_1^3 x_2^5 x_3^{10} x_4^{15} + x_1^3 x_2^5 x_3^{15} x_4^{10} \\
S_{\mathrm{inv},6} &= x_1 x_2^{14} x_3^{15} x_4^3 + x_1 x_2^{14} x_3^3 x_4^{15} + x_1 x_2^{15} x_3^{14} x_4^3 + x_1^{15} x_2 x_3^{14} x_4^3 \\
&\quad + x_1^{15} x_2^3 x_3 x_4^{14} + x_1^{15} x_2^3 x_3^{13} x_4^2 + x_1^{15} x_2^3 x_3^3 x_4^{12} + x_1^3 x_2 x_3^{14} x_4^{15} \\
&\quad + x_1^3 x_2 x_3^{15} x_4^{14} + x_1^3 x_2^{13} x_3^{15} x_4^2 + x_1^3 x_2^{13} x_3^2 x_4^{15} + x_1^3 x_2^{15} x_3 x_4^{14} \\
&\quad + x_1^3 x_2^{15} x_3^{13} x_4^2 + x_1^3 x_2^{15} x_3^3 x_4^{12} + x_1^3 x_2^3 x_3^{12} x_4^{15} + x_1^3 x_2^3 x_3^{15} x_4^{12} \\
S_{\mathrm{inv},7} &= x_1 x_2^{15} x_3^3 x_4^{14} + x_1 x_2^{15} x_3^6 x_4^{11} + x_1 x_2^{15} x_3^7 x_4^{10} + x_1 x_2^3 x_3^{14} x_4^{15} \\
&\quad + x_1 x_2^3 x_3^{15} x_4^{14} + x_1 x_2^6 x_3^{11} x_4^{15} + x_1 x_2^6 x_3^{15} x_4^{11} + x_1 x_2^7 x_3^{10} x_4^{15} \\
&\quad + x_1 x_2^7 x_3^{15} x_4^{10} + x_1^{15} x_2 x_3^3 x_4^{14} + x_1^{15} x_2 x_3^6 x_4^{11} + x_1^{15} x_2 x_3^7 x_4^{10} \\
&\quad + x_1^{15} x_2^3 x_3^{13} x_4^2 + x_1^{15} x_2^3 x_3^3 x_4^{12} + x_1^{15} x_2^7 x_3 x_4^{10} + x_1^3 x_2^{13} x_3^{15} x_4^2 \\
&\quad + x_1^3 x_2^{13} x_3^2 x_4^{15} + x_1^3 x_2^{15} x_3^{13} x_4^2 + x_1^3 x_2^{15} x_3^3 x_4^{12} + x_1^3 x_2^3 x_3^{12} x_4^{15} \\
&\quad + x_1^3 x_2^3 x_3^{15} x_4^{12} + x_1^7 x_2 x_3^{10} x_4^{15} + x_1^7 x_2 x_3^{15} x_4^{10} + x_1^7 x_2^{15} x_3 x_4^{10} \\
S_{\mathrm{inv},8} &= x_1^3 x_2^{13} x_3^{14} x_4^3 + x_1^3 x_2^{13} x_3^3 x_4^{14} + x_1^3 x_2^3 x_3^{13} x_4^{14} \\
S_{\mathrm{inv},9} &= x_1 x_2^7 x_3^{11} x_4^{14} + x_1^3 x_2^3 x_3^{13} x_4^{14} + x_1^3 x_2^7 x_3^{11} x_4^{12} + x_1^7 x_2 x_3^{11} x_4^{14} \\
&\quad + x_1^7 x_2^{11} x_3 x_4^{14} + x_1^7 x_2^{11} x_3^{13} x_4^2 + x_1^7 x_2^3 x_3^{11} x_4^{12} + x_1^7 x_2^7 x_3^9 x_4^{10} \\
\end{align*}
\end{itemize}
\end{stepbox}

\newpage
\begin{stepbox}{Phase II Execution (Component Analysis)}
\begin{itemize}
\item Weight $\omega = (3,3,2,2)$:
\begin{align*}
S_{\mathrm{inv},10} &= x_1 x_2^7 x_3^{14} x_4^{11} + x_1^3 x_2^3 x_3^{13} x_4^{14} + x_1^3 x_2^5 x_3^{11} x_4^{14} + x_1^3 x_2^5 x_3^{14} x_4^{11} \\
&\quad + x_1^3 x_2^7 x_3^{11} x_4^{12} + x_1^7 x_2 x_3^{14} x_4^{11} + x_1^7 x_2^{11} x_3 x_4^{14} + x_1^7 x_2^{11} x_3^{13} x_4^2 \\
&\quad + x_1^7 x_2^3 x_3^{11} x_4^{12} + x_1^7 x_2^7 x_3^{11} x_4^8 + x_1^7 x_2^7 x_3^8 x_4^{11} \\
S_{\mathrm{inv},11} &= x_1 x_2^{15} x_3^{15} x_4^2 + x_1 x_2^{15} x_3^2 x_4^{15} + x_1 x_2^2 x_3^{15} x_4^{15} + x_1^{15} x_2 x_3^{15} x_4^2 \\
&\quad + x_1^{15} x_2 x_3^2 x_4^{15} + x_1^{15} x_2^{15} x_3 x_4^2 \\
S_{\mathrm{inv},12} &= x_1^{15} x_2^{15} x_3^3 + x_1^{15} x_2^{15} x_4^3 + x_1^{15} x_2^3 x_3^{15} + x_1^{15} x_2^3 x_4^{15} \\
&\quad + x_1^{15} x_3^{15} x_4^3 + x_1^{15} x_3^3 x_4^{15} + x_1^3 x_2^{15} x_3^{15} + x_1^3 x_2^{15} x_4^{15} \\
&\quad + x_1^3 x_3^{15} x_4^{15} + x_2^{15} x_3^{15} x_4^3 + x_2^{15} x_3^3 x_4^{15} + x_2^3 x_3^{15} x_4^{15} \\
S_{\mathrm{inv},13} &= x_1^{15} x_2^7 x_3^{11} + x_1^{15} x_2^7 x_4^{11} + x_1^{15} x_3^7 x_4^{11} + x_1^7 x_2^{11} x_3^{15} \\
&\quad + x_1^7 x_2^{11} x_4^{15} + x_1^7 x_2^{15} x_3^{11} + x_1^7 x_2^{15} x_4^{11} + x_1^7 x_3^{11} x_4^{15} \\
&\quad + x_1^7 x_3^{15} x_4^{11} + x_2^{15} x_3^7 x_4^{11} + x_2^7 x_3^{11} x_4^{15} + x_2^7 x_3^{15} x_4^{11}
\end{align*}
\end{itemize}
\end{stepbox}

\begin{stepbox}{Phase III Execution (Weight-wise $G_4$)}
\begin{itemize}

\item Dimension of weight-wise  $[Q\mathcal P_4(\omega = (3,1,1,1,1))]^{G_4}: 0$

\item Dimension of weight-wise  $[Q\mathcal P_4(\omega = (3,3,2,2))]^{G_4}: 1$

\item The basis for  \( [Q\mathcal P_4(\omega = (3,3,2,2))]^{G_4} \) is represented by the following invariant polynomial:

\begin{align*}
GL_{\mathrm{inv},1} &= x_1 x_2^7 x_3^{11} x_4^{14} + x_1 x_2^7 x_3^{14} x_4^{11} + x_1^3 x_2^5 x_3^{11} x_4^{14} + x_1^3 x_2^5 x_3^{14} x_4^{11} \\
&\quad + x_1^7 x_2 x_3^{11} x_4^{14} + x_1^7 x_2 x_3^{14} x_4^{11} + x_1^7 x_2^7 x_3^{11} x_4^8 + x_1^7 x_2^7 x_3^8 x_4^{11} \\
&\quad + x_1^7 x_2^7 x_3^9 x_4^{10}
\end{align*}

\item CASE DETECTION RESULT:
\item Case Type: CASE 2
$-$ Primary $G_k$-invariant from larger weight $\omega = (3, 3, 2, 2)$

$-$ Minimal weight space has no $G_k$-invariants

$-$ Correction polynomial $h'$ will be computed from smaller weight spaces

\end{itemize}
\end{stepbox}

\begin{stepbox}{Phase IV Execution (Global Construction)}
\begin{itemize}
\item Correction polynomial: 
\begin{align*}
h =G_{\mathrm{inv},1}= &x_1 x_2^7 x_3^{11} x_4^{14} + x_1 x_2^7 x_3^{14} x_4^{11} + x_1^3 x_2^5 x_3^{11} x_4^{14} + x_1^3 x_2^5 x_3^{14} x_4^{11} \\
&+ x_1^7 x_2 x_3^{11} x_4^{14} + x_1^7 x_2 x_3^{14} x_4^{11} + x_1^7 x_2^7 x_3^{11} x_4^8 \\
&+ x_1^7 x_2^7 x_3^8 x_4^{11} + x_1^7 x_2^7 x_3^9 x_4^{10}
\end{align*}

\item Correction polynomial $h'$ found:
 \begin{align*}
h' = &x_1 x_2 x_3^3 x_4^{28} + x_1 x_2^2 x_3 x_4^{29} + x_1 x_2^2 x_3^{29} x_4 + x_1 x_2^2 x_3^5 x_4^{25} \\
&+ x_1 x_2^3 x_3 x_4^{28} + x_1 x_2^3 x_3^{28} x_4 + x_1 x_2^3 x_3^5 x_4^{24} + x_1 x_2^{30} x_3 x_4 \\
&+ x_1^3 x_2 x_3 x_4^{28} + x_1^3 x_2 x_3^{28} x_4 + x_1^3 x_2 x_3^5 x_4^{24}
\end{align*}

\item Global $\Sigma_4$-invariant basis constructed:
  - Primary: $h + h'$ (from weight $(3, 3, 2, 2)$ + corrections)
  - Homogeneous: 4 $\Sigma_4$-invariants from smaller weights
  - Total dimension: 5
\end{itemize}
\end{stepbox}

\begin{outputbox}
\textbf{Final Result for $k=4, d=33$:}
$\dim (Q\mathcal P_4)_{33}^{GL_4} = 1$
\textbf{Explicit Basis:}
\begin{align*}
G_4\text{-Invariant}_1 =\ &[x_1 x_2 x_3 x_4^{30} + x_1 x_2 x_3^3 x_4^{28} + x_1 x_2^3 x_3 x_4^{28} + x_1 x_2^3 x_3^4 x_4^{25} \\
&+ x_1 x_2^7 x_3^{11} x_4^{14} + x_1 x_2^7 x_3^{14} x_4^{11} + x_1^3 x_2 x_3 x_4^{28} + x_1^3 x_2 x_3^4 x_4^{25} \\
&+ x_1^3 x_2^5 x_3 x_4^{24} + x_1^3 x_2^5 x_3^{11} x_4^{14} + x_1^3 x_2^5 x_3^{14} x_4^{11} \\
&+ x_1^7 x_2 x_3^{11} x_4^{14} + x_1^7 x_2 x_3^{14} x_4^{11} + x_1^7 x_2^7 x_3^{11} x_4^8 \\
&+ x_1^7 x_2^7 x_3^8 x_4^{11} + x_1^7 x_2^7 x_3^9 x_4^{10}]
\end{align*}

\end{outputbox}

\medskip

The following is \textbf{the explicit output produced by our algorithm}, which \textbf{provides a fully detailed solution} in a style similar to that of the manual computations familiar to readers from previous works, including our own contributions in~\cite{Phuc1, Phuc2, Phuc3}. The algorithm also produces an explicit basis for $(Q\mathcal{P}_4)_{33}$, enabling readers to easily verify the results previously presented in~\cite{Sum0}.

\begin{landscape}
{\scriptsize

}
\end{landscape}

\section{Conclusion}

This paper has presented a systematic, computational approach to two fundamental problems related to the Singer algebraic transfer. 

First, we have developed an algorithmic method for the preimage problem within the framework of the Lambda algebra. By formulating the question of whether a class $[y] \in \text{Ext}^{k, *}_{\mathcal{A}}(\mathbb Z/2, \mathbb Z/2)$ is in the image of the transfer as a solvable system of linear equations, $\varphi_k(x) + \delta(z) = y$, we have created a direct path for investigation. The efficacy of this method was demonstrated through concrete applications: we  have shown that a proof by Nguyen Sum concerning the indecomposable element $d_0\in \text{Ext}^{4, 18}_{\mathcal{A}}(\mathbb Z/2, \mathbb Z/2)$ is incorrect, and we have provided the explicit construction of a preimage for the element $p_0\in \text{Ext}^{4, 37}_{\mathcal{A}}(\mathbb Z/2, \mathbb Z/2)$, which was previously only known non-constructively.

Second, we have introduced a comprehensive algorithmic framework, implemented in \textsc{SageMath}, for computing the dimension and an explicit basis for the space of $G_k$-invariants, $[(Q\mathcal{P}_k)_d]^{G_k}$. This space is dual to the domain of the Singer transfer and is central to resolving the Singer Conjecture. This algorithmic approach provides a necessary tool to move beyond the limitations and potential inaccuracies of the large-scale manual computations that have characterized past work in this area, ensuring that results are verifiable and reproducible. The detailed appendix, including the full code and a sample workflow, serves as a testament to this principle.

In closing, we wish to emphasize that this work stems from a multi-year research project aimed at a specific goal: to automate the complex computations required to study the Singer transfer's dual domain. These calculations have traditionally been performed by hand, a process that is not only laborious but also prone to error and difficult to independently verify. The comprehensive algorithm presented herein represents a significant first step in this direction. We acknowledge that, although the algorithm in the present work has already been optimized to the best of our current ability, further improvements are necessary to enhance its performance and to achieve the goal of producing explicit results in a more general form. This remains an active area of our research, and we welcome constructive feedback and suggestions that will be valuable for the continued development of this project.

\section{Appendix}\label{pl}

This appendix provides the full computational workflow of our algorithm described in Section~\ref{s4}, including a complete sample run for the case $k = 4$, $d = 33$, to ensure that all results are fully verifiable and reproducible. Readers are encouraged to independently verify the algorithm's correctness by substituting any pair (k,d) into the \texttt{MAIN EXECUTION} section of the provided \textsc{SageMath} code. Successful execution is contingent upon the available computational resources, particularly memory.

\medskip

We also note for the reader that the algorithm presented here is a fully expanded version, which integrates multiple computational processes and produces detailed outputs resembling step-by-step manual calculations. However, we also maintain a streamlined version of the algorithm, which offers significantly faster runtime and more efficient result presentation. This optimized version is available upon request.

\medskip



\medskip

\begin{acknowledgment}

We would like to express our sincere gratitude to Professor Geoffrey Powell for his constructive feedback on our previous paper~\cite{Phuc2} as well as on the present algorithmic work. We believe that verification via computer-based algorithms is far more reliable than lengthy manual computations, which are prone to error and difficult to validate. For this reason, we have decided to make the entire algorithm publicly available as presented in this paper. The algorithm integrates the resolution of the hit problem and the computation of invariants, and it produces fully explicit outputs that closely resemble traditional manual calculations.

We are also grateful to Professor Dan Isaksen for his encouragement throughout the development of our algorithms, including those presented in~\cite{Phuc}.

\end{acknowledgment}

\end{document}